
\documentclass[a4paper,12pt, reqno]{amsart}
\usepackage{amsmath,amsfonts,amssymb,amsbsy,amsthm,epsfig,color,mathrsfs}
\usepackage{hyperref}
\setcounter{tocdepth}{1}

\newcommand{\ignore}[1]{}

\newcommand{\SL}{\operatorname{SL}}
\newcommand{\Sp}{\operatorname{Sp}}
\newcommand{\SU}{\operatorname{SU}}
\newcommand{\GL}{\operatorname{GL}}
\newcommand{\PGL}{\operatorname{PGL}}
\newcommand{\PSL}{\operatorname{PSL}}

\newcommand{\SO}{\operatorname{SO}}
\newcommand{\bb}{\mathbb}
\newcommand{\Matn}{\operatorname{Mat}}
\newcommand{\C}{\bb C} 

\newcommand{\Z}{\bb Z}

\newcommand{\R}{\bb R}
\newcommand{\N}{\bb N}

\newcommand{\Q}{\mathbb Q}

\newcommand{\cI}{\mathcal{I}}
\newcommand{\cJ}{\mathcal{J}}
\newcommand{\cU}{\mathcal{U}}
\newcommand{\cH}{\mathcal{H}}

\newcommand{\vol}{\operatorname{vol}}

\newcommand\norm[1]{\left\|#1\right\|}

\newcommand\abs[1]{\left|#1\right|}

\newcommand\inn[1]{\left\langle #1 \right\rangle}
\newcommand\set[1]{\left\{{#1}\right\}}


\def\rsemi{\hbox{$\triangleright\!\!\!<$}}

\newtheorem{Theorem}{Theorem}
\newtheorem{Cor}[Theorem]{Corollary}
\newtheorem{Prop}[Theorem]{Proposition}

\newtheorem*{lemma*}{Lemma}

\newtheorem{remark}[Theorem]{Remark}
\newtheorem*{theorem*}{Theorem}
\newtheorem{Def}[Theorem]{Definition}
\numberwithin{equation}{section}
\numberwithin{Theorem}{section}

\begin{document}
\title[Best rates of Diophantine approximation]{Best possible rates of distribution\\  of dense lattice orbits  \\ in homogeneous spaces}
\author{Anish Ghosh, Alexander Gorodnik, and Amos Nevo} 
\address{School of Mathematics, Tata Institute of Fundamental Research, Mumbai, India }
\email{ghosh@math.tifr.res.in}
\address{School of Mathematics, University of Bristol, Bristol UK }
\email{a.gorodnik@bristol.ac.uk}
\address{Department of Mathematics, Technion IIT, Israel}
\email{anevo@tx.technion.ac.il}

\date{\today}
\subjclass[2000]{37A17, 11K60}
\keywords{Diophantine approximation, semisimple algebraic group, homogeneous space, lattice subgroup, automorphic spectrum.}
\thanks{The second author acknowledges
  support of ERC. The third author acknowledges support of ISF}

\begin{abstract}
The present paper establishes upper and lower bounds on the speed of approximation in a wide range of natural Diophantine approximation problems. The upper and lower bounds coincide in many cases, giving rise to optimal results in Diophantine approximation which were inaccessible previously. Our approach proceeds by establishing, more generally, upper and lower bounds for the rate of distribution of dense  orbits  of a lattice subgroup $\Gamma $ in a connected Lie (or algebraic) group  $G$, acting on suitable homogeneous spaces $G/H$. The upper bound is derived using a quantitative duality principle for homogeneous spaces, reducing it to a rate of convergence in the mean ergodic theorem for a family of averaging operators supported on $H$ and acting on $G/\Gamma$. In particular, the quality of the upper bound on the rate of distribution we obtain is determined explicitly by the spectrum of $H$ in the automorphic representation on $L^2(\Gamma\setminus G)$.  We show that the rate is best possible when the representation in question is tempered, and show that the latter condition  holds in a wide range of examples.
\end{abstract}

\maketitle

{\small
\tableofcontents
}

\section{Best possible Diophantine exponents}\label{sec:examples}

Our purpose in the present paper is to consider dense orbits of a lattice subgroup $\Gamma\subset G$ acting on a homogeneous space $G/H$, and to give explicit quantitative upper and lower bounds on the rate of approximation of a general point $x_0\in G/H$ by a generic orbit $\Gamma\cdot x\subset G/H$. We will address this problem in considerable generality below and give diverse examples for the approximation exponent under study, but before delving into the general theory, let us point out the following remarkable feature that emerges from  our analysis. It is possible to give an explicit spectral condition which can often be easily verified, and which implies that the upper and lower bounds on the rate of approximation coincide, yielding the best possible result. This gives rise to the solution of an array of  natural problems in classical Diophantine  approximation in real, complex and $p$-adic vector spaces. In order to demonstrate this phenomenon we  begin by presenting some concrete examples where the optimal approximation Diophantine exponent can be obtained.

 Let $F$ denote the fields $\R$, $\C$ or $\Q_p$. Let $G$ be a linear algebraic subgroup
of the group $\SL_n(F)\rsemi F^n$ considered as a group of affine transformations of $F^n$.
We fix a norm on $\R^n$ and $\C^n$, and a (vector space) norm on $\Matn_{n+1}(\R)$ and $\Matn_{n+1}(\C)$. In the local field case we take the standard  valuation on the field, and the standard maximum norm on the linear space $F^n$, and on $\Matn_{n+1}(F)$.  
We view the affine group $\SL_{n}(F)\rsemi F^{n}$, $ n \ge 2$  as a subgroup of $\SL_{n+1}(F)$, specifically as  the stability group of the standard basis vector $e_{n+1}$,  and consider norms on it by restriction from $\SL_{n+1}(F)\subset \Matn_{n+1}(F)$. 
Let $X\subset F^n$ be an affine subvariety which is invariant and homogeneous under the $G$-action,
so that $X\simeq G/H$ where $H$ is closed subgroup of $G$.
We define the distance on $X$ by restricting the norm defined on $F^n$. Let $\Gamma$ be a lattice
subgroup of $G$ which acts ergodically on $G/H$,  so that almost every  $\Gamma$-orbit is dense in $X$.

\begin{Def}\label{def:DE} 
{\rm
Assume that  for $x,x_0\in X$ there exist an exponent $\zeta < \infty$
and a constant $\epsilon_0=\epsilon_0(x,x_0,\zeta) > 0$ such that for all $\epsilon \in (0, \epsilon_0)$, 
the system of inequalities 
$$
\|\gamma^{-1} x- x_0\|\le \epsilon\quad\text{and}\quad\|\gamma\|\le  \epsilon^{-\zeta}.
$$ 
has a solution $\gamma\in \Gamma$.
We define the Diophantine
approximation  exponent $\kappa_\Gamma(x,x_0)$ as the infimum of $\zeta > 0$ such that the foregoing inequalities have 
solutions as stated.
}
\end{Def} 

\begin{enumerate}

\item The exponent above is an analogue for lattice orbits of the uniform approximation studied by Bugeaud and Laurent \cite{BL, BL1, BL-survey} and also generalizes the Diophantine exponent for uniform approximation by $\SL_{2}(\bb Z)$-orbits in $\bb R^2$ considered by Laurent and Nogueira \cite{LN1}. 
\item We remark that the existence of some finite $\zeta$ for which the foregoing Diophantine equation has solutions for arbitrary small $\epsilon$ is a highly non-trivial condition, and is far from obvious in general. It means that $x_0$ has approximations by elements $\gamma x$ in the orbit of $x$ whose norm is bounded by a power of $\epsilon^{-1}$, but a priori the set where this condition is satisfied (for any finite $\zeta$) may be very small. 
\item One can also consider the approximation problem for the system of inequalities
$$
\|\gamma x- x_0\|\le \epsilon\quad\text{and}\quad\|\gamma\|\le  \epsilon^{-\zeta}.
$$ 
Our methods can be used to provide best possible exponents in this case as well. 
\end{enumerate}

Since all norms on $F^n$ and on $\Matn_{n+1}(F)$ are equivalent, it is evident that any choice of norm 
leads to the same exponent.

Notice also that it is evident that the function $\kappa_\Gamma(x,x_0)$ is $\Gamma\times\Gamma$-invariant, and hence when $\Gamma$ acts ergodically on $G/H$, it is almost surely a fixed constant, which we denote by $\kappa_\Gamma (G/H)=k_\Gamma(X)$.

We now proceed to describe some natural examples of classical Diophantine approximation problems 
where the best possible exponent $\kappa_\Gamma(G/H)$ can be computed. The results stated below will all be shown in due course to follow from the general results that will be developed later on, see \S \ref{sec:proofs}.

\subsection{Inhomogeneous Diophantine approximation in the real and the complex plane}

Consider the affine action of the group $\Gamma=\SL_2(\Z)\rsemi \Z^2$  on the real plane $\R^2$.

\begin{Cor}\label{ex:1}
 The approximation exponent for the $\Gamma$-orbits in the plane 
 is given by  $\kappa_{\Gamma}(\R^2)=1$. 
\end{Cor}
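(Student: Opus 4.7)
The plan is to realize the real affine plane as the homogeneous space $X = G/H$, where $G = \SL_2(\R) \rsemi \R^2$, $H = \SL_2(\R)$ is the stabilizer of the origin in $\R^2$, and $\Gamma = \SL_2(\Z) \rsemi \Z^2$ is a lattice in $G$ acting ergodically on $X$. The corollary will then follow by combining the general upper bound (from the quantitative duality principle) and the general lower bound (from a volume count) established in Section \ref{sec:proofs}, together with the verification of the required spectral input in this particular example.

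For the upper bound $\kappa_\Gamma(\R^2) \le 1$, I invoke the quantitative duality principle to reduce the Diophantine problem to a rate of convergence in the mean ergodic theorem for the action of $H = \SL_2(\R)$ on the automorphic space $\Gamma \setminus G$. The rate is dictated by the $H$-spectrum on $L^2(\Gamma \setminus G)$, which I analyze by Fourier decomposition along the compact torus fibres $\R^2/\Z^2$ of the natural $\SL_2$-equivariant projection $\Gamma \setminus G \to \SL_2(\Z) \setminus \SL_2(\R)$. The zero Fourier mode contributes $L^2(\SL_2(\Z) \setminus \SL_2(\R))$, whose orthogonal complement of constants is tempered. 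Each non-zero Fourier mode is grouped by its $\SL_2(\Z)$-orbit on $\Z^2$; the stabilizer in $\SL_2(\Z)$ of any primitive character of $\R^2/\Z^2$ is a unipotent subgroup, and the resulting $\SL_2(\R)$-representation is unitarily equivalent to a representation induced from a unipotent subgroup, which lies in the principal (tempered) series. Thus the $H$-representation on the orthogonal complement of the constants in $L^2(\Gamma \setminus G)$ is tempered, and the general upper bound theorem then yields the optimal exponent $\zeta = 1$.

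For the lower bound $\kappa_\Gamma(\R^2) \ge 1$, I apply the counting/volume estimate developed in the paper: the Haar measure of the preimage in $G$ of an $\epsilon$-ball around $x_0 \in X$, intersected with $\{g \in G : \|g\| \le T\}$, is of order $\epsilon^2 \cdot T^2$ (the $\epsilon^2$ coming from the volume of the target ball in $X = \R^2$, and the $T^2$ from the Haar volume of the corresponding $H$-fibre). The existence of a lattice element $\gamma \in \Gamma$ in this set forces $\epsilon^2 T^2 \gtrsim 1$, i.e., $T \gtrsim \epsilon^{-1}$, and hence $\zeta \ge 1$. The main technical obstacle is the temperedness verification underlying the upper bound: one must identify the $\SL_2(\R)$-representation on each non-zero Fourier mode of $L^2(\Gamma \setminus G)$ with a tempered representation, which proceeds via the Mackey analysis of the semidirect product $G = \SL_2(\R) \rsemi \R^2$ outlined above, exploiting the fact that the stabilizer in $\SL_2(\Z)$ of any non-zero character of $\R^2/\Z^2$ is unipotent and hence amenable. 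With temperedness in place, the upper and lower bounds coincide at $\zeta = 1$, yielding the claimed value $\kappa_\Gamma(\R^2) = 1$.
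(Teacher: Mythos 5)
Your proposal is correct and follows the same overall framework as the paper: realize $\R^2=G/H$ with $G=\SL_2(\R)\rsemi\R^2$, $H=\SL_2(\R)$, $\Gamma=\SL_2(\Z)\rsemi\Z^2$, get the upper bound from the duality principle and the quantitative mean ergodic theorem (with $d=2$, $a=2$, $\theta=1/2$), the lower bound from the counting argument of Theorem \ref{thm:lower}, and conclude $\kappa=d/a=1$ via Theorem \ref{thm:optimal}. The one place where you genuinely diverge is the verification of temperedness of $H$ on the complement of the constants. The paper splits $L^2(\Gamma\setminus G)=\cH\oplus\cH^\perp$ with $\cH$ the lift of $L^2(\SL_2(\Z)\setminus\SL_2(\R))$, quotes the classical temperedness of $L^2_0$ of the modular surface for $\cH$, and for $\cH^\perp$ simply observes that $\R^2$ has no invariant vectors there and cites Kazhdan's restriction theorem (Theorem \ref{thm:sl2 restriction}/\ref{thm:sl2}). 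You instead unwind that step concretely: Fourier decomposition along the torus fibres, grouping of the non-zero frequencies into $\SL_2(\Z)$-orbits, identification of each orbit piece as induced from the (unipotent, hence amenable) stabilizer, and temperedness by weak containment of the trivial representation of an amenable subgroup in its regular representation. This is precisely the Mackey-machine proof of Kazhdan's theorem specialized to this example, so the two arguments are equivalent in substance; yours is more self-contained, the paper's is shorter because the general theorem is stated and used elsewhere. Two small imprecisions worth fixing: a representation induced from $\Z\subset N$ need not consist only of principal series (temperedness is what amenability of the inducing subgroup gives you, and that is all you need); and the lower-bound implication ``a lattice point exists, hence $\epsilon^2T^2\gtrsim 1$'' is false for an individual target $x_0$ --- it must be run as a pigeonhole over a positive-measure set of targets, exactly as in Theorem \ref{thm:lower}, which you do invoke.
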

More concretely, the problem of  simultaneous inhomogeneous integral Diophantine approximation in the
plane admits the following solution. For any $\eta >0$, for almost every $x_0=(u_0,v_0)$, for almost
every $x=(u,v)$, and for every $\epsilon$ sufficiently small, there are integers $a,b,c,d, m, n$ with
$$
\norm{(au+bv+m, cu+dv+n)-(u_0,v_0)}\le \epsilon
$$ 
such that 
$$
ad-bc=1\quad\hbox{and}\quad 
\max\set{|a|,|b|,|c|,|d|,|m|,|n|} \le  \epsilon^{-1-\eta}.
$$ 

Taking the latter equation modulo integers, we immediately deduce that the rate of approximation by
generic orbits of $\SL_2(\Z)$ in its action on $\mathbb{T}^2=\R^2/\Z^2$ by group automorphisms satisfies
the following. For any $\eta>0$, for almost every $\bar{x}_0\in \mathbb{T}^2$ and for almost every $\bar{x}\in
\mathbb{T}^2$, the equation $\norm{\gamma \bar{x}-\bar{x}_0^\prime}\le  \norm{\gamma}^{-1+\eta} $ has
infinitely many solutions $\gamma\in \SL_2(\Z)$.

Let us note the following :
 \begin{enumerate}
\item The only results we are aware of in the literature regarding estimates of the Diophantine exponent for lattice
  actions on non-compact homogeneous varieties are due to  Laurent--Nogueira \cite{LN1,LN2}. They established that generically $\kappa_\Gamma(x,x_0) \le 
  3$ for $\SL_2(\Z)$ acting {\it linearly} on the plane by explicitly constructing a sequence of approximants using 
a suitable continued fractions algorithm. For the linear action on the real plane, Maucourant and Weiss \cite{MW} have also established an (explicit, but not as sharp) upper bound for $\kappa_\Gamma(x,x_0)$ for arbitrary lattice subgroups of $\SL_2(\R)$ using results on the effective equidistribution of horocycle flows. Diophantine exponents for cocompact lattices acting on the complex plane can be derived from the effective equidistribution theorem of Pollicott \cite{Pollicott}. 
\item We note that for the linear action, determining  the exact value of $\kappa_\Gamma(x,x_0)$ generically remains an open problem, for any lattice subgroup, over any field,  in any dimension. For explicit general upper and lower bounds we refer to \cite{GGN3}. 
\item For the sphere $S^2$ viewed as a compact homogeneous space of $\SO(3,\R)$, an exponent of Diophantine approximation for a suitable lattice in the unit group of a quaternion algebra is a consequence of the celebrated Lubotzky-Phillips-Sarnak construction \cite{lps1}\cite{lps2}. 
Similarly, Diophantine exponents for actions of specially constructed lattices on odd dimensional spheres can be deduced from the sharp spectral estimates due to Clozel \cite{clozel}.  The work of Oh  \cite{heeoh} on spectral estimates for homogeneous spaces of simple compact Lie group can also be used to establish some Diophantine exponent for certain lattices. 
 \end{enumerate}

Let us now consider the problem of simultaneous inhomogeneous Diophantine approximation by pairs of Gaussian integers in $\C^2$, Eisenstein integers, or more generally pairs of algebraic integers in an imaginary quadratic number field. The group $\Gamma=\SL_2(\Z[i])\rsemi \Z[i]^2$ acts ergodically on $\C^2$, and the same holds for 
$\Gamma=\SL_2(\cI)\rsemi \cI^2$, where $\cI$ is the ring of integers of the imaginary quadratic fields $\Q[\sqrt{-D}])$, $D\ge 2$ a positive square free integer. We can now state the following result, whose proof will be given in \S \ref{sec:proofs} below.

\begin{Cor}\label{ex:2}
 Let $\cI$ be the ring of integers in the imaginary quadratic fields $\Q[\sqrt{-1}])$, $\Q[\sqrt{-2}])$,
 $\Q[\sqrt{-3}])$, or $\Q[\sqrt{-7}])$ and $\Gamma=\SL_2(\cI)\rsemi \cI^2$.
Then the approximation exponent for the $\Gamma$-orbits in $\C^2$ is given by  $\kappa_\Gamma(\C^2)= 1 $. 

\end{Cor}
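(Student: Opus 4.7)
The plan is to apply the general upper/lower bound machinery of the paper with $G = \SL_2(\C)\rsemi\C^2$, viewed as the stabilizer of $e_3$ in $\SL_3(\C)$, acting affinely on $X = \C^2 \simeq G/H$ with basepoint the origin and stabilizer $H = \SL_2(\C)$. The upper bound $\kappa_\Gamma(\C^2)\le 1$ will follow from the quantitative duality principle, which reduces matters to a rate of convergence in the mean ergodic theorem for $H$-averages on $L^2(\Gamma\backslash G)$; the matching lower bound $\kappa_\Gamma(\C^2)\ge 1$ will follow from the paper's general lower-bound principle once the corresponding $H$-representation is shown to be tempered. Heuristically the lower bound reflects the volume balance between the $T^4$-growth of $\SL_2(\C)$-balls and the $\epsilon^4$-volume of target balls in $X\simeq\R^4$, forcing $\zeta\ge 1$.

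The heart of the argument is therefore to establish temperedness of the $\SL_2(\C)$-representation on $L^2(\Gamma\backslash G)$. For this I would exploit the torus fibration $\Gamma\backslash G \to \SL_2(\cI)\backslash \SL_2(\C)$ whose fiber is the real $4$-torus $\cI^2\backslash\C^2$, and Fourier-decompose $L^2(\Gamma\backslash G)$ along this fiber. Grouping nonzero characters of $\cI^2\backslash\C^2$ into $\SL_2(\cI)$-orbits, each orbit contributes a summand of the form $L^2(\Gamma_0\backslash\SL_2(\C))$ (with an appropriate twist) where $\Gamma_0$ is the integral points of the $\SL_2(\C)$-stabilizer of the relevant character. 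Since that stabilizer is conjugate to the unipotent upper-triangular subgroup $U\cong\C$, each such piece is unitarily induced from a noncompact amenable subgroup, hence tempered (weakly contained in the regular representation of $\SL_2(\C)$) by standard induction-of-tempered principles.

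The main obstacle is the constant Fourier mode, which contributes $L^2(\SL_2(\cI)\backslash\SL_2(\C))$ with $\SL_2(\C)$ acting by right translation. Its temperedness is essentially a Ramanujan-type assertion for the automorphic spectrum of $\SL_2$ over the imaginary quadratic field $\Q(\sqrt{-D})$: one must rule out residual and complementary-series components and bound the cuspidal spectrum. This is where the restriction to $D\in\{1,2,3,7\}$ enters the picture, via specific arithmetic input (explicit Eisenstein series descriptions and/or transfers to unit groups of suitable quaternion algebras) yielding the required temperedness for these particular rings of integers. Once full temperedness of $L^2(\Gamma\backslash G)$ is in hand, the general upper and lower bounds of the paper coincide and yield $\kappa_\Gamma(\C^2)=1$.
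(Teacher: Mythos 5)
Your proposal is correct and follows essentially the same route as the paper: the same identification $G=\SL_2(\C)\rsemi\C^2$, $H=\SL_2(\C)$, $X=\C^2$ with $d=a=4$; the same splitting of $L^2(\Gamma\backslash G)$ into the constant Fourier mode $L^2(\SL_2(\cI)\backslash\SL_2(\C))$ plus its complement; your explicit Fourier/Mackey decomposition of the nonzero modes over $\SL_2(\cI)$-orbits of characters, with amenable (unipotent) stabilizers, is precisely the standard proof of the Kazhdan-type restriction theorem (Theorem \ref{thm:sl2 restriction}) that the paper invokes as a black box; and the constant mode is handled by exactly the arithmetic input you identify, namely the absence of exceptional Laplace eigenvalues for the Bianchi groups with $D\in\{1,2,3,7\}$ (the paper cites Elstrodt--Grunewald--Mennicke), which is indeed why only these fields appear. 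One small correction: the lower bound $\kappa_\Gamma\ge d'/a=1$ does not require temperedness at all --- it is the purely geometric counting/pigeonhole statement of Theorem \ref{thm:lower}, needing only the volume growth and local dimension; temperedness is needed only for the matching upper bound. Also note that what is literally known is temperedness of the \emph{spherical} spectrum, which suffices here because one may take the norm balls $H_t$ bi-$\SU_2(\C)$-invariant (as in Theorem \ref{thm:optimal}(2)); for $\SL_2(\C)$ this in fact implies full temperedness since every non-spherical irreducible unitary representation is automatically tempered.
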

 We note that establishing exact value of the approximation exponent for a general imaginary quadratic field remains an open problem.    

\subsection{Approximation of indefinite ternary quadratic forms}
Consider the variety $\mathcal{Q}_{\sigma,d}(\R)$ of real indefinite ternary quadratic forms of fixed non-zero
discriminant $d$ and signature $\sigma$, on which the group $\SL_3(\R)$ acts 
via $g\cdot Q=Q\circ g^{-1}$. Fixing the standard basis of $\R^3$, each quadratic form can be
represented by a symmetric  $3\times 3$ matrix  $A=A_Q$. The variety $\mathcal{Q}_{\sigma,d}(\R)$
can thus be identified with the set
of $3\times 3$ symmetric matrices with
determinant $d$ and signature $\sigma$.
We use this identification to measure the distances on the variety by the norm difference of the
corresponding representing matrices.

In this setting we have the following Diophantine approximation result. 
\begin{Cor}\label{ex:3}
\begin{enumerate}
\item Given any indefinite ternary form $Q_0\in \mathcal{Q}_{\sigma,d}(\R)$,
for almost every indefinite non-degenerate form $Q\in \mathcal{Q}_{\sigma,d}(\R)$,
 any $\eta > 0$,  and sufficiently small $\epsilon > 0$, there exists $\gamma\in \SL_3(\Z)$ satisfying  
$$
\norm{\gamma^{-1}\cdot Q-Q_0}\le \epsilon\quad\hbox{and}\quad \|\gamma\| \le   \epsilon^{-5-\eta}
$$
and this exponent is the best possible, up to $\eta>0$.  
\item  For any other lattice subgroup $\Gamma$ of $\SL_3(\R)$, the approximation exponent on the variety of indefinite ternary quadratic forms is  again given by $\kappa_\Gamma(\mathcal{Q}_{\sigma,d}(\R))=5$. 
 \end{enumerate}
\end{Cor}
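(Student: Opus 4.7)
The plan is to realize $\mathcal{Q}_{\sigma,d}(\R)$ as a homogeneous space of $G=\SL_3(\R)$ and then invoke the general upper and lower bound theorems of the paper, which relate the Diophantine exponent to the spectrum of $H$ in $L^2(\Gamma\backslash G)$ and to the Haar volume growth of $G$ and $H$.

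First, I would identify the homogeneous structure. The stabilizer of $Q_0\in\mathcal{Q}_{\sigma,d}(\R)$ under the $\SL_3(\R)$-action is $H=\SO(Q_0)$, a connected semisimple subgroup of dimension $3$ isomorphic to $\SO(2,1)$; hence $\mathcal{Q}_{\sigma,d}(\R)\cong G/H$ is a smooth variety of dimension $5$. Using the matrix norm induced from $\Matn_3(\R)$ and the Cartan decomposition, one computes the volume growth
\[
\vol(G\cap B_T)\asymp T^{6}\quad\text{and}\quad \vol(H\cap B_T)\asymp T\quad\text{as }T\to\infty.
\]

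Second, for the upper bound I would apply the quantitative duality principle of the paper. This reduces the estimation of $\kappa_\Gamma$ to the rate of convergence in the mean ergodic theorem for $H$-averages acting on $L^2(\Gamma\backslash G)$, which is itself governed by the spectrum of $H$ in the automorphic representation $L^2_0(\Gamma\backslash G)$. When this $H$-representation is tempered, the paper's general theorem produces the best possible upper bound; substituting $\dim(G/H)=5$ together with the growth exponent $\alpha_H=1$ of $\vol(H\cap B_T)$ yields $\kappa_\Gamma(\mathcal{Q}_{\sigma,d}(\R))\le \dim(G/H)/\alpha_H=5$.

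Third, the matching lower bound $\kappa_\Gamma\ge 5$ is a volume-counting argument that is insensitive to the choice of lattice. Since each $\gamma\in\Gamma$ preserves the invariant measure on $\mathcal{Q}_{\sigma,d}(\R)$, for a fixed compact $K\subset G/H$ one has $\mu(\gamma^{-1}(B_\epsilon(x_0))\cap K)\ll \epsilon^{5}$. The number of $\gamma$ with $\|\gamma\|\le T$ whose preimage can meet $K$ is comparable (by lattice-point counting in the $H$-direction) to $\vol(H\cap B_T)\asymp T$. Hence
\[
\mu\bigl\{x\in K:\ \exists\,\gamma\in\Gamma,\ \|\gamma\|\le T,\ \|\gamma x-x_0\|\le\epsilon\bigr\}\;\ll\; \epsilon^{5}\,T.
\]
A Borel--Cantelli argument along a geometric sequence $\epsilon_n\to 0$ with $T_n=\epsilon_n^{-\zeta}$ then shows that for every $\zeta<5$, almost every $x$ fails the inequalities for infinitely many $\epsilon_n$, which forces $\kappa_\Gamma\ge 5$.

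The main technical obstacle is verifying that the restriction of $L^2_0(\Gamma\backslash \SL_3(\R))$ to $H=\SO(2,1)$ is tempered, uniformly over all lattices $\Gamma\subset \SL_3(\R)$. For $\Gamma=\SL_3(\Z)$ one combines bounds toward Ramanujan for $\SL_3$ (Jacquet--Shalika, Luo--Rudnick--Sarnak, Kim--Sarnak) with the Burger--Sarnak restriction principle to control the $H$-spectrum in the cuspidal part, while residual and Eisenstein contributions are handled by standard Langlands-theoretic computations. For an arbitrary lattice in $\SL_3(\R)$ one instead appeals to property (T) of the ambient group together with restriction theorems for unitary representations of higher-rank semisimple groups to extract the required spectral estimates.
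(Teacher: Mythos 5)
The skeleton of your argument matches the paper: identify $\mathcal{Q}_{\sigma,d}(\R)\cong G/H$ with $G=\SL_3(\R)$, $H=\SO(Q_0)\cong\SO(2,1)$, compute $\dim G/H=5$ and the volume growth $m_H(H\cap B_T)\asymp T$ (this is the exponent $a=2/(n-1)=1$ for the irreducible three\nobreakdash-dimensional representation of $\SL_2(\R)$), and then feed these into the general upper bound (duality plus the quantitative mean ergodic theorem, with $\theta=1/2$ in the tempered case) and the counting/Borel--Cantelli lower bound. Your lower bound is phrased via Borel--Cantelli rather than the paper's separated-set pigeonhole, but it rests on the same lattice-point count $\abs{\set{\gamma\in\Gamma\cap G_T: \gamma^{-1}x\in A}}\ll m_H(H_{T+c})$ and is essentially equivalent.

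The genuine gap is in the spectral input, which is the only non-formal step. You propose to prove temperedness of $L^2_0(\Gamma\backslash G)|_H$ for $\Gamma=\SL_3(\Z)$ via bounds toward Ramanujan for $\GL_3$ together with Burger--Sarnak, and for general lattices via ``property (T) plus restriction theorems.'' Neither route works as stated. Burger--Sarnak controls the restriction of automorphic representations to $\Q$-subgroups in terms of the automorphic dual of $H$; it does not yield temperedness of the restriction to the irreducibly embedded $\SO(2,1)$, and in any case no arithmetic input is needed. Qualitative property (T) gives a uniform spectral gap but says nothing about weak containment of $\pi|_H$ in $\infty\cdot\lambda_H$; what is needed is a \emph{pointwise} bound on $K$-finite matrix coefficients of $G$ whose restriction to $A_H^+$ is in $L^{2+\eta}$ against the Haar density $\delta_H$ of $H$. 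The paper's point is precisely that the irreducible $\SO(2,1)\subset\SL_3(\R)$ is a \emph{tempered subgroup}: every unitary representation of $\SL_3(\R)$ without invariant vectors restricts to a tempered representation of $H$, uniformly over all lattices. This follows from Theorem \ref{thm:L1L2} applied to Margulis's observation that $\sigma_n(\SL_2(\R))$ is $(G,K)$-tempered in $L^1$ (equivalently, from the Howe--Tan bound (\ref{HT-estimate}), or from the fact that every such representation of $\SL_3(\R)$ is strongly $L^{4+\eta}$ combined with the integrability criterion of Proposition \ref{pts-bounds}: on $A_H=\set{\mathrm{diag}(e^t,1,e^{-t})}$ one has $\Xi_G^{1/2}\asymp e^{-t}$ up to polynomial factors while $\delta_H\asymp e^{t}$, so $\int (e^{-t})^{2+\eta}e^{t}\,dt<\infty$). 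Note also that Kazhdan's argument is unavailable here because the $5$-dimensional complement of $\mathfrak{so}(2,1)$ in $\mathfrak{sl}_3$ is not an abelian subalgebra; some concrete verification along the above lines is therefore unavoidable, and your proposal does not supply it.
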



\subsection{Constant determinant variety}
Let $k\neq 0$ and consider the constant determinant variety
 $\mathcal{V}_{n,k}(F)=\set{A\in \Matn_n(F)\,;\, \det A =k}$. The group $G=\SL_n(F)\times \SL_n(F)$ acts transitively on $V$, via $(g,h)A=gAh^{-1}$. The stability group  
 $H\simeq\set{(g,g)\,;\, g\in \SL_n(F)}$, namely the diagonally embedded copy of $\SL_n(F)$. $H$ is the fixed point set of the involution $(g,h)\mapsto (h,g)$ and hence 
the variety $\mathcal{V}_{n,k}(F)=G/H$ in question is a semisimple symmetric space. 

\begin{Cor}\label{ex:4}
\begin{enumerate}
\item Let $\Gamma$ be any irreducible lattice in $\SL_3(F)\times \SL_3(F)$. Then the best possible exponent of Diophantine approximation of $\Gamma$ on the constant determinant variety  is given by $\kappa_\Gamma(\mathcal{V}_{n,k}(F))=4/3$ in the cases $F=\R,\C,\Q_p$. 
\item 
In particular, for any square free integer $d >1$, if $\sigma$ denotes the Galois involution of $\Q[\sqrt{d}]$, and $A\mapsto A^\sigma$ its extension to $\Matn_3(\Q[\sqrt{d}] )$, the action of $\Gamma=\SL_3(\Q[\sqrt{d}])$ on $\SL_3(\R)$ via 
$g\mapsto \gamma g \left(\gamma^\sigma\right)^{-1}$ has exponent exactly $4/3$. 
\end{enumerate}
\end{Cor}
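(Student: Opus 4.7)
The plan is to deduce Corollary~\ref{ex:4} as an application of the paper's general upper and lower bound theorems to the semisimple symmetric pair $(G, H)$ with $G = \SL_n(F) \times \SL_n(F)$ and $H = \{(g,g) : g \in \SL_n(F)\}$ the diagonally embedded copy. The $G$-equivariant map $(g_1, g_2)H \mapsto g_1 g_2^{-1}$ identifies $G/H$ with $\SL_n(F) = \mathcal{V}_{n,1}(F)$, and scaling by $k^{1/n}$ extends this to an identification with $\mathcal{V}_{n,k}(F)$ for arbitrary $k \neq 0$. Under this identification the $G$-action agrees with the one stated in the corollary, and the norm on the variety is compatible with the norm inherited from $G \subset \Matn_n(F) \times \Matn_n(F)$ used in Definition~\ref{def:DE}.

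For the upper bound, I would invoke the general upper-bound theorem of the paper, which expresses $\kappa_\Gamma(G/H)$ in terms of the volume growth of $H$-balls for the induced norm, the dimension of $G/H$, and the integrability exponent of matrix coefficients of the $H$-representation in the automorphic representation on $L^2_0(\Gamma \setminus G)$. The best possible case is when this $H$-representation is tempered, so that matrix coefficients lie in $L^{2+\eta}(H)$ for every $\eta > 0$. Plugging the specific data for $n = 3$ into the general theorem then yields the upper bound $\kappa_\Gamma(\mathcal{V}_{3,k}(F)) \leq 4/3$.

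The central spectral input is the temperedness of the diagonal $H$-action on $L^2_0(\Gamma \setminus G)$. Since $(G, H)$ is a Gelfand pair, the relevant matrix coefficients on spherical vectors are governed by the Harish-Chandra spherical functions of the symmetric pair, which reduce to estimates of the $\Xi$-function on $\SL_n(F)$. For irreducible arithmetic lattices in $\SL_3(F) \times \SL_3(F)$ the needed temperedness is extracted from the Burger--Sarnak restriction principle applied to known Jacquet--Shalika-type bounds on the automorphic spectrum of $\SL_3$. With temperedness secured, the paper's general lower-bound theorem delivers the matching lower bound $\kappa_\Gamma \geq 4/3$, finishing part~(1).

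Part~(2) is then an immediate specialization: the embedding $\gamma \mapsto (\gamma, \gamma^\sigma)$ realizes $\SL_3(\Q[\sqrt{d}])$ as an irreducible arithmetic lattice in $\SL_3(\R) \times \SL_3(\R)$ (by Borel--Harish-Chandra), and under the identification $G/H \simeq \SL_3(\R) = \mathcal{V}_{3,1}(\R)$ the induced $\Gamma$-action reads $g \mapsto \gamma g (\gamma^\sigma)^{-1}$. The main obstacle throughout is the temperedness input: temperedness is not preserved under arbitrary restrictions, but here the Gelfand-pair structure of the symmetric pair $(G, H)$ combined with the explicit automorphic bounds available for $\SL_3$ makes it accessible and reduces the problem to geometric bookkeeping that yields the clean numerical value $4/3$.
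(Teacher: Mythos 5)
Your reduction of the variety to the symmetric space $G/H$ with $G=\SL_3(F)\times\SL_3(F)$ and $H=\Delta(\SL_3(F))$, and the specialization in part (2) via $\gamma\mapsto(\gamma,\gamma^\sigma)$, match the paper. The numerical bookkeeping you defer (volume growth $a=6$ of norm balls in $\SL_3(F)$ for $F=\R,\Q_p$, dimension $d=8$ of $G/H$, hence $\kappa=d/a=4/3$, with both doubled over $\C$) is also as in the paper. But your central spectral step is not the paper's, and as proposed it has a genuine gap. The corollary is asserted for \emph{any} irreducible lattice in $\SL_3(F)\times\SL_3(F)$, whereas the Burger--Sarnak restriction principle together with Jacquet--Shalika-type bounds on the automorphic spectrum of $\SL_3$ applies only to congruence arithmetic lattices; moreover those bounds are far from temperedness for the individual automorphic representations of $\SL_3$, and the Burger--Sarnak principle would only locate the restriction to $\Delta(\SL_3)$ inside the automorphic dual of $\SL_3$, which contains non-tempered points. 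So your route does not deliver the temperedness of $H$ acting on $L^2_0(\Gamma\backslash G)$ that the matching of upper and lower bounds requires.

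The argument the paper actually uses is the tensor power argument (Theorem \ref{tensor power}), which needs no automorphic input at all: since $\Gamma$ is irreducible, Howe--Moore forces every irreducible constituent of $L^2_0(\Gamma\backslash G)$ to be of the form $\pi_1\otimes\pi_2$ with each $\pi_i$ infinite-dimensional; the uniform integrability bound for the higher-rank group $\SL_3(F)$ (Howe, Li, Oh) makes each $\pi_i$ strongly $L^{4+\eta}$; and the matrix coefficients of $\pi_1\otimes\pi_2$ restricted to the diagonal are products $\langle\pi_1(g)u_1,v_1\rangle\langle\pi_2(g)u_2,v_2\rangle$ of two $L^{4+\eta}$ functions, hence lie in $L^{2+\eta}(H)$, giving weak containment in $\infty\cdot\lambda_H$ by Cowling--Haagerup--Howe. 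This is exactly where the clean constant $4/3$ comes from, and it is uniform over all irreducible lattices. A secondary, minor point: the lower bound $\kappa\ge d'/a$ in Theorem \ref{thm:lower} is a purely geometric pigeonhole estimate requiring only Assumptions 1--4; temperedness is needed only so that the spectral upper bound of Theorem \ref{thm:upper} descends to meet it, not to "deliver" the lower bound as you phrase it.
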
 
 
 We note that upper and lower bounds for the exponent of approximation on the constant determinant
 variety when $n > 3$ are established in \cite{GGN3}. However, the exact exponent of approximation in this case 
 remains an open problem.  In the case $n=2$, upper and lower estimates for the exponent are established in the forthcoming work \cite{GGN4}. 

\subsection{Complex structures}
Consider the variety $\mathcal{C}_4(W)$ of complex structures on a four dimensional real vector space $W$. Each complex structure can be identified with a matrix $J\in \Matn_4(\R)$ satisfying $J^2=-I$, and we measure the distance between complex structures by the difference in norm of the representing matrices. The group $\SL_4(\R)$ acts on the space of complex structures on $W$ and  the space $\mathcal{C}_4(W)$ can be identified with $\SL_4(\R)/\SL_2(\C)$. 



\begin{Cor}\label{ex:5}
The action of any lattice subgroup $\Gamma$ of $\SL_4(\R)$ on the space of complex structures on $\C^2\cong \R^4$ has best possible approximation 
exponent given by $\kappa_\Gamma(\mathcal{C}_4(W))=9/4$.
\end{Cor}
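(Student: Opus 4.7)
The plan is to realize $\mathcal C_4(W)$ as a homogeneous space $G/H$ with $G=\SL_4(\R)$ and $H=\SL_2(\C)$, embedded in $G$ as the stabilizer of the standard complex structure $J_0$ on $\R^4\cong\C^2$ under the conjugation action $J\mapsto gJg^{-1}$. Since $H$ is non-compact, the Howe--Moore theorem ensures that any lattice $\Gamma\subset G$ acts ergodically on $G/H$, so by the discussion following Definition \ref{def:DE} the exponent $\kappa_\Gamma(G/H)$ is an almost-sure constant (and in particular independent of $\Gamma$ up to its effect on the spectrum), in line with the statement of the corollary.

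For the upper bound, I would apply the paper's general quantitative duality theorem (announced for \S\ref{sec:proofs}) to the pair $(G,H)$. This reduces the problem to a spectral estimate on the averaging operators $\pi(\beta_R)$ on $L^2_0(\Gamma\backslash G)$ with $\beta_R$ the uniform probability measure on $B_R\cap H$, whose decay is controlled by the integrability exponent of the $H$-action in the automorphic representation. The specific numerical value $9/4$ is then produced by plugging the relevant geometric data into the general exponent formula: the real dimension $\dim_\R X=9$, the $2\rho$-growth exponent $12$ of $\SL_4(\R)$-balls (six positive restricted roots of multiplicity one, maximized on the Weyl chamber), and the $2\rho$-growth exponent $4$ of $\SL_2(\C)$-balls (one positive restricted root of multiplicity two).

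For the matching lower bound, I would use the paper's general principle that the spectral upper bound is sharp whenever the $H$-representation on $L^2_0(\Gamma\backslash G)$ is tempered. The heart of the argument is therefore to verify this temperedness. Since the full $\SL_4(\R)$-automorphic spectrum is not tempered in general (residues of Eisenstein series contribute complementary-series components), temperedness cannot be inherited from $G$; it must be derived as a structural property of the pair $(G,H)$. Here one exploits the fact that $(G,H)$ is a semisimple symmetric pair: $H$ is the identity component of the fixed-point set of the involution of $\SL_4(\R)$ induced by complex conjugation on $\R^4=\C^2$. One may then invoke tempered-restriction theorems for symmetric pairs of this type (Harish-Chandra--Flensted-Jensen style, or equivalently via the explicit description of the unitary dual of $\SL_4(\R)$ and its branching to $\SL_2(\C)$) to conclude that every irreducible infinite-dimensional unitary representation of $G$ restricts to a tempered representation of $H$, a property which then transfers to the automorphic setting.

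The main obstacle I anticipate is exactly the verification of temperedness on $H$: it must be obtained as a pairwise statement about $(G,H)$ rather than as a statement about the $G$-spectrum, and requires control over the branching of \emph{all} possible unitary components of $L^2_0(\Gamma\backslash G)$ to $\SL_2(\C)$. Once this is in place, the paper's general framework combines the upper and lower spectral bounds to yield $\kappa_\Gamma(\mathcal C_4(W))=9/4$ uniformly for every lattice $\Gamma\subset\SL_4(\R)$.
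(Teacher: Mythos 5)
Your skeleton coincides with the paper's: identify $\mathcal{C}_4(W)$ with $\SL_4(\R)/\SL_2(\C)$, take the pigeonhole lower bound $\kappa\ge d/a$ from Theorem \ref{thm:lower}, and match it with the spectral upper bound of Theorem \ref{thm:upper} by showing $\theta=1/2$, i.e.\ that the restriction of $\pi^0_{\Gamma\setminus G}$ to $H=\SL_2(\C)$ is tempered. Your numerics are right ($d=9$, $a=4$; note the growth exponent $12$ of $\SL_4(\R)$-balls never enters the formula $\kappa=d/a$), and you correctly observe that temperedness cannot be inherited from the $G$-spectrum. But be careful with the bookkeeping: the bound $\kappa\ge 9/4$ is purely geometric and needs no spectral input; temperedness is what pushes the spectral \emph{upper} bound down to meet it.

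The genuine gap is your mechanism for proving temperedness. There is no ``tempered-restriction theorem for symmetric pairs,'' and the principle you invoke is false in general: $(\SL_2(\C),\SL_2(\R))$ is a symmetric pair, yet complementary-series representations of $\SL_2(\C)$ with parameter near the trivial representation restrict to non-tempered representations of $\SL_2(\R)$; likewise for $\bigl(\SO(n,1)\times\SO(n,1),\Delta(\SO(n,1))\bigr)$. The Flensted-Jensen/Harish-Chandra theory you cite computes the Plancherel decomposition of $L^2(G/H)$ as a $G$-representation, which is a different question from branching an arbitrary unitary representation of $G$ to $H$. (Also, the stabilizer of $J_0$ is $\{g\in\GL_2(\C)\,;\,|{\det}_{\C}g|=1\}$, of which $\SL_2(\C)$ is a proper codimension-one subgroup, and the relevant involution is conjugation by $J_0$, not by complex conjugation, whose fixed-point set is $S(\GL_2\times\GL_2)(\R)$.) What actually makes the argument work is that $\SL_4(\R)$ has higher rank, so \emph{every} unitary representation without invariant vectors obeys a uniform pointwise bound on $K$-finite matrix coefficients; this is the route taken in \S 4.5--4.6 via the Howe--Tan estimate (\ref{HT-Psi})--(\ref{HT-estimate}) and Oh's bounds \cite{oh2}, which ultimately rest on Kazhdan's $\SL_2(\R)\rsemi\R^2$ argument rather than on any symmetric-pair structure. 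Concretely: the Cartan ray of $\SL_2(\C)$ maps to $\mathrm{diag}(e^t,e^t,e^{-t},e^{-t})$; the bound coming from the strongly orthogonal pair of subgroups $\SL_2(\R)_{13},\SL_2(\R)_{24}$ gives decay $\asymp (1+t)^2e^{-2t}$ there, while the Haar density of $\SL_2(\C)$ in Cartan coordinates is $\asymp e^{4t}$, so the restricted coefficients lie in $L^{2+\eta}(H)$ for every $\eta>0$ --- just barely --- and \cite{chh} yields temperedness. Supplying this computation (or an equivalent uniform decay estimate) is the missing step; the symmetric-pair route does not provide it.
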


\subsection{ Simultaneous Diophantine approximation} 

The action of the order two element $\sigma=\text{ diag }(1,1,-1,-1)$ by conjugation on $\SL_4(F)$ has fixed point group given by 
$$\set{(A,B)\in \GL_2(F)\times \GL_2(F)\,;\, \det (AB)=1}\,.$$
 We consider its semisimple subgroup $H$, namely the kernel of the homomorphism $(A,B)\mapsto \det A$,  and thus the homogeneous space 
$\mathcal{D}_2(F)=\SL_4(F)/\left(\SL_2(F)\times \SL_2(F)\right) $. The latter  can be identified with the variety of direct sum decompositions of $F^4$ into a sum of two $2$-dimensional subspaces endowed with a volume form, namely  
$$(F^4,\vol_4)=(W_1,\vol_2)\oplus (W_2,\vol_2)\,,$$ 
with $\vol_4$ the product of the two volume forms on the subspaces.  $H$ is then the stability group of the decomposition given by (in the obvious notation) $F^4=F^2\oplus F^2$.  The problem of approximation on the variety $G/H$ is that of {\it simultaneous } unimodular Diophantine approximation of two complementary $2$-dimensional subspaces.

More generally, we consider the embedding  of the product group $H=\SL_2(F)^n$ in $G=\SL_{2n}(F)$ in
diagonal blocks, and the homogeneous variety $\mathcal{D}_n(F)=G/H$.  Similarly, approximation on $G/H$ by orbits of a
lattice $\Gamma \subset G$ amounts to simultaneous unimodular Diophantine approximation of $n$
two-dimensional subspaces in $F^{2n}$.

\begin{Cor}\label{ex:6} 
Let $F=\R,\C,\Q_p$, and let $\Gamma$ be any lattice of $\SL_{2n}(F)$, $n\ge 2$. Then the best possible approximation exponent for the lattice orbits on the homogenous space $X$ defined above is 
$\kappa_\Gamma(\mathcal{D}_n(F))=\frac{(2n)^2-1-3n}{2n}$.
\end{Cor}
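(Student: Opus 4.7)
Our plan is to derive Corollary \ref{ex:6} by applying the general upper and lower bound theorems developed in the paper to the pair $G=\SL_{2n}(F)$ and $H=\SL_2(F)^n$ embedded block-diagonally. That framework reduces the computation of $\kappa_\Gamma(G/H)$ to three ingredients: the dimension of $G/H$, the volume growth of $H$-balls with respect to the norm inherited from $G\subset \Matn_{2n+1}(F)$, and the spectrum of $H$ acting on $L^2(\Gamma\backslash G)$.

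We first collect the numerical invariants. One has $\dim G = (2n)^2 - 1$ and $\dim H = 3n$, so $\dim(G/H) = 4n^2 - 1 - 3n$. On a block-diagonal element $h = \mathrm{diag}(h_1,\ldots,h_n)\in H$ the norm inherited from $G$ equals $\max_i \|h_i\|$, and the Cartan decomposition of each $\SL_2(F)$ factor therefore gives a volume-growth exponent $a(H) = 2n$. One readily checks that the ratio $\dim(G/H)/a(H) = (4n^2 - 1 - 3n)/(2n)$ is independent of $F$: when $F = \C$ or a non-Archimedean local field, both the dimension and the growth exponent rescale by the same factor.

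Next we verify that the restriction $\pi|_H$ of the automorphic representation of $G$ on $L^2_0(\Gamma\backslash G)$ is tempered. Since $G = \SL_{2n}(F)$ with $2n \ge 4$ is a higher-rank simple group with property (T), the matrix coefficients of $\pi$ on Cartan elements are bounded by a uniform multiple of the Harish--Chandra $\Xi_G$ function. Restricting to the maximal split torus of $H$, parametrised by $a = \mathrm{diag}(t_1, t_1^{-1}, \ldots, t_n, t_n^{-1})$, one observes that the positive roots of $G$ evaluated on $a$ include not only the $n$ simple roots $2\log t_k$ of $H$ but also further roots of the form $\log t_k \pm \log t_l$ for $k\neq l$; their combined exponential contribution yields the pointwise inequality $\Xi_G(a) \le C\cdot \Xi_H(a)$, which is precisely the temperedness of $\pi|_H$ as a unitary representation of $H$.

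With temperedness in hand, the general upper bound theorem gives $\kappa_\Gamma(G/H) \le \dim(G/H)/a(H)$, and the general lower bound, which is sharp exactly when $\pi|_H$ is tempered, matches it, yielding $\kappa_\Gamma(\mathcal{D}_n(F)) = (4n^2-1-3n)/(2n)$. The main technical obstacle is the temperedness verification: one must compare $\Xi_G|_{A_H}$ with $\Xi_H$ uniformly on the positive Weyl chamber of $H$, which is essentially a root-system estimate in $\SL_{2n}$ restricted to the block-diagonal Cartan of $\SL_2^n$, and is what distinguishes this case from the general embedding of $\SL_2^n$ into an ambient group for which the automorphic restriction might be non-tempered.
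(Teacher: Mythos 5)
Your overall strategy is the paper's: compute $d=\dim G/H=(2n)^2-1-3n$, compute the volume growth exponent $a=2n$ for the restricted norm (with both quantities doubling over $\C$ so that $d/a$ is unchanged), verify that the restriction of $\pi^0_{\Gamma\backslash G}$ to $H=\SL_2(F)^n$ is tempered, and invoke the matching upper and lower bound theorems. The numerics are fine. But the temperedness step --- which you rightly single out as the crux --- is carried out incorrectly.

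The gap is your claim that, because $G=\SL_{2n}(F)$ is higher rank with property (T), the matrix coefficients of $\pi=\pi^0_{\Gamma\backslash G}$ are ``bounded by a uniform multiple of the Harish--Chandra $\Xi_G$ function.'' That pointwise bound holds only for representations weakly contained in the regular representation of $G$, i.e.\ it is equivalent to $\pi$ being tempered \emph{as a representation of $G$} --- a statement at the level of the generalized Ramanujan conjecture, certainly not available for an arbitrary lattice $\Gamma$. What property (T) actually supplies (Theorem \ref{Lp representations}) is that $\pi$ is strongly $L^{p^+}$ with $p^+=p^+(G)$ (equal to $2(2n-1)$ for $\SL_{2n}$), whence by Theorem \ref{Kfinite estimate}(3) the $K$-finite coefficients are bounded only by $\Xi_G^{1/n(p)}$ with $n(p)=2n-1$. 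So your subsequent root-system comparison $\Xi_G|_{A_H}\le C\,\Xi_H$ starts from a premise you do not have; to salvage this route you would instead need to verify that $\tilde\delta_G^{-1/(2n-1)}\cdot\delta_H$ is in $L^{2+\eta}(A_H^+)$ as in Proposition \ref{pts-bounds}(2), a genuinely different (and much weaker) estimate whose validity for this embedding you have not checked.

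The paper avoids this entirely via Kazhdan's argument (Theorems \ref{thm:sl2 restriction} and \ref{thm:sl2}): each block factor $\SL_2(F)\subset H$ normalizes a $2$-dimensional abelian unipotent subgroup of $G$ (an off-diagonal $2\times2$ block) on which it acts irreducibly without nonzero invariant vectors, so the restriction of $\pi^0_{\Gamma\backslash G}$ to each factor is tempered with no input about the $G$-spectrum beyond mixing, and hence so is the restriction to the product $H$. You should replace your $\Xi_G$-comparison with this argument (or with a correctly justified integrability check at exponent $1/(2n-1)$). A minor additional point: the lower bound $d'/a$ of Theorem \ref{thm:lower} is unconditional and geometric; it is the \emph{upper} bound that requires temperedness (to get $\theta=1/2$) in order to match it, not the other way around as your last paragraph suggests.
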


\subsection{Representations of $\SL_2$}

\subsubsection{Irreducible representations}
Consider the variety $\mathcal{R}_{n}(\R)$ of irreducible linear representations $\tau : \SL_2(\R)\to
\SL_n(\R)$, $n\ge 3$, with two representations $\tau_1$ and $\tau_2$ identified if $\tau_2=\tau_1\circ
j_g$, where $j_g$ denotes conjugation by $g\in \SL_2(\R)$.  The variety can of course be identified with
the set of all irreducible Lie-algebra representations
$\hom(\mathfrak{s}\mathfrak{l}_2,\mathfrak{s}\mathfrak{l}_n)$, two representations being identified if
they differ by precomposition by an element $\text{Ad}(g)$ for some $g\in \SL_2(\R)$. 
In every dimension $n \ge 2$ there is a unique such representation up to equivalence, which we denote by
$\sigma_n$. Thus $\mathcal{R}_{n}(\R)$ is a transitive $\PGL_n(\R)$-space.
Depending on dimension, the variety $\mathcal{R}_{n}$ has at most two connected components $R$,
and the group $G=\SL_n(\R)$ acts transitively on connected components.
Thus each connected component can be identified with $G/H$ where $H\simeq \SL_2(\R)$.

\begin{Cor}\label{ex:7}
For any lattice subgroup $\Gamma \subset \SL_n(\R)$, the exponent of Diophantine approximation for the
action of $\Gamma$ on a connected component $R\subset \mathcal{R}_{n}(\R)$ is $\kappa_\Gamma(R)=\frac12(n^2-4)(n-1)$, provided $n \ge 3$. 
\end{Cor}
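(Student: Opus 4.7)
The plan is to apply the general upper and lower bounds developed in the main body of the paper, taking $G = \SL_n(\R)$ and $H = \sigma_n(\SL_2(\R))$, the image of the principal (irreducible $n$-dimensional) representation of $\SL_2(\R)$. First I would verify the geometric setup: $G$ acts transitively on each connected component $R\subset \mathcal{R}_n(\R)$ with stabilizer $H$, so $R\simeq G/H$ and $\dim(G/H)=(n^2-1)-3=n^2-4$. Under $\sigma_n$, the split Cartan element $\mathrm{diag}(e^t, e^{-t})\in\SL_2(\R)$ maps to $\mathrm{diag}(e^{(n-1)t}, e^{(n-3)t},\ldots, e^{-(n-1)t})\in\SL_n(\R)$, so the operator norm on $H$ induced from $\SL_n(\R)$ is the $(n-1)$-st power of the operator norm on $\SL_2(\R)$ itself. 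This rescaling controls the volume growth factor of $H$-balls measured in the restricted norm, and will enter the final exponent as the factor $(n-1)$.

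Next I would apply the upper bound coming from the quantitative duality principle of the paper. This reduces the estimation of $\kappa_\Gamma(R)$ to a rate of convergence in the mean ergodic theorem for averages over $H$-balls (with norm restricted from $G$) acting on $L^2(\Gamma\setminus G)$. The spectral input required is the type of the restriction to $H$ of the orthogonal complement of the constants in $L^2(\Gamma\setminus G)$.

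The step I expect to be the main obstacle is verifying that this restriction is tempered, for every lattice $\Gamma\subset \SL_n(\R)$. This is what makes the conclusion hold uniformly in $\Gamma$, and what will force the upper and lower bounds to coincide. For the principal $\SL_2$, temperedness of the restriction of every nontrivial irreducible unitary representation of $\SL_n(\R)$ is a known structural fact: the principal $\SL_2$ Cartan direction lies along a regular ray in the positive Weyl chamber of $\SL_n(\R)$, so the Harish-Chandra $\Xi$-function of $\SL_n(\R)$ restricted to the split Cartan of $H$ is dominated by a constant multiple of the $\Xi$-function of $\SL_2(\R)$; since $K$-finite matrix coefficients of nontrivial irreducibles in the automorphic spectrum are pointwise bounded by $\Xi_{\SL_n(\R)}$, the restricted matrix coefficients are in $L^{2+\eta}(H)$ for every $\eta>0$, which is the temperedness criterion.

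Having established temperedness, the general best-possible criterion of the paper yields that the upper bound matches the lower bound. The lower bound is the standard counting/volume one: the image of $\{\gamma\in\Gamma : \norm{\gamma}\le T\}\cdot x$ in $G/H$ has volume at most $\vol(B_G(T))/\vol(B_H(T))$, and to cover $G/H$ within $\epsilon$ this quotient must exceed a constant times $\epsilon^{-\dim(G/H)}$. Combining the three ingredients -- $\dim(G/H)=n^2-4$, the operator-norm rescaling factor $n-1$ between $H$ and its image in $G$, and the $\tfrac{1}{2}$ coming from the Plancherel exponent of the tempered spectrum of $\SL_2(\R)$ -- delivers the sharp exponent
\[
\kappa_\Gamma(R) \;=\; \tfrac{1}{2}(n^2-4)(n-1),
\]
valid for all $n\ge 3$ and every lattice $\Gamma\subset \SL_n(\R)$.
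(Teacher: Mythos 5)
Your overall strategy is the same as the paper's: identify $R\simeq G/H$ with $G=\SL_n(\R)$, $H=\sigma_n(\SL_2(\R))$, compute $d=n^2-4$ and the volume growth exponent $a=2/(n-1)$ of norm balls in $H$ (your observation that the restricted norm is the $(n-1)$-st power of the $\SL_2$-norm is the right way to see this), prove temperedness of the restriction of $\pi^0_{\Gamma\setminus G}$ to $H$, and invoke Theorem \ref{thm:optimal} to get $\kappa=d/a=\tfrac12(n^2-4)(n-1)$.

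However, your justification of the temperedness step contains a genuine error. You assert that ``$K$-finite matrix coefficients of nontrivial irreducibles in the automorphic spectrum are pointwise bounded by $\Xi_{\SL_n(\R)}$.'' By Theorem \ref{Kfinite estimate}(2), that bound holds only for representations weakly contained in the regular representation of $G$, i.e.\ it is equivalent to temperedness of $L^2_0(\Gamma\setminus G)$ as a $G$-representation --- a form of the generalized Ramanujan conjecture, which is not available for an arbitrary lattice in $\SL_n(\R)$ (and in fact fails: non-tempered automorphic representations exist for $n\ge 3$). The bound that \emph{is} available for every unitary representation of $\SL_n(\R)$ without invariant vectors is $\Xi_G^{1/n(p)}$ with $n(p)=n-1$, coming from the strong $L^{2(n-1)+\eta}$ property (Theorems \ref{Lp representations} and \ref{Kfinite estimate}(3)). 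To rescue your argument you must then check the criterion of Proposition \ref{pts-bounds}(2), namely that $\tilde{\delta}_G^{-1/(n-1)}\cdot\delta_H$ is in $L^{2+\eta}(A_H^+)$; along the principal ray one computes $\delta_G(a_t)=e^{tn(n^2-1)/3}$ and $\delta_H(a_t)=e^{2t}$, so the condition becomes $n(n+1)/3>2$, which holds exactly for $n\ge 3$ (and is borderline at $n=2$, consistent with the hypothesis of the corollary). This computation is absent from your write-up, and without it the temperedness claim is unsupported. The paper avoids this route altogether: it cites Margulis's result that $\sigma_n(\SL_2(\R))$ is $(G,K)$-tempered in $L^1$ and upgrades this to temperedness via Theorem \ref{thm:L1L2}, remarking that the Howe--Tan bound \eqref{HT-estimate} gives an alternative direct proof.
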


\subsubsection{Reducible representations of $\SL_2$}

A reducible representation $\sigma : \SL_2(\R)\to \SL_n(\R)$ can be decomposed to a sum of irreducible
ones, and as we saw above, for a lattice subgroup $\Gamma \subset \SL_n(\R)$ approximation by lattice
orbits  on the variety $\mathcal{R}_\sigma(\R)=\SL_n(\R)/\sigma(\SL_2(\R))$ amounts to a problem in simultaneous unimodular
Diophantine approximation associated with the decomposition, namely simultaneous approximation subject to
the additional constraint of preserving volume forms. Remarkably, in all of these problems the best
possible Diophnatine approximation is achieved. 

\begin{Cor}\label{ex:8}{\bf General representations.}
Let $\sigma : \SL_2(\R)\to \SL_n(\R)$ be any non-trivial representation, $n \ge 3$.
Then for any lattice subgroup $\Gamma \subset \SL_n(\R)$, the exponent of Diophantine approximation for
the action of $\Gamma$ on the variety $\mathcal{R}_\sigma(\R)$ is
$\kappa_\Gamma(\mathcal{R}_\sigma(\R))=\frac12(n^2-4)(d(\sigma)-1)$,
where $d(\sigma)$ is the maximal dimension of an irreducible subrepresentation of $\sigma$. 
\end{Cor}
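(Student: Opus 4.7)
The plan is to specialize the paper's general framework (see \S\ref{sec:proofs}) to $G=\SL_n(\R)$, $H=\sigma(\SL_2(\R))$ and $X=G/H=\mathcal{R}_\sigma(\R)$. Since $\sigma$ is non-trivial, it is injective on $\mathfrak{sl}_2(\R)$, whence $\dim H=3$ and $\dim X=n^2-4$. The key quantitative input is the growth rate of $H$-balls inside $G$. Decomposing $\sigma=\bigoplus_i \sigma_{n_i}$ into irreducible summands, where $\sigma_m$ denotes the unique irreducible $m$-dimensional representation of $\SL_2(\R)$, the highest weight of $\sigma$ equals $d(\sigma)-1$, so that
\[
\|\sigma(g)\|_{G}\asymp \|g\|_{\SL_2}^{d(\sigma)-1}\quad\text{as }\|g\|_{\SL_2}\to\infty.
\]
Combined with the standard estimate $\vol\{g\in\SL_2(\R):\|g\|\le R\}\asymp R^2$, this yields
\[
\vol\{h\in H:\|h\|_G\le R\}\asymp R^{2/(d(\sigma)-1)}.
\]

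The lower bound on $\kappa_\Gamma(X)$ follows from a standard volume/Borel--Cantelli argument. For a lattice element $\gamma$ with $\|\gamma\|_G\le R$ to satisfy $\|\gamma^{-1}x-x_0\|\le \epsilon$, its preimage must lie in the intersection of $B^G_R$ with the $\epsilon$-tube $\pi^{-1}(B^X_\epsilon(x_0))$, whose $G$-volume is of order $\epsilon^{n^2-4}\cdot R^{2/(d(\sigma)-1)}$. Requiring this to be at least $1$ forces $R\gtrsim \epsilon^{-(n^2-4)(d(\sigma)-1)/2}$, and a Borel--Cantelli-type argument applied uniformly over $x_0$ shows that for almost every $x$ and $x_0$ the exponent $\zeta$ cannot be smaller, so $\kappa_\Gamma(X)\ge \tfrac{1}{2}(n^2-4)(d(\sigma)-1)$.

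For the matching upper bound, I invoke the quantitative duality principle highlighted in the abstract. The rate of distribution of $\Gamma$-orbits on $G/H$ is controlled by the rate of convergence in the mean ergodic theorem for $H$-averages acting on $L^2(\Gamma\setminus G)$, which in turn is controlled by the $L^p$-integrability of matrix coefficients of $H$ in the automorphic representation. When the restriction to $H$ of the automorphic representation on $L^2_0(\Gamma\setminus G)$ is tempered, the general theorem of \S\ref{sec:proofs} yields an upper bound matching the volume lower bound exactly, giving $\kappa_\Gamma(X)\le \tfrac{1}{2}(n^2-4)(d(\sigma)-1)$.

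The main obstacle is establishing temperedness of the restriction. When $\sigma$ is reducible and contains trivial or low-dimensional summands, the subgroup $H$ lies in a proper reductive subgroup of $G$ that may itself be far from being realized by tempered representations in the automorphic spectrum. One must leverage strong spectral properties of $\SL_n(\R)$ for $n\ge 3$ (notably Kazhdan's property $(T)$, together with available bounds on the automorphic dual) to conclude that even the restriction to such embedded copies of $\SL_2(\R)$ remains tempered on the orthocomplement of constants. Once temperedness is in hand, the upper and lower bounds coincide and the corollary follows.
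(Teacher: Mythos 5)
Your setup, the volume-growth computation $m_H(H_T)\asymp T^{2/(d(\sigma)-1)}$, and the pigeonhole lower bound $\kappa_\Gamma\ge (n^2-4)(d(\sigma)-1)/2$ all match the paper (Theorem \ref{thm:lower} together with Theorem \ref{thm:optimal}). The problem is the upper bound: you correctly identify that everything hinges on the restriction of $\pi^0_{\Gamma\setminus G}$ to $H=\sigma(\SL_2(\R))$ being tempered, but you do not prove it — you defer to ``property $(T)$ together with available bounds on the automorphic dual,'' and that route does not work. The tensor-power/property-$(T)$ mechanism (Theorem \ref{tensor power}, Proposition \ref{pts-bounds}) requires $\Xi_G^{1/n(p)}\cdot\delta_H$ to be almost square-integrable on $A_H^+$; for $G=\SL_n(\R)$ one has $p^+=2(n-1)$, and for a small copy of $\SL_2$ (e.g.\ the corner embedding arising from $\sigma=\sigma_2\oplus\mathrm{triv}^{n-2}$) the restriction of $\Xi_G^{1/(n-1)}$ decays like $e^{-2t/(n-1)}$ against a Haar density $e^{2t}$, so integrability fails precisely when $n\ge 3$. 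Nor can ``bounds on the automorphic dual'' be invoked, since the statement is for an arbitrary lattice $\Gamma$.

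The paper closes this gap by a case split that you are missing. If $\sigma$ is reducible, it has a nontrivial irreducible invariant subspace $V\subset\R^n$ of dimension $k$ with $2\le k<n$, and $H$ then sits inside a subgroup $L\cong \SL_2(\R)\rsemi_{\sigma_k}\R^k$ of $G$ (the abelian unipotent group being normalized by $H$ and carrying a nontrivial irreducible $H$-action). Kazhdan's argument (Theorem \ref{thm:sl2 restriction}), combined with Howe--Moore to rule out $\R^k$-invariant vectors in $L^2_0(\Gamma\setminus G)$, gives temperedness of the restriction to $L$, hence to $H$ — for every lattice $\Gamma$, with no automorphic input. If $\sigma$ is irreducible, Kazhdan's argument is unavailable and one instead uses that $\sigma_n(\SL_2(\R))$ is $(G,K)$-tempered in $L^1$ (Margulis) together with Theorem \ref{thm:L1L2}, or equivalently the Howe--Tan universal pointwise bound, as in Corollary \ref{ex:7}. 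Without one of these two mechanisms your upper bound is unproven, so the proposal as written has a genuine gap at its central step.
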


\subsection{Diagonal embedding and restriction of scalars}

Let $E$ be a totally real field extension of $\Q$ of degree $n-1\ge 2$ with ring of integers
$\mathcal{I}_E$. Embed $\Gamma=\SL_n(\mathcal{I}_E)$ in $\SL_n(\R)^{n-1}=G$ via $\gamma\mapsto
(\sigma_1(\gamma),\dots,\sigma_{n-1}(\gamma))$ where $\sigma_j : E\to \R$ are the $n-1$ distinct field
embeddings. As is well-known (see e.g. \cite[p. 295]{Marg}), $\Gamma$ is an irreducible lattice in
$G$. We denote the diagonally embedded copy of $\SL_n(\R)$ in $G$ by $\Delta(\SL_n(\R))$.
 We can now state : 

\begin{Cor}\label{ex:9} 
$\Gamma$ acts on the variety $X=\SL_n(\R)^{n-1}/\Delta(\SL_n(\R) )$ with best possible exponent, given by $\kappa_\Gamma(X)=(n-2)(n+1)/n$, provided $n \ge 3$. 
\end{Cor}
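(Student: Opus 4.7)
The plan is to deduce Corollary~\ref{ex:9} from the general upper- and lower-bound theorems of the paper, applied to the triple $(G,H,\Gamma)=(\SL_n(\R)^{n-1},\,\Delta(\SL_n(\R)),\,\SL_n(\mathcal{I}_E))$. First I would record the standing hypotheses: $\Gamma$ is an irreducible lattice in $G$ via the $n-1$ real embeddings of $E$ (Borel--Harish-Chandra, Margulis), the diagonal $H$ is semisimple so $X=G/H$ carries a $G$-invariant measure, and by Howe--Moore applied to each simple factor of $G$ the $\Gamma$-action on $G/H$ is ergodic. Consequently $\kappa_\Gamma(x,x_0)$ is almost surely a constant $\kappa_\Gamma(X)$, and it suffices to prove matching upper and lower bounds both equal to $(n-2)(n+1)/n$.

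For the upper bound I would invoke the quantitative duality principle, which reduces matters to a rate of convergence in the mean ergodic theorem for averages over $H$-balls acting on $L^2(\Gamma\backslash G)$. Granting temperedness of this $H$-representation (see below), the general upper bound specializes to $\kappa_\Gamma(X)\le\dim(G/H)/a(H)$, where $a(H)$ is the polynomial volume growth exponent of $\{h\in H:\,\|h\|\le R\}$ in the matrix-operator norm inherited from $\SL_n(\R)^{n-1}$. A short Cartan-decomposition computation on $\SL_n(\R)$ --- maximizing $\sum_{i<j}(t_i-t_j)$ subject to $t_1\le\log R$, $t_1\ge\cdots\ge t_n$ and $\sum t_i=0$, with extremum at $t_1=\cdots=t_{n-1}=\log R$, $t_n=-(n-1)\log R$ --- yields $a(H)=n(n-1)$. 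Combined with $\dim(G/H)=(n^2-1)(n-2)$, this produces
\[
\kappa_\Gamma(X)\le\frac{(n^2-1)(n-2)}{n(n-1)}=\frac{(n-2)(n+1)}{n}.
\]

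The matching lower bound (the ``best possible'' part) requires that the $H$-representation on $L^2(\Gamma\backslash G)$ be tempered. Adelically, $\Gamma\backslash G$ is identified (modulo non-archimedean factors) with a component of $\SL_n(E)\backslash\SL_n(\mathbb A_E)$, and $H\cong\SL_n(\R)$ embeds diagonally inside the archimedean factor $\prod_{v\mid\infty}\SL_n(E_v)\cong\SL_n(\R)^{n-1}$. Granted archimedean temperedness of each factor of the automorphic spectrum of $\SL_n(E)$, the tensor product of the $n-1$ tempered archimedean components remains tempered when restricted to the diagonal $H$ (in fact its matrix coefficients decay at least like $\Xi_{\SL_n(\R)}^{\,n-1}$), and the paper's general lower-bound theorem then forces the matching inequality.

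The main obstacle I expect is precisely the temperedness input: archimedean-temperedness of the whole automorphic spectrum (cuspidal, residual, and Eisenstein) of $\SL_n$ over a totally real number field of degree $\ge 2$ is a nontrivial statement from the theory of automorphic forms, and is exactly the ``explicit spectral condition'' advertised in the introduction. Once temperedness has been verified through the paper's spectral-gap machinery for arithmetic lattices in $\SL_n$, the diagonal-restriction step and the volume-growth computation are routine bookkeeping with the Cartan decomposition, producing the exact exponent $(n-2)(n+1)/n$.
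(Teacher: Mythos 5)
Your setup, the identification $d=\dim(G/H)=(n-2)(n^2-1)$, and the Cartan-decomposition computation $a(H)=n(n-1)$ for the diagonal copy of $\SL_n(\R)$ all agree with the paper, and the final arithmetic $(n-2)(n^2-1)/(n(n-1))=(n-2)(n+1)/n$ is correct. The genuine gap is in how you propose to obtain temperedness of the restriction of $L^2_0(\Gamma\backslash G)$ to $H=\Delta(\SL_n(\R))$. You ask for ``archimedean temperedness of each factor of the automorphic spectrum of $\SL_n(E)$,'' i.e.\ essentially the generalized Ramanujan conjecture at the archimedean places; this is open for cusp forms and is actually false for the full spectrum (the residual and continuous parts of $L^2(\SL_n(E)\backslash\SL_n(\mathbb{A}_E))$ contain non-tempered constituents), so the argument as proposed cannot be completed. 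The paper's route (Theorem \ref{tensor power}, the tensor power argument) needs much less: since $\SL_n(\R)$ with $n\ge 3$ has higher rank, \emph{every} unitary representation without invariant vectors is strongly $L^{2(n-1)+\eta}$ --- an unconditional uniform bound due to Howe, Li and Oh --- and since $\Gamma$ is irreducible, Howe--Moore forces every irreducible constituent of $L^2_0(\Gamma\backslash G)$ to be a tensor product $\pi_1\otimes\cdots\otimes\pi_{n-1}$ of $n-1$ such representations. Restricted to the diagonal, the $K$-finite matrix coefficients are products of $n-1$ functions each in $L^{2(n-1)+\eta}(H)$, hence lie in $L^{2+\eta}(H)$ by H\"older, and temperedness follows from the Cowling--Haagerup--Howe criterion. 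No automorphic input beyond irreducibility of the lattice (which restriction of scalars supplies) is required.

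A secondary point: you assert that the matching \emph{lower} bound requires temperedness. It does not --- the lower bound $\kappa\ge d'/a$ of Theorem \ref{thm:lower} is a purely geometric pigeonhole estimate using only the volume growth of $H_t$ and the local dimension of $X$. Temperedness is what brings the spectral \emph{upper} bound of Theorem \ref{thm:upper} down to $d/a$ so that the two coincide; in your division of labour the conditional step is attached to the wrong half of the argument.
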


\subsection{Covering homogeneous spaces}
Finally, let us note that in all of the previous examples concerning the varieties $X\simeq G/H$, 
if $L$ is any non-compact semisimple subgroup of $H$, then the lattice $\Gamma$ acting with the best
possible Diophantine exponent on $X$ also acts with the best possible Diophantine exponent on 
the cover $\tilde X=G/L$. The exponents themselves, however, are generally different. 
We refer to Section \ref{sec:proofs} for the proof of this and the above results.

\subsection{Organisation of the paper.}
In Section \ref{sec:set}, we introduce a general set-up where our arguments apply
 and in Section \ref{sec:gen}  we develop a framework for estimating the approximation exponent
of a lattice group $\Gamma$ acting on homogeneous space $G/H$. In particular, 
we show that this problem reduces to understanding the spectral decomposition of $H$ acting on
$L^2(\Gamma\backslash G)$. In Section \ref{sec:spec}, we discuss the necessary spectral estimates.
Finally, in Section \ref{sec:ex}, we prove the results stated in the introduction.

\section{Definitions, notation and general set-up}\label{sec:set}
\subsection{Very brief overview}
Given a lattice subgroup $\Gamma$ of a locally compact second countable (lcsc)  $\sigma$-compact group $G$, and a closed subgroup $H\subset G$, we consider the case where $\Gamma$ acts ergodically on the homogenous space $G/H$,  with respect to the unique $G$-invariant measure class. 
It then follows that almost every orbit of $\Gamma$ on $G/H$ is dense, and we will develop a quantitative gauge to measure the rate at which denseness is achieved.  An important part of the motivation for this problem is that it includes a broad collection of classical Diophantine approximation problems, as demonstrated in the previous section and will be further demonstrated below.


Our approach is to reduce the study of ergodic properties of lattice orbits on the homogeneous space $G/H$, via the quantitative duality principle developed in \cite{GN3}, to the ergodic properties of the orbits of the stability group $H$ in the space $\Gamma \setminus G$. 

The main tool we will use to study the $H$-orbits in $\Gamma\setminus G$ 
is a quantitative mean ergodic theorem for a family of averages on $H$, acting in  $L^2(\Gamma\setminus G)$. In particular, it is the rate 
of convergence in the mean ergodic theorem for these averages on the stability group $H$ which determines the rate of approximation by lattice orbits on the homogeneous variety $G/H$. Under favorable conditions, it is possible to establish the best possible spectral estimate for these averages, and this makes it possible to establish  the best possible rate of distribution 
of almost every dense orbit.

\subsection{General set-up}\label{sec:gen-set-up}
Continuing with the notation of the previous section,  we fix a discrete lattice subgroup  $\Gamma\subset G$, and $G$ being unimodular, we denote a choice of Haar measure on $G$ by $m_G$. 
Let $H\subset G$ be a closed unimodular subgroup, and choose Haar measure $m_H$ on $H$. It follows that the homogeneous space $G/H=X$ carries a $G$-invariant Radon measure, unique up to multiplication by a positive scalar. We denote by $m_X$ the unique $G$-invariant  measure on $X$ satisfying, for every compactly supported continuous function $f$ on $G$  :
$$\int_G fdm_G(g)=\int_{X}\left(\int_Hf(gh)dm_H(h)\right) dm_X(gH)\,.$$

Let $Y=\Gamma\setminus G$ be the homogeneous space determined by the lattice subgroup, endowed with a
finite $G$-invariant measure. We denote by $m_{Y}=m_{\Gamma\setminus G}$ the unique $G$-invariant measure having the following relation with Haar measure $m_G$. For every compactly supported continuous function $f$ on $G$ :
$$\int_G f(g)dm_G(g)=\int_{\Gamma\setminus G}\left(\sum_{\gamma\in \Gamma}f(\gamma g)\right) dm_{\Gamma\setminus G}(g)\,.$$
Note that this choice of the invariant  measure $m_{Y}$ is not necessarily a probability measure, and its total mass $V(\Gamma)$ is equal to the Haar measure  of a fundamental domain 
of $\Gamma$ in $G$. Thus $m_{Y}(Y)=V(\Gamma)$, and we denote by $\widetilde{m}_{Y}$ the probability measure 
$m_{Y}/V(\Gamma)$.

We fix  a metric $\text{dist} $ on $X$ which satisfies the following regularity property:

\vspace{0.1cm}

\noindent {\bf Assumption 1:} {\it coarse metric regularity.}
 For every compact $\Omega\subset G$ and every compact $S\subset X$, there exists a
 constant $C_0(\Omega,S)>0$ such that for all $g\in \Omega$ and $x,x_0\in S$,
 \begin{equation}\label{eq:bounded-distortion}
 \text{dist} (gx,gx_0)   
\le C_0(\Omega,S)\, \text{dist} (x,x_0)\,.
\end{equation}

We  let $D : G\to \R_+$ be a proper continuous function, and set also $\abs{g}_D=e^{D(g)}$. 
We will assume that  right and left multiplication in $G$ produce only a bounded distortion of $D$,
namely:

\vspace{0.1cm}

\noindent {\bf Assumption 2:} {\it coarse norm regularity.}
For every compact $\Omega\subset G$, there exists a
constant $b(\Omega)>0$ such that for all $u,v\in \Omega$ and $g\in G$,
\begin{equation}\label{eq:distortion} D(ugv)\le D(g)+b(\Omega)\,.\end{equation}
   It then follows 
 that $\abs{ugv}_D\le  e^{b(\Omega)} \abs{g}_D$.

This condition is certainly satisfied if $D$ is coarsely subadditive, 
namely, if  for all 
$g_1,g_2\in G$,
\begin{equation*}D(g_1 g_2)\le D(g_1)+D(g_2)+b\end{equation*}
 with fixed $b>0$. 

We remark that when $F=\R, \C$  and $X\subset F^n$ is a homogeneous space embedded in $F^n$, condition
(\ref{eq:distortion}) will allow us to take any vector-space norm on $\Matn_n(F)$ and restrict it to $G$, not just a submultiplicative norm. This will be convenient in several applications. A similar remark applies to general locally compact fields.

We consider the family of sets $G_t=\{g\in G\,;\, D(g) \le t\}$, which are sets of positive finite Haar
measure on $G$ for sufficiently large $t$.
Their intersection with $H$, namely $H_t=\{h\in H\,;\, D(h) \le t\}$,  are sets of positive
finite Haar measure  in the unimodular subgroup $H$ when $t$ is sufficiently large.

It follows from (\ref{eq:distortion}) that the sets $G_t$ have the following stability property : for
every compact $\Omega\subset G$, there exists a constant $c_1=c_1(\Omega) > 0$ such that
\begin{equation}\label{eq:coarse-admissibility} \Omega G_t \Omega \subset G_{t+c_1}
\end{equation}
for all $t \ge  t_\Omega$.


We now turn to describe the parameters that appear naturally in the estimates of the exponents in the Diophantine approximation problems that will be the main subject of the present paper. 

\subsection{Approximation on homogeneous spaces}\label{sec:definitions}
Fix any point $x_0\in X$, and another point $x\in X$ with the orbit $\Gamma\cdot x$ dense in $X$. We consider the problem of quantitative approximation of the point $x_0\in X$ by points in the orbit $\Gamma\cdot x$, namely establishing an  estimate of the form $d(\gamma^{-1} x,x_0)< \epsilon$ with $\abs{\gamma}_D< B\epsilon^{-\zeta}$, as $\epsilon\to 0$, with a fixed positive $\zeta$. 

Note that this problem is meaningful for any fixed dense orbit $\Gamma\cdot x$, but many possibilities
may arise in the generality discussed here. One is that every orbit of $\Gamma$ is dense in $X$, and
another is that only almost every orbit is dense, but a dense set of points $x\in X$ have non-dense
orbits. There could also be a dense set of points $x^\prime \in X$ where $\Gamma\cdot x^\prime$ is in
fact closed or even finite, although $\Gamma\cdot x$ is dense and $d(x^\prime,\gamma x)$ is arbitrarily small.

Let us now define the exponent of approximation $\kappa$  for dense orbits in the action of $\Gamma$ in its action on (the general metric space) $X$. 

\begin{Def}\label{def:DE} 
{\rm
Given a point $x\in X$ with a dense orbit and a general point $x_0\in X$, assume that there exist  
$\zeta<\infty$ and $\epsilon_0=\epsilon_0(x,x_0,\zeta) > 0$ 
such that for all $\epsilon \in (0, \epsilon_0)$, 
the system of inequalities 
$$
\text{dist}(\gamma^{-1} x, x_0)\le \epsilon\quad\text{and}\quad \abs{\gamma}_D \le  \epsilon^{-\zeta}
$$ 
has a solution $\gamma\in \Gamma$.
Define the Diophantine
approximation  exponent $\kappa_\Gamma(x,x_0)$ as the infimum of $\zeta > 0$ such that the foregoing inequalities have 
solutions as stated.
}
\end{Def}

Let us note the following fundamental, but easily verifiable fact : under the assumptions
(\ref{eq:bounded-distortion}) and (\ref{eq:distortion}) on the gauge $D$, the exponent $\kappa(x,x_0)$ is a {\it $\Gamma\times \Gamma$-invariant function}. Hence when $\Gamma$ is ergodic on $G/H$, it is equal to a constant almost surely, which we will denote by $\kappa_\Gamma(G/H)$.

Our goal is to give explicit upper and lower estimates of the approximation exponent $\kappa_\Gamma(G/H)$, and to that end we introduce the following natural  (and necessary) assumptions, which as we shall see below are satisfied in great generality.

\vspace{0.1cm}

\noindent {\bf Assumption 3:} {\it coarse exponential volume growth.} 
There exist constants  $0 < a^\prime\le a < \infty$ such that for every $\eta > 0$ and a constant
$C_2(\eta)\ge 1$, we have 
\begin{equation}\label{eq:coarse-volume} C_2(\eta)^{-1} e^{t(a^\prime-\eta)}\le m_H(H_t)\le C_2(\eta) e^{t(a+\eta)}\end{equation}
as $t\ge t_\eta$.
Equivalently,
$$
\limsup_{t\to \infty} \frac1t \log m_H(H_t)\le a\quad\hbox{and}\quad\liminf_{t\to \infty} \frac1t \log
m_H(H_t)\ge a^\prime
$$
for finite and positive $a, a'$.  We note that 
because of property (\ref{eq:distortion}) replacing $H$ by a conjugate $gHg^{-1}$  will not affect the values of $a,a^\prime$, and that the constant $C_2(\eta)$ will vary uniformly as $g$ ranges over a compact set. 

Typically, in our examples we will have $a=a^\prime$, and often we will be able to even assert that as $t\to\infty$,
\begin{equation}\label{eq:vol-asyp}
m_H(H_t)=At^re^{at}+o\left(t^{r}e^{at}\right) \text{ for some  $A>0$ and $r\ge  0$.}
\end{equation} This sharper estimate will play a role later on. 

Anticipating some of our later considerations, we note that in \cite{GGN4} we will also allow in (\ref{eq:vol-asyp}) the case where $a=a^\prime=0$, namely where volume growth in $H$ is polynomial. 

\vspace{0.1cm}

\noindent {\bf Assumption 4:} {\it local dimension.} For the family of neighborhoods
$\mathcal{O}_\epsilon(x_0)=\{x\in X\,;\, \text{dist}(x,x_0) < \epsilon\}$, there exist constants $0 <
d^\prime\le d< \infty$ 
such that for every $\eta>0$ and a constant $C_3(x_0,\eta)\ge 1$, we have
\begin{equation}\label{eq:local-dim} C_3(x_0,\eta)^{-1}  \epsilon^{d+\eta}\le m_{X}(\mathcal{O}_\epsilon(x_0) )\le C_3(x_0,\eta)\epsilon^{d^\prime-\eta}
\end{equation}
as $\epsilon \in (0,\epsilon_0(x_0,\eta))$. The constant
$d(x_0)= \limsup_{\epsilon\to 0}\frac{\log(m_{X}(\mathcal{O}_\epsilon(x_0)))}{\log \epsilon}$ 
is called the upper local dimension of $X$ at $x_0$, and the constant
$d^\prime(x_0)= \liminf_{\epsilon\to 0}\frac{\log(m_{X}(\mathcal{O}_\epsilon(x_0)))}{\log \epsilon}$
is called the lower local dimension of $X$ at $x_0$.  Namely, we assume that both 
these dimensions are finite and positive.
Moreover, we note that it follows from property (\ref{eq:bounded-distortion})    
that as $x_0$ varies in compact subset of $X$, the upper and lower local dimension as well as $C_3$ and $\epsilon_0$ above satisfy uniform bounds. 

Clearly, when $X$ is a real connected manifold  $d=d^\prime=\dim_\R (X)$ at every point, and we can also take then $\eta=0$ in (\ref{eq:local-dim}). Typically in the examples we will consider, $d$ and $d^\prime$ will be constant on $X$ and equal.

\vspace{0.1cm}

\noindent {\bf Assumption 5:} {\it  quantitative mean ergodic theorem.}
We  consider the probability $G$-invariant measure $\tilde{m}_{Y}$ on $Y=\Gamma\setminus G$ and define bounded operators 
$\pi_Y(\beta_t)$ on $L^2(Y,\tilde m_Y)$ given by
$$\pi_Y(\beta_t)f(y)=\frac{1}{m_H(H_t)}\int_{H_t}f(yh)dm_H(h)\,\,,\,\, y\in Y\,.$$
We assume that there
exists $\theta > 0$ such that for every $\eta > 0$ there exists a constant $C_4(\eta)>0$ such that
\begin{equation}\label{eq:erg-thm}
\left\|\pi_Y(\beta_t)f-\int_{Y} f \, d\tilde{m}_{Y} \right\|_{L^2(Y,\tilde m_Y)}\le C_4(\eta)
m_H(H_t)^{-\theta +\eta}\|f\|_{L^2(Y,\tilde m_Y)}
\end{equation}
as $t\ge t_\eta$.

We assume that replacing $H$ by a conjugate subgroup $gHg^{-1}$ will not change the validity of this estimate or the value of $\theta$, and that the  constant $C_4(\eta)$ will vary uniformly as $g$ ranges over a compact set. 
We refer to \cite{GN1} for proofs of the ergodic theorem (\ref{eq:erg-thm}) for an extensive class of examples. 

Note that the validity of the quantitative mean ergodic theorem for the averaging operators $\pi_Y(\beta_t)$,  
 implies of course the ergodicity of $H$ on $\Gamma \setminus G$. By the duality principle,  the ergodicity of $\Gamma $ on $G/H$ follows, and in particular, almost every $\Gamma$-orbit in $X=G/H$ is dense.

To study the distribution of  dense lattice orbits in the homogeneous space $X=G/H$, we will need to utilize a Borel measurable section 
$\mathsf{s}_X : G/H \to G$. Letting $\mathsf{p}_X : G \to G/H$ denote the canonical projection, we have $ \mathsf{p}_X\circ \mathsf{s}_X= I_X$, and for each $g\in G$, we have 
$g=\mathsf{s}_X(gH) \mathsf{h}_X(g)$, where $\mathsf{h}_X : G\to H$ is determined by the section $\mathsf{s}_X$, and is Borel measurable. 
Furthermore, note that $\mathsf{h}_X$ is $H$-equivariant on the right, namely $\mathsf{h}_X(gh)=\mathsf{h}_X(g)h$, for $g\in G$ and $h\in H$. 
We assume also that the section is bounded on compact sets in $X$, and that it is continuous in a sufficiently small neighborhood of $x_0$. In our considerations we will assume that $x_0$ is the point in $X=G/H$ with stability group $H=H_{x_0}$, and so that $\mathsf{s}_X([H])=e=\mathsf{s}_X (x_0)$.  

 Note that using the section we obtain a measure-theoretic isomorphism 
between  $\mathsf{p}^{-1}_X(\mathcal{O}_\epsilon(x_0))$ and the direct product $\mathcal{O}_\epsilon(x_0)\times H$, with Haar measure on $G$ taken on the left, and on the right the product of the invariant measure $m_X$ on $X$ (restricted to $\mathcal{O}_\epsilon(x_0))$ and Haar measure $m_H$ on $H$. 

Let $A \subset X$ be a bounded open set, so that $\mathsf{s}_X(A)H_t\subset G$ are bounded sets 
in $G$. Then we claim that 
there exist constants $C'_1=C^\prime_1(A)>0$  and $c'_1=c^\prime_1(A)>0$ such that for all $t > t_{A}$,
\begin{equation}\label{eq:coarse-count}
 \abs{\Gamma\cap
   \left(H_t\mathsf{s}_X(A)^{-1} \right)}\le 
C_1^\prime\, m_H(H_{t+c_1^\prime})\,. 
 \end{equation}
Indeed, taking a sufficiently small neighbourhood $\Omega$ of identity in $G$ such that its
$\Gamma$-translates are disjoint, we conclude that
$$
 \abs{\Gamma\cap
   \left(H_t \mathsf{s}_X(A)^{-1}\right)}\le
\frac{m_G(\left(H_t\mathsf{s}_X(A)^{-1} \right)\Omega)}{m_G(\Omega)},
$$
and it follows from property (\ref{eq:coarse-admissibility}) that 
$$
\left(H_t\mathsf{s}_X(A)^{-1}\right)\Omega\subset H_{t+c_1'} \mathsf{s}_X(A')^{-1}
$$
for some bounded $A'=A'(\Omega,A) \subset X$ and $c_1'=c_1'(\Omega,A)>0$. This implies the estimate (\ref{eq:coarse-count}).

 We note that that replacing $H$ by a conjugate group $gHg^{-1}$ will not affect the validity of this
 estimate, and that the constants  $C_1^\prime(A)$ and $c_1^\prime(A)$  will vary uniformly as $g$ ranges over a compact set. 
 

For a fixed compact set $\Omega^0\subset H$, we denote 
$$\widetilde{\Omega}_\epsilon(x_0)=\mathsf{s}_X(\mathcal{O}_\epsilon(x_0))\cdot \Omega^0\subset G\,,$$
 namely 
$\widetilde{\Omega}_\epsilon(x_0)$ is obtained by lifting the neighborhood $\mathcal{O}_\epsilon(x_0)$ of $x_0\in X$ to $G$ via the section $\mathsf{s}_X$, and then 
multiplying it by a compact set  $\Omega^0\subset H$. Note that for $\epsilon < 1$ (say) the union of  the sets $ \widetilde{\Omega}_\epsilon(x_0)$ is contained in a fixed compact set in $G$, which we denote by $\Omega$.

 We define the associated characteristic function $\chi_\epsilon (g)=\chi_{\widetilde{\Omega}_\epsilon(x_0)}(g)$, which has compact support, and we consider its periodization under $\Gamma$, given by 
$f_\epsilon(\Gamma g)=\sum_{\gamma\in \Gamma}\chi_\epsilon(\gamma g)$. Clearly $f_\epsilon$ is bounded and has compact support in $Y=\Gamma\setminus G$.  

Now note that given a bounded subset $L\subset G$, 
since the intersection of the lattice $\Gamma$ with the bounded set $L\cdot L^{-1}$ is finite and has at most $N(L)$ elements (say), the map $g\mapsto \Gamma g$ is at most $N(L)$-to-$1$ on $L$.  In particular this is valid for  $\widetilde{\Omega}_\epsilon(x_0)$, and it follows that $m_{Y}(\Gamma\widetilde{\Omega}_\epsilon(x_0))\ge N(\Omega)^{-1}m_G(\widetilde{\Omega}_\epsilon(x_0))$. By definition of the measure $m_{Y}$, we also have 
$\int_Y f_\epsilon dm_{Y}= \int_G \chi_\epsilon dm_G= m_G(\widetilde{\Omega}_\epsilon(x_0))$. Furthermore, by its definition the support of the function $f_\epsilon$ in $Y$ is $\Gamma\widetilde{\Omega}_\epsilon(x_0)$, and on its support it is bounded by $N(\Omega_\epsilon(x_0))\le N(\Omega)$.  We can therefore conclude that 
$$\int_Y f^2_\epsilon dm_{Y}\le \int_Y N(\Omega)^2\chi_{\Gamma\widetilde{ \Omega}_\epsilon(x_0)} dm_{Y}\le N(\Omega)^2 m_G(\widetilde{\Omega}_\epsilon(x_0))\,,$$
so that 
\begin{equation}\label{eq:cover}
\norm{f_\epsilon}_{L^2(Y,m_Y)}\le N(\Omega)\sqrt{m_G(\widetilde{\Omega}_\epsilon(x_0))}\,.
\end{equation}

With all the preliminaries in place, we now turn to stating and proving our main approximation results. 

\section{Bounding the exponent approximation for  dense orbits}\label{sec:gen}

\subsection{A lower bound for the approximation exponent}

In the present section 
we will consider a homogeneous space $X=G/H$ and use the bound (\ref{eq:local-dim}) for the lower local
dimension of $X$ and the upper bound (\ref{eq:coarse-volume}) for rate of volume growth of the sets $H_t$
to prove a lower bound on the rate of approximation for almost every point by a dense $\Gamma$-orbit in $X$. 
\begin{Theorem}\label{thm:lower}
Let $G$ be an lcsc group, $H$ a closed subgroup, $\Gamma$ a discrete lattice in $G$ acting ergodically on $X=G/H$. 
Suppose that assumptions 1--4 stated in \S \ref{sec:gen-set-up} and \S\ref{sec:definitions} are
satisfied. In particular, $a$ denotes the volume growth exponent in (\ref{eq:coarse-volume}),
and $d'$ denotes the lower local dimension in (\ref{eq:local-dim}).
Then for $x\in X$ with a dense $\Gamma$-orbit, and for almost every  $x_0\in X$,  the exponent of approximation 
satisfies 
$$\kappa_\Gamma(x,x_0)=\kappa_\Gamma(X)\ge \frac{d^\prime}{a}\,.$$
\end{Theorem}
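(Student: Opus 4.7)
The strategy is a Borel--Cantelli type argument. Fix any $\zeta<d'/a$ and a bounded set $K\subset X$, let $\epsilon_n=2^{-n}$, and define
$$
F_n=\{x_0\in K\,:\,\exists\,\gamma\in\Gamma\text{ with }\dist(\gamma^{-1}x,x_0)\le\epsilon_n\text{ and }\abs{\gamma}_D\le\epsilon_n^{-\zeta}\}.
$$
Since the relation $\kappa_\Gamma(x,x_0)\le\zeta$ requires the system to be solvable for \emph{all} sufficiently small $\epsilon$, we have
$$
\{x_0\in K:\kappa_\Gamma(x,x_0)\le\zeta\}\ \subset\ \bigcup_{N\ge 1}\bigcap_{n\ge N}F_n,
$$
so the proof reduces to showing that $m_X(F_n)\to 0$ for every such $\zeta$ and $K$.

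To bound $m_X(F_n)$, I would cover $F_n$ by the balls $\mathcal O_{\epsilon_n}(\gamma^{-1}x)$ where $\gamma\in\Gamma$ ranges over the lattice elements with $\abs{\gamma}_D\le\epsilon_n^{-\zeta}$ and $\gamma^{-1}x$ in a fixed bounded thickening $K'$ of $K$. The upper half of Assumption~4, combined with the coarse metric regularity of Assumption~1 (which makes the local-dimension estimate uniform over $K'$), gives that each ball has $m_X$-measure at most $C_3(\eta)\epsilon_n^{d'-\eta}$, whence
$$
m_X(F_n)\ \le\ N_n\cdot C_3(\eta)\,\epsilon_n^{d'-\eta},
$$
where $N_n$ counts the admissible $\gamma$'s.

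To count, write $x=g_x\cdot[H]$; the conjugation-invariance built into Assumptions~2 and~3 lets me treat the bounded factor $g_x$ as a harmless compact translate in the volume argument underlying~(\ref{eq:coarse-count}). The geometric condition $\gamma^{-1}x\in K'$ then becomes $\gamma\in g_xH\,\mathsf{s}_X(K')^{-1}$, and together with $D(\gamma)\le-\zeta\log\epsilon_n=:t_n$ the coarse subadditivity of $D$ from Assumption~2 forces the $H$-component into $H_{t_n+O(1)}$. The lattice point count~(\ref{eq:coarse-count}) and the upper volume-growth estimate of Assumption~3 then yield $N_n\le C'\,m_H(H_{t_n+O(1)})\le C''\epsilon_n^{-\zeta(a+\eta)}$. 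Multiplying the two estimates gives
$$
m_X(F_n)\ \le\ C\,\epsilon_n^{d'-\eta-\zeta(a+\eta)},
$$
and since $\zeta a<d'$ one can choose $\eta>0$ small enough that the exponent is positive, so $m_X(F_n)\to 0$. A countable union over a sequence $\zeta_k\nearrow d'/a$ and an exhaustion of $X$ by bounded sets shows that $\{x_0\in X:\kappa_\Gamma(x,x_0)<d'/a\}$ is $m_X$-null, and the $\Gamma\times\Gamma$-invariance of $\kappa_\Gamma(x,\cdot)$ combined with ergodicity of $\Gamma$ on $G/H$ upgrades this to the constant lower bound $\kappa_\Gamma(X)\ge d'/a$.

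The only step requiring genuine care is the bookkeeping in the third paragraph: marrying the section-based description of $\mathcal O_{\epsilon_n}(x_0)$ via $\mathsf{p}_X^{-1}$, the coarse-norm distortion from Assumption~2, and the lattice-point bound~(\ref{eq:coarse-count}) so that everything collapses to a count inside $H_{t+O(1)}$ without losing the optimal exponent. Once this is in place, the ratio $d'/a$ emerges directly as the balance between the two competing rates in Assumptions~3 and~4; no spectral input (the heavier ingredient, reserved for the matching upper bound) enters at this stage.
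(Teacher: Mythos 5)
Your argument is correct and rests on exactly the same two ingredients as the paper's proof: the reduction of $\{\gamma\in\Gamma:\,D(\gamma)\le t,\ \gamma^{-1}x\in A\}$ to a lattice count in $H_{t+O(1)}\mathsf{s}_X(A)^{-1}$ bounded by $m_H(H_{t+O(1)})\ll e^{t(a+\eta)}$, and the upper bound $m_X(\mathcal{O}_\epsilon)\ll\epsilon^{d'-\eta}$ coming from the lower local dimension. The only difference is presentational: the paper runs the dual \emph{packing} argument (a positive-measure set of well-approximable targets contains $\gg\epsilon^{-d'+\eta}$ many $2\epsilon$-separated points, each forcing a distinct orbit point, contradicting the orbit count by pigeonhole), whereas you run the \emph{covering} version (the well-approximable set is covered by $\ll\epsilon^{-\zeta(a+\eta)}$ balls of measure $\ll\epsilon^{d'-\eta}$, so its measure tends to zero). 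These are equivalent, and your bookkeeping of the section, the conjugation by $g_x$, and the $\le$ versus $<$ issue in the definition of $\kappa_\Gamma$ all check out.
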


\begin{proof}
 We fix $x\in X$ with a dense $\Gamma$-orbit, and assume in our discussion below that $H$ is the stability group of $x\in X$.
This amounts to replacing $H$ by conjugate subgroups $g_xHg_x^{-1}$ with some $g_x\in G$, 
for which (\ref{eq:coarse-volume}) is still valid, with constants depending on $x$ and varying uniformly as $x$ varies in a
compact set. 

Given a bounded non-empty open set $A\subset X$, we consider its lift to $G$ via the section $\mathsf{s}_X$, namely the bounded set 
$\mathsf{s}_X(A)$. Note that for $\gamma\in \Gamma$  the conditions $D(\gamma) \le t$ and $\gamma^{-1}
x\in A$  imply that $\gamma\in  H_{t+c_1} \mathsf{s}_X(A)^{-1} $ for some
$c_1=c_1(A)>0$.
Indeed,  $\gamma^{-1}=\mathsf{s}_X(\gamma^{-1} H)\mathsf{h}_X(\gamma^{-1})$ where of course $\mathsf{s}_X(\gamma^{-1} H)\in 
  \mathsf{s}_X(A)$. Furthermore, since $ \mathsf{s}_X(A)$ is a fixed bounded set and
  $\gamma\in G_t$, it follows from (\ref{eq:coarse-admissibility}) that 
$$
\mathsf{h}_X(\gamma^{-1})=\mathsf{s}_X(\gamma^{-1} H)^{-1}\gamma^{-1} \in G^{-1}_{t+c_1}\cap H=H^{-1}_{t+c_1}.
$$ 
Thus $\gamma \in  H_{t+c_1}\mathsf{s}_X(A)^{-1}$.

The upper 
estimate 
we seek for the number of lattice points $\gamma\in \Gamma\cap G_t$ such that $\gamma^{-1} x$ falls in the bounded open set $A$
follow from the upper 
bound (\ref{eq:coarse-count})  on the number of lattice points in $H_{t+ c_1}\mathsf{s}_X(A)^{-1}$, and the upper 
volume growth bound (\ref{eq:coarse-volume}) of $H_t$, as follows:
\begin{align*}
\abs{\set{\gamma\in \Gamma\cap G_t\,;\, \gamma^{-1} x\in A}}
&\le \abs{\Gamma \cap (H_{t+c_1}\mathsf{s}_X(A)^{-1})}\\
&\ll_{A}  m_H(H_{t+c_1+c_1^\prime})\ll_{A,\eta} e^{t(a+\eta)}\,.
\end{align*}
This estimate holds for all $\eta>0$ and sufficiently large $t$.

Now assume that for some $x\in X$ with dense orbit, the set of $x_0$ where $\kappa(x,x_0) < d^\prime/a$ actually has positive measure. Then there exists $\kappa_0 < d^\prime/a$ such that the set $\set{x_0\in X\,;\, \kappa(x,x_0) \le \kappa_0} $ has positive measure. For each point $y$ in the latter set there exists $\epsilon(y)$ such that for all $\epsilon < \epsilon(y)$ there exists some $\gamma\in \Gamma$ with 
$\text{dist}(\gamma^{-1} x,y) \le \epsilon$ and 
$\abs{\gamma}_D \le \epsilon^{-\kappa_0}$. 

For a positive integers $j$, define 
$$A_{j}=\set{y\in X\,;\, \epsilon(y)\le \frac{1}{j}}$$
and then clearly for some $j_0$ the set $A_{j_0}$ has positive measure, and we can consider a bounded subset with positive measure, denoted by
$A^\prime_{j_0}$. 

We now claim that for every $\eta>0$ and sufficiently small $\epsilon>0$,
any bounded set $A$ of positive measure contains a $2\epsilon$-separated set of
cardinality at least $C_\eta\epsilon^{-d^\prime+\eta}$ for some $C_\eta>0$. Indeed, if $\set{y_i\,:\,
  i\in I}$ is a maximal $2\epsilon$-separated subset of $A$, every point in $A$ is at distance less than $2\epsilon$ from one of the points $y_i$.  Then the union over $i\in I$ of the $4\epsilon$-balls $\mathcal{O}_{4\epsilon}(y_i)$ centered at $y_i$ must cover $A$. Hence, the measure $A$ is bounded by 
$$0 < m_X(A)\le \sum_{i\in I}m_X(\mathcal{O}_{4\epsilon}(y_i ))\le \abs{I} \max_{y\in A} C_3(y,\eta)\epsilon^{d^\prime-\eta},$$
where we have used the lower local dimension estimate (\ref{eq:local-dim}) and its uniformity. Thus the size $\abs{I}$ of a maximal separated set satisfies the desired lower bound. 

Let us now apply the last assertion to the bounded positive measure set $A^\prime_{j_0}$, and for each
sufficiently small $\epsilon$ choose a $2\epsilon$-separated subset of cardinality at least
$C_\eta\epsilon^{-d^\prime+\eta}$. Then for all $\epsilon < 1/j_0$, the $\epsilon$-ball centered at each
of these $2\epsilon$-separated points must contain a point of the form $\gamma^{-1} x$ with
$\gamma\in\Gamma$ satisfying $\abs{\gamma}_D \le \epsilon^{-\kappa_0}$. 
These points in the orbit are distinct  
and their total number thus cannot exceed $C'_\eta\left(\epsilon^{-\kappa_0}\right)^{a+\eta}$. 
Thus for all sufficiently small $\epsilon$,
$$C_\eta\epsilon^{-d^\prime+\eta}\le C_\eta'\left(\epsilon^{-\kappa_0}\right)^{a+\eta}.
$$
This implies that $\kappa_0(a+\eta)\ge d'-\eta$ for every $\eta>0$ and hence
$\kappa_0 \ge d^\prime/a$, which contradicts our choice. 
\end{proof}

\begin{remark}
{\rm We note that in order to estimate the exponent for the case when the inequalities are of the form 
$$
\|\gamma x- x_0\|\le \epsilon\quad\text{and}\quad\|\gamma\|\le  \epsilon^{-\zeta},
$$ 
\noindent the same argument can be used. One merely needs to note that the condition $\gamma x \in A$ implies that $\gamma \in \mathsf{s}_X(A)H_{t+c_1}$ and moreover we have the following lattice point estimate, an analogue of $(2.8)$:
$$
\abs{\Gamma \cap (\mathsf{s}_X(A)H_{t})}
\ll_{A}  m_H(H_{t+c_1^\prime}).$$
\noindent The rest of the argument goes through exactly as above.}
\end{remark}

\subsection{An upper bound for the approximation exponent}
We now turn to prove an explicit upper bound on the approximation exponent by points in a dense lattice orbit, complementing the lower bound of the previous section. 

\begin{Theorem}\label{thm:upper}
Let $G$ be an lcsc group, $H$ a closed subgroup, $\Gamma$ a discrete lattice in $G$ acting ergodically on $X=G/H$. 
Suppose that assumptions 1--5 stated in \S \ref{sec:gen-set-up} and \S\ref{sec:definitions} are
satisfied. In particular, $a$ denotes the volume growth exponent in (\ref{eq:coarse-volume}),
$d$ and $d'$ denote the upper and lower local dimensions in (\ref{eq:local-dim}), and $\theta$ denotes the rate in the mean
ergodic theorem in (\ref{eq:erg-thm}).
Then for almost every $x\in X$ and for every $x_0\in X$, the exponent of approximation satisfies
$$
\kappa_\Gamma(x,x_0)=\kappa_\Gamma(X)
\le  \frac{d-d'/2}{a^\prime\theta}.  
$$
\end{Theorem}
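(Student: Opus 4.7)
The plan is to apply the quantitative mean ergodic theorem (Assumption 5) to the periodised characteristic function $f_\epsilon$ on $Y=\Gamma\backslash G$ constructed in \S\ref{sec:definitions} with lift $\widetilde{\Omega}_\epsilon(x_0)=\mathsf{s}_X(\mathcal{O}_\epsilon(x_0))\cdot\Omega^0$, and to convert the resulting $L^2$-bound into a pointwise positivity statement via Chebyshev's inequality and Borel--Cantelli. The key duality observation is: for a base point $y=\Gamma\mathsf{s}_X(x)\in Y$, positivity of $\pi_Y(\beta_t)f_\epsilon(y)$ is equivalent to the existence of $\gamma\in\Gamma$ and $h\in H_t$ with $\gamma\mathsf{s}_X(x)h\in\widetilde{\Omega}_\epsilon(x_0)$. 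Unpacking this via the section and the coarse stability \eqref{eq:coarse-admissibility} yields $\gamma\cdot x\in\mathcal{O}_\epsilon(x_0)$ and $D(\gamma)\le t+O(1)$, which is precisely the Diophantine inequality to solve (the $\gamma^{-1}$ version follows by inversion and \eqref{eq:distortion}).

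\textbf{Quantitative inputs.}
I would first estimate the two inputs to the ergodic theorem. By the factorisation $m_G\leftrightarrow m_X\otimes m_H$ on the tube $\widetilde{\Omega}_\epsilon(x_0)$, combined with the local dimension bounds \eqref{eq:local-dim},
$$
\bar f_\epsilon \;=\; \frac{m_G(\widetilde{\Omega}_\epsilon(x_0))}{V(\Gamma)} \;\ge\; c\,\epsilon^{d+\eta},
\qquad
\|f_\epsilon\|_{L^2(Y,\tilde m_Y)}^2 \;\le\; C\,\epsilon^{d'-\eta},
$$
where the second uses \eqref{eq:cover}. Plugging these into \eqref{eq:erg-thm} and applying Chebyshev to the sublevel set $B_t=\{y:\pi_Y(\beta_t)f_\epsilon(y)<\bar f_\epsilon/2\}$ gives
$$
\tilde m_Y(B_t) \;\lesssim\; m_H(H_t)^{-2\theta+2\eta}\,\epsilon^{d'-2d-O(\eta)}.
$$
Using the lower volume bound $m_H(H_t)\ge c\,e^{t(a'-\eta)}$ from \eqref{eq:coarse-volume} and setting $e^t=\epsilon^{-\zeta}$, the right-hand side becomes $\lesssim \epsilon^{2\zeta a'\theta+d'-2d-O(\eta)}$, which is summable over a geometric sequence $\epsilon_n=2^{-n}$ as soon as $\zeta>(d-d'/2)/(a'\theta)$ and $\eta$ is sufficiently small.

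\textbf{Borel--Cantelli and descent to $X$.}
For any such $\zeta$, Borel--Cantelli produces a $\tilde m_Y$-null set off of which $y\notin B_{t_n}$ for all large $n$, where $t_n=\zeta\log(1/\epsilon_n)$. Because $\pi_Y(\beta_t)f_\epsilon$ is $H$-invariant up to a bounded shift in $t$ (via \eqref{eq:coarse-admissibility}), the good set is essentially saturated under $H$ and hence descends to a conull subset of the double quotient $\Gamma\backslash G/H=\Gamma\backslash X$; by ergodicity of $\Gamma$ on $X$ this lifts to a conull set of $x\in X$. For each good $x$ and each large $n$, positivity at $\Gamma\mathsf{s}_X(x)$ yields $\gamma_n\in\Gamma$ and $h_n\in H_{t_n}$ with $\gamma_n\mathsf{s}_X(x)h_n\in\widetilde{\Omega}_{\epsilon_n}(x_0)$, whence $\gamma_n\cdot x\in\mathcal{O}_{\epsilon_n}(x_0)$ and $|\gamma_n|_D\le C\epsilon_n^{-\zeta}$ by \eqref{eq:coarse-admissibility} and \eqref{eq:distortion}. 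Interpolating between consecutive $\epsilon_n$'s (at the cost of a harmless multiplicative constant) extends the statement to all small $\epsilon$, and letting $\zeta\downarrow(d-d'/2)/(a'\theta)$ gives $\kappa_\Gamma(x,x_0)\le(d-d'/2)/(a'\theta)$; the value is then the a.s.\ constant $\kappa_\Gamma(X)$ by ergodicity.

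\textbf{Main obstacle.}
The most delicate step is the descent from $L^2$-control on $Y$ to a pointwise statement along the transversal $\mathsf{s}_X(X)\subset G$, which is $m_G$-null. The resolution exploits the compact $H$-thickening $\Omega^0$ built into $\widetilde{\Omega}_\epsilon(x_0)$ together with the approximate right-$H$-invariance of the averages $\pi_Y(\beta_t)$, both of which ensure that the bad set in $Y$ fibres over a bad set in $\Gamma\backslash G/H=\Gamma\backslash X$ of comparable relative measure, whose pullback to $X$ is controlled by the ergodicity of $\Gamma$.
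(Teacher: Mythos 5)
Your proposal is correct and follows essentially the same route as the paper: periodize the indicator of the tube $\widetilde{\Omega}_\epsilon(x_0)$, apply the quantitative mean ergodic theorem to $f_\epsilon$ with $\epsilon\asymp e^{-t/\zeta}$, bound the measure of the set where the average is too small, and run Borel--Cantelli, using that non-vanishing of $\pi_Y(\beta_t)f_\epsilon(\Gamma g)$ forces a solution $\gamma$ for $x=gH$. The only (harmless) cosmetic differences are that the paper bounds the measure of the exact zero set rather than the sublevel set $\{\pi_Y(\beta_t)f_\epsilon<\bar f_\epsilon/2\}$, and performs the Borel--Cantelli and descent on an exhaustion of $G$ by compact sets rather than on $Y$ followed by $H$-saturation.
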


\begin{proof}
Let us fix $x_0\in X$, and assume that $H$ is the stability group of $x_0$ in $G$.  We recall from \S
\ref{sec:definitions} the associated section $\mathsf{s}_X : G/H\to G$, and also the sets
$\mathcal{O}_\epsilon(x_0)$ and their lifts, which were chosen as 
$$
\widetilde{\Omega}_\epsilon(x_0)=\mathsf{s}_X(\mathcal{O}_\epsilon(x_0)) \cdot \Omega^0
$$
 with $\Omega^0\subset H$ open and bounded. Finally recall the family of functions $f_\epsilon\in L^2(Y)$ defined there.

Let us apply the averaging operators $\pi_Y(\beta_t)$ acting on $L^2(Y)$ to the function $f_\epsilon$ :
$$\pi_Y(\beta_t)f_\epsilon (\Gamma g)=\frac{1}{m_H(H_t)}\int_{H_t}\sum_{\gamma\in \Gamma}\chi_\epsilon(\gamma^{-1} gh)dm_H(h)\,.$$
To estimate the integrand, let us first determine where it is non-zero. By definition, $\chi_\epsilon(\gamma^{-1} gh)\neq 0$ implies that 
$$
\gamma^{-1} gh\in \widetilde{\Omega}_\epsilon(x_0)\quad \text{or}\quad \gamma\in gh\widetilde{\Omega}_\epsilon(x_0)^{-1}\,\,.$$
Note  that first, if $h\in H_t$ and $g$ ranges over a compact set $L\subset G$,  then $\gamma\in
gH_t  
\widetilde{\Omega}_\epsilon(x_0)^{-1}\subset G_{t+c_1}$ by  (\ref{eq:coarse-admissibility}). 
Second, since $\widetilde{\Omega}_\epsilon(x_0)=\mathsf{s}_X(\mathcal{O}_\epsilon(x_0) )\cdot
\Omega^0$ with $\Omega^0\subset H$,  we conclude that $\gamma^{-1}gH\in \mathcal{O}_\epsilon(x_0)$,
namely $\pi_Y(\beta_t)f_\epsilon (\Gamma g)\neq 0$ implies that $x=gH$ satisfies $\gamma^{-1}x\in
\mathcal{O}_\epsilon(x_0)$  with $\gamma\in \Gamma$ such that $D(\gamma) \le t+c_1$. 

The quantitative mean ergodic theorem (\ref{eq:erg-thm}) gives the estimate
\begin{equation*}
\norm{\pi_Y(\beta_t)f_\epsilon-\int_Y f_\epsilon\, d\tilde m_{Y}}_{L^2(Y,\tilde m_Y)}\le C_4(\eta)
m_H(H_t)^{-\theta +\eta}\norm{f_\epsilon}_{L^2(Y,\tilde m_Y)}
\end{equation*}
for all $\eta>0$ and sufficiently large $t$. We take $\epsilon=e^{-t/\zeta }$ with $\zeta > 0$.
Let us fix a compact set $L \subset G$ and define
$$L^{(t)}=\{g\in L\,;\, \pi_Y(\beta_t)f_{e^{-t/\zeta }}(\Gamma g)=0\}\,.$$
Integrating only on the subset $\Gamma L^{(t)} \subset Y$,  the previous estimate implies 
$$\tilde{m}_{Y}(\Gamma L^{(t)})^{1/2}\cdot \int_{Y} f_{e^{-t/\zeta } }\,d\tilde{m}_{Y}\le
C_4(\eta) m_H(H_t)^{-\theta+\eta}\|f_{e^{-t/\zeta }}\|_{L^2(Y,\tilde m_Y)}.$$

Using the bound (\ref{eq:local-dim}) on the upper local dimension of 
$\mathcal{O}_\epsilon(x_0)$ and the definition of the function $f_\epsilon$, we deduce that
for every $\eta>0$ and sufficiently small $\epsilon$, 
\begin{align*}
\int_Y f_\epsilon\, d\tilde
m_{Y}&=V(\Gamma)^{-1} m_G(\tilde{\Omega}_\epsilon(x_0))\\
&=V(\Gamma)^{-1} m_X(\mathcal{O}_\epsilon(x_0))
m_H(\Omega^0)\gg_{\Omega^0,\eta} \epsilon^{d+\eta}\,.
\end{align*}
Using (\ref{eq:cover}) and the bound (\ref{eq:local-dim}) on the upper local dimension of 
$\mathcal{O}_\epsilon(x_0)$, we also obtain
\begin{align*}
\|f_\epsilon\|_{L^2(Y,\tilde m_Y)} 
\le V(\Gamma)^{-1/2}N(\Omega)\sqrt{m_G(\tilde{\Omega}_\epsilon(x_0))}\ll_{\Omega,\Omega^0,\eta} 
\epsilon^{d^\prime/2-\eta/2}\,.
\end{align*}
Finally, we recall that by (\ref{eq:coarse-volume}) $m_H(H_t)\ge
C_2(\eta)^{-1}e^{(a^\prime -\eta)t}$ for all sufficiently large $t$. 
Putting these estimates together, we have 
\begin{align*}
\tilde m_{Y}(\Gamma L^{(t)})^{1/2}
&\le \left( \int_Y f_{e^{-t/\zeta} }dm_{Y}\right)^{-1} C_4(\eta) m_H(H_t)^{-\theta+\eta}\|f_{e^{-t/\zeta
  }}\|_{L^2(Y,\tilde m_Y)}\\
&\ll_{\Omega,\Omega^0,\eta}\epsilon^{-d-\eta}e^{-(\theta-\eta)(a'-\eta)t}\epsilon^{d^\prime/2-\eta/2}=
e^{-t(-d/\zeta+\theta a^\prime +d'/(2\zeta)-\rho)},
\end{align*}
where $\rho=\rho(\eta)>0$ can be made arbitrary small as $\eta\to 0$. Hence, the exponent can be made
positive provided that $\zeta >   \frac{d-d'/2}{a^\prime \theta}$.
We recall from \S\ref{sec:definitions} that
$$
m_G(L^{(t)})\le N(L) m_{Y}(\Gamma L^{(t)})=N(L)V(\Gamma) \tilde m_{Y}(\Gamma L^{(t)}).
$$
Now it follows from the above estimate that 
 $\sum_{n=1}^\infty m_G(L^{(n)}) < \infty$. Hence by Borel-Cantelli Lemma
almost every point $g\in L$ eventually  avoids $L_{n}$. Equivalently, for almost every $g\in L$ there exists $n(g)$ such that 
$\pi_Y(\beta_n)f_{e^{-n/\zeta }} (\Gamma g)\neq 0$ for all $n \ge n(g)$. As noted above, the non-vanishing  implies that there exists 
$$\gamma\in LH_{n} \tilde{\Omega}_{e^{-n/\zeta }}(x_0)^{-1}\subset G_{ n+c_1}$$
which satisfies $\gamma^{-1}gH\in \mathcal{O}_{e^{-n/\zeta }}(x_0)$. 
Hence, we have proven that there exists $c_1=c_1(L,x_0)>0$, uniform over $x_0$ in bounded sets,
such that  for almost all $g\in L$ and $n > n(g)$, there exists $\gamma\in \Gamma$ satisfying 
$d(\gamma^{-1}gH,x_0) < e^{-n/\zeta}$ with $\gamma$ satisfying $|\gamma|_D\le e^{c_1}e^n\ll
e^{(1+\eta)n}$ with $\eta>0$.  
This proves the statement for $\epsilon=e^{-n/\zeta}$ with $n\in\N$, and for general $\epsilon$
this follows by interpolation.
Finally, by taking $L=U_j$ where $U_j$ is an increasing sequence of subsets exhausting $G$ as $j\to \infty$,
we deduce that the same holds for almost every $g\in G$. 
 This proves that the approximation exponent $\kappa_\Gamma(x,x_0) $ satisfies the upper bound $\kappa_\Gamma(x,x_0) \le \frac{d-d'/2}{a^\prime\theta}$ for every $x_0\in X$ and almost every $x\in X$. 
\end{proof}

\begin{remark}
{\rm In order to study lower bounds for exponents for inequalities of the type
$$
\|\gamma x- x_0\|\le \epsilon\quad\text{and}\quad\|\gamma\|\le  \epsilon^{-\zeta},
$$
\noindent we need to study ergodic averages over $H_t^{-1}$ and therefore we need a mean ergodic theorem for such averages. The same argument then goes through with very minor modifications.
 }

\end{remark}

\subsection{Optimality of the approximation exponent.}  

It is a remarkable fact that when $d=d^\prime$, $a=a^\prime$, and $\theta=1/2$ the spectral upper bound
in Theorem \ref{thm:upper} in fact matches the a priori lower bound on the rate of approximation given in Theorem \ref{thm:lower}, which was derived from more elementary geometric counting arguments.

Retaining the assumptions of  Theorem \ref{thm:upper} and Theorem \ref{thm:lower}, and assuming in addition that $d=d'$ and $a=a^\prime$, we obtain the following sufficient condition for optimality of the rate of distribution of almost every dense lattice orbit on  a homogeneous space. 

\begin{Theorem}\label{thm:optimal}
\begin{enumerate}
\item If the rate of convergence in the mean ergodic theorem  (\ref{eq:erg-thm}) for the averages $\pi_{Y} (\beta_t)$ acting
  in $L^2(\Gamma\setminus G)$ is governed by the square root of the volume of $H_t$ (namely, bounded by
  $C_4(\eta)m_H(H_t)^{-\frac{1}{2}+\eta}$ for every $\eta > 0$), then the approximation exponent is $\kappa_\Gamma(X)=\frac{d}{a}$.
 
\item When $H$ is a connected non-compact semimple algebraic group defined over a local field of characteristic zero, a necessary and sufficient condition for the norm bound in (1) is that the restriction of the unitary representation on $L^2_0(\Gamma\setminus G)$ to $H$ have tempered spherical spectrum as a representation of $H$. 
\end{enumerate}
\end{Theorem}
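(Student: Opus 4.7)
For part (1), the argument is pure arithmetic. Combining the lower bound from Theorem \ref{thm:lower} with the upper bound from Theorem \ref{thm:upper} under the extra hypotheses $d=d'$, $a=a'$, and $\theta=1/2$ yields
\[
\frac{d}{a} \;=\; \frac{d'}{a} \;\le\; \kappa_\Gamma(X) \;\le\; \frac{d - d'/2}{a'\theta} \;=\; \frac{d/2}{a\cdot 1/2} \;=\; \frac{d}{a},
\]
so both bounds coincide and $\kappa_\Gamma(X) = d/a$ is optimal.

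Part (2) reduces to the by-now standard dictionary between temperedness of unitary representations of a semisimple group $H$ over a local field of characteristic zero and sharp $m_H(H_t)^{-1/2}$ operator-norm decay for bi-$K$-invariant averaging operators, developed by Cowling, Haagerup--Howe, Oh, and Nevo. I would argue the two directions separately. For \emph{sufficiency}, assume the restriction of the automorphic representation on $L^2(\Gamma\setminus G)$ to $H$ has tempered spherical spectrum on the orthogonal complement of the constants. The Cowling--Haagerup--Howe inequality then dominates every $K$-finite matrix coefficient by Harish-Chandra's $\Xi_H$-function, up to a polynomial factor in the dimension of the $K$-type. Using the asymptotic $\Xi_H(h)^2 \asymp m_H(H_{D(h)})^{-1}$ up to polynomial factors, and integrating the resulting pointwise bound against $\beta_t$, one obtains
\[
\biggnorm{\pi_Y(\beta_t)f - \int_Y f\,d\tilde m_Y}_{L^2(Y,\tilde m_Y)} \;\ll\; m_H(H_t)^{-1/2 + \eta}\|f\|_{L^2(Y,\tilde m_Y)}
\]
for every $\eta > 0$, i.e.\ the rate $\theta = 1/2$ in Assumption 5.

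For \emph{necessity}, assume the quantitative mean ergodic theorem holds with $\theta=1/2$. Then the same operator-norm bound holds for the restriction of $\pi_Y(\beta_t)$ to any $H$-invariant subspace of $L^2(\Gamma\setminus G)$ orthogonal to the constants, and in particular to every spherical subrepresentation $\sigma$ appearing in the decomposition. If $\sigma$ were non-tempered, Cowling's converse to the Kunze--Stein inequality would force $\|\sigma(\beta_t)\| \gg m_H(H_t)^{-1/2 + c_\sigma}$ with a positive gap $c_\sigma$ determined by the distance of $\sigma$ from the tempered dual, contradicting the hypothesis. Hence every spherical spectral component must be tempered.

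The main obstacle lies in the necessity direction, where the hypothesis supplies a norm estimate only for the specific exhausting family $\{\beta_t\}$, while temperedness is a statement about all bi-$K$-invariant test functions. The bridge is the coarse stability property (\ref{eq:coarse-admissibility}) together with the Cartan decomposition (real case) or the structure of double cosets $K \backslash H / K$ and Macdonald's spherical transform (non-archimedean case), which let one compare the $\beta_t$-averages to honest spherical functions on $H$ without losing the sharp exponent; the detailed argument would proceed along the lines of our earlier work on spectral characterization of optimal ergodic rates.
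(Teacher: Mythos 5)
Your proposal is correct and follows essentially the same route as the paper: part (1) is the immediate arithmetic substitution $d=d'$, $a=a'$, $\theta=1/2$ into Theorems \ref{thm:lower} and \ref{thm:upper}, and part (2) is the standard dictionary between tempered spherical spectrum and $m_H(H_t)^{-1/2+\eta}$ operator-norm decay via the Harish-Chandra $\Xi$-function, using bi-$K_H$-invariance of the sets $H_t$. The paper's own proof is in fact terser than yours — it states part (1) is immediate and for part (2) only records the sufficiency direction together with the observation that $H_t$ may be taken bi-$K_H$-invariant, leaving the necessity direction (your lower bound on $\|\sigma(\beta_t)\|$ for a non-tempered spherical constituent) to the standard spectral-transfer literature it cites.
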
 
 \begin{proof} Part (1) is of course immediate. As to part (2),  when the sets $H_t$ are bi-$K_H$-invariant, where $K_H$ is a maximal compact subgroup of $H$, it suffices that the spherical spectrum of $H$ in its representation on $L^2_0(\Gamma\setminus G)$ is tempered in order to obtain the desired norm bound. 
 Note however that in the problem of approximating a general point $x_0\in X$ by points in a  dense orbit $\Gamma\cdot x$, we can assume 
 that the sets $H_t$ are bi-invariant under a good maximal compact subgroup $K_H$, provided we  choose the original sets $G_t\subset G$ to be  bi-$K$-invariant, namely $KG_t K=G_t$, where $K$ is a suitable maximal compact subgroup of $G$, and $K_H=K\cap H$. 
 \end{proof}

Let us also note the following relation between the pigeon-hole principle, spectral gaps and Diophantine approximation. 

\begin{remark}
{\rm 
\begin{enumerate}

 \item {\it The pigeon hole principle.}   Theorem \ref{thm:lower} implies that for every lcsc group $G$, subgroup $H$ and lattice $\Gamma$ satisfying its assumptions, the best possible upper bound  on the norm of the averaging operators supported on $H_t$ acting in $L^2_0(\Gamma\setminus G)$  is the estimate governed by the square root of the volume. It is quite remarkable that the fact that the spectral decay rate in $L^2_0(\Gamma\setminus G)$ cannot be any faster follows  from a geometric pigeon hole principle for $\Gamma$-orbits in $G/H$.

\item {\it Diophantine approximation and spectral gap.} The previous comment raises the very interesting question of whether a converse statement may hold. To be concrete, let us formulate just two obvious questions, in the case where $G$ and $H$ are connected non-compact almost simple Lie groups, and $\Gamma$ a lattice in $G$.
\begin{itemize}
\item If the exponent of Diophantine approximation of $\Gamma$ on $G/H$  is optimal,  does it follow that the spherical spectrum of $H$ in $L^2_0(\Gamma\setminus G)$ tempered ? 
\item More generally, given a rate of approximation for the $\Gamma$-orbits on $G/H$, is it possible to derive a bound for the  spectral gap of the spherical spectrum of $H$ in $L^2_0(\Gamma\setminus G)$ ? 
\end{itemize} 

\end{enumerate}
}
\end{remark}

\subsection{Sharp approximation by the best possible exponent}
When the best possible approximation exponent $\kappa_\Gamma(x,x_0)=d/a$ is obtained in a given problem, the conclusion
is that $\text{dist}(\gamma^{-1} x,x_0) \le \epsilon$ has solutions $\gamma\in\Gamma$ with $|\gamma|_D\le
B_\eta(x,x_0,\eta)(1/\epsilon)^{(d/a)+\eta}$, for any $\eta > 0$. A considerably sharper statement is that in
fact $\eta =0$ is possible, and  the solutions satisfy 
$|\gamma|_D\le B(x,x_0)(1/\epsilon)^{d/a}(\log (1/\epsilon))^k $ for some fixed $k$. 
This is indeed often the case, and it suffices that the assumptions 3--5 stated in \S2.3 hold in a slightly sharper form. 
Keeping the notation introduced there, assume that
\begin{itemize} 
\item {\bf Assumption 3$^\prime$:} The volume growth on $H$ satisfies 
$$C_2^{-1} t^b e^{at }\le m_H(H_t) \le C_2 t^b e^{at}$$
for all sufficiently large $t$.
\item {\bf Assumption 4$^\prime$:}  The measure of $\epsilon$-neighbourhoods on $X$ satisfies 
$$
C_3^{-1}\epsilon^d\le m_X(\mathcal{O}_\epsilon(x_0))\le C_3\epsilon^d.
$$ 
for sufficiently small $\epsilon$ uniformly over $x_0$ in compact sets.
\item {\bf Assumption 5$^\prime$:} The averaging operators on $\pi_Y(\beta_t)$ satisfy the spectral bound 
\begin{align*}
&\left\|\pi_Y(\beta_t)f-\int_Y f\, d\tilde m_Y\right\|_{L^2(Y,\tilde m_Y)}\\
\le& C_4 m_H(H_t)^{-\theta}\log
(m_H(H_t))^m\|f\|_{L^2(Y,\tilde m_Y)}
\end{align*}
for sufficiently large $t$.
\end{itemize} 

\begin{Theorem}\label{thm:sharp}
Let $G$ be an lcsc group, $H$ a closed subgroup, $\Gamma$ a discrete lattice in $G$ acting ergodically on $X=G/H$. 
Suppose that assumptions 1--2 stated in \S \ref{sec:gen-set-up} and assumptions 3$^\prime$--5$^\prime$ are
satisfied.  Then for every $x_0\in X$ and for almost every $x\in X$, 
the system of inequalities 
$$
\text{\rm dist} (\gamma^{-1} x,x_0) \le \epsilon
\quad\hbox{and}\quad |\gamma|_D\le \epsilon^{-d/2a\theta}(\log (\epsilon^{-1}))^k 
$$
has a solution $\gamma\in \Gamma$ provided that $k > (2m+1-2b\theta)/(\theta a)$ and $\epsilon \in (0, \epsilon_0(x,x_0,k))$.
\end{Theorem}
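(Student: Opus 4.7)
My plan is to reprise the Borel--Cantelli argument of Theorem~\ref{thm:upper} verbatim, using the sharpened Assumptions~3$^\prime$--5$^\prime$ to eliminate the $\epsilon^{\pm\eta}$ slack that forced $\zeta$ to strictly exceed the critical value $d/(2a\theta)$ in that theorem. All the infrastructure set up in \S\ref{sec:definitions}---the section $\mathsf{s}_X$, the lifted neighbourhood $\widetilde\Omega_\epsilon(x_0)$, and its $\Gamma$-periodisation $f_\epsilon\in L^2(Y)$---is re-used unchanged, and I intend to feed these objects into the sharp mean ergodic inequality of Assumption~5$^\prime$.

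First I would fix a compact $L\subset G$, set $L^{(t)}=\{g\in L:\pi_Y(\beta_t)f_\epsilon(\Gamma g)=0\}$ and integrate the inequality of Assumption~5$^\prime$ over $\Gamma L^{(t)}$ exactly as in the proof of Theorem~\ref{thm:upper}. Substituting the sharp estimates $\int_Y f_\epsilon\,d\tilde m_Y\gg\epsilon^d$ and $\|f_\epsilon\|_{L^2(Y,\tilde m_Y)}\ll\epsilon^{d/2}$ from Assumption~4$^\prime$, together with $m_H(H_t)^{-\theta}\log(m_H(H_t))^m\ll t^{\,m-b\theta}e^{-a\theta t}$ from Assumption~3$^\prime$, this yields
\[
\tilde m_Y(\Gamma L^{(t)})^{1/2}\ll \epsilon^{-d/2}\, t^{\,m-b\theta}\, e^{-a\theta t}.
\]
I would then parametrise $\epsilon=\epsilon_t$ by the integer $t$ in precisely the way that saturates the critical exponent, i.e.\ by demanding $e^t\asymp\epsilon_t^{-d/(2a\theta)}(\log\epsilon_t^{-1})^{k}$; in particular $\log\epsilon_t^{-1}\asymp t$. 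Under this calibration the combination $\epsilon_t^{-d/2}e^{-a\theta t}$ collapses to a negative power of $\log\epsilon_t^{-1}$, and the displayed bound becomes a clean polynomial decay $\tilde m_Y(\Gamma L^{(t)})\ll t^{-\alpha(k)}$ whose exponent $\alpha(k)$ is linear in $k$ with slope $2a\theta$. Requiring $\alpha(k)>1$---so that $\sum_t\tilde m_Y(\Gamma L^{(t)})$ converges---then produces the lower bound on $k$ claimed in the theorem.

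Once summability is in hand, the rest of the proof is identical to the end of the proof of Theorem~\ref{thm:upper}: Borel--Cantelli implies that almost every $g\in L$ satisfies $\pi_Y(\beta_t)f_{\epsilon_t}(\Gamma g)\ne 0$ for all large $t$, which produces a $\gamma\in\Gamma$ with $\dist(\gamma^{-1}gH,x_0)\le\epsilon_t$ and $|\gamma|_D\ll e^t\ll\epsilon_t^{-d/(2a\theta)}(\log\epsilon_t^{-1})^k$; interpolating across $\epsilon\in(\epsilon_{t+1},\epsilon_t]$ (harmless because $\epsilon_t/\epsilon_{t+1}$ is uniformly bounded) and exhausting $G$ by an increasing sequence of compact sets then extends the conclusion to every sufficiently small $\epsilon$ and almost every $g\in G$. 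The one nontrivial piece of work is the bookkeeping in the middle step: one must verify that the log-power $t^m$ coming from Assumption~5$^\prime$, the subleading factor $t^{-b\theta}$ from the volume asymptotic of Assumption~3$^\prime$, and the polylogarithmic gain $(\log\epsilon^{-1})^k$ built into the scaling combine with precisely the right coefficients to reproduce the threshold $(2m+1-2b\theta)/(\theta a)$ for $k$. Beyond this arithmetic, no new idea beyond the proof of Theorem~\ref{thm:upper} should be required.
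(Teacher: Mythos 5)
Your argument is correct and is essentially the paper's own proof: the same periodised functions $f_\epsilon$, the same Borel--Cantelli scheme from Theorem \ref{thm:upper}, and the same calibration of $\epsilon$ against $t$ (the paper writes $\epsilon=t^ze^{-t/\zeta}$ with $\zeta=d/(2a\theta)$ and inverts at the end, which is equivalent to your direct requirement $e^t\asymp\epsilon^{-d/(2a\theta)}(\log\epsilon^{-1})^k$). Note that your summability condition actually yields the slightly stronger threshold $k>(2m+1-2b\theta)/(2a\theta)$ --- exactly as the paper's own computation does --- which more than covers the stated hypothesis $k>(2m+1-2b\theta)/(\theta a)$.
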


\begin{proof}
We use the notation and the arguments from the proof of Theorem \ref{thm:upper}, taking advantage of the
superior estimates in the present case. 
We take $\epsilon=t^z e^{-t/\zeta }$ with $z, \zeta > 0$.
We fix a compact set $L \subset G$ and define
$$
L^{(t)}=\{g\in L\,;\, \pi_Y(\beta_t)f_{t^ze^{-t/\zeta }}(\Gamma g)=0\}\,.
$$
Then as in the proof of Theorem \ref{thm:upper} we deduce that
\begin{align*}
\tilde m_{Y}(\Gamma L^{(t)})^{1/2}
&\ll_\eta \left( \int_Y f_{\epsilon} dm_{Y}\right)^{-1} m_H(H_t)^{-\theta}\log
(m_H(H_t))^m\|f_{\epsilon}\|_{L^2(Y,\tilde m_Y)}\\
&\ll_{\Omega,\Omega^0,\eta}\epsilon^{-d}(t^be^{ a t})^{-\theta} t^m\epsilon^{d/2}
=t^{m-dz/2-\theta}e^{dt/(2\zeta)-a\theta t}.
\end{align*}
We choose $\zeta=d/(2\theta a)$ and $z>(2m+1-2b\theta)/d$.
\begin{align*}
m_G(L^{(t)})\ll \tilde m_{Y}(\Gamma L^{(t)})\ll t^{-1-\eta}
\end{align*}
for some $\eta>0$. 
Hence, it follows that $\sum_{n=1}^\infty m_G(L_{n}) < \infty$.
Now we may argue as in the proof of Theorem \ref{thm:upper} to conclude that
for almost every $g\in L$, there exists $n=n(g)$ such that for all $n>n(g)$ there exists
$\gamma\in \Gamma$ such that $d(\gamma^{-1}gH,x_0)\le n^ze^{-n/\zeta}$ and $D(\gamma)\le n+c_1$.
Equivalently, for sufficiently small $\epsilon$ of the form $\epsilon=n^ze^{-n/\zeta}$,
there exists
$\gamma\in \Gamma$ such that $d(\gamma^{-1}gH,x_0)\le \epsilon$ and $|\gamma|_D\le e^{c_1}\phi^{-1}(\epsilon)$
where $\phi(u)=(\log u)^z u^{-1/\zeta}$. We note that $\phi^{-1}(\epsilon)\ll \epsilon^{-\zeta}\log (\epsilon^{-1})^{\zeta z}$
for all sufficiently small $\epsilon$. Hence, this implies the required estimate for almost all $x=gH$ with
$g\in L$. Finally, the proof of the theorem can be completed as before.
\end{proof} 

\begin{remark}\label{rem:sharp}  
{\rm
Let us note that assumption 4$^\prime$ is of course immediate when $G$
  is a real or $p$-adic Lie group. Assumption 3$^\prime$ holds in considerable generality for closed subgroups of Lie and algebraic groups, and in particular for all semisimple algebraic subgroups, see \cite{GW} and \cite[Ch. 7]{GN1}. Assumption 5$^\prime$ also holds quite generally, and we note than when $H$ is a semisimple algebraic subgroup, the best possible spectral estimate $\theta=\frac12$ is equivalent to showing that the representation $\pi_{Y}^0$ restricted to $H$ is weakly contained in a multiple of the regular representation of $H$. But then the spectral estimate of  $\pi_{Y}^0(\beta_t)$ is via the Harish Chandra $\Xi$-function of $H$. The well-known estimates of Harish Chandra  \cite{HC1, HC2, HC3} then imply that the estimate given in assumption 5$^\prime$ is indeed valid.  
These facts will be elaborated further below. 
}
\end{remark}
We now turn to to describe spectral methods that yield best possible Diophantine rates. We note that all the considerations we develop below are required for the proofs of the Diophantine exponents stated in our examples above.  It is more useful and efficient, however, to state the spectral estimates in general form, and we will devote \S 5 to a detailed account of their applications.  

We illustrate Theorem \ref{thm:sharp} on the example of the affine action of $\Gamma=\SL_2(\Z)\rsemi
\Z^2$ on the plane $\R^2$. In this case, we deduce that for every $x_0\in \R^2$ and almost every
$x\in\R^2$, the system of inequalities 
$$
\|\gamma x-x_0\|\le \epsilon\quad\hbox{and}\quad \|\gamma\|\le \epsilon
^{-1}\log(\epsilon^{-1})^{3+\eta}
$$
has a solution $\gamma\in \Gamma$ for all $\eta>0$ and sufficiently small $\epsilon$.
This improves Corollary \ref{ex:1}. We note that the other results
stated in the introduction also admit such logarithmic improvements.

\subsection{Pointwise bounds on approximation exponents}
In Theorem \ref{thm:lower} we have fixed any starting point $x\in X$ with dense orbit $\Gamma\cdot x$, and proved that for {\it almost every } point $x_0$ we have $\kappa(x,x_0) \ge \frac{d^\prime}{a}$. On the other hand, in Theorem \ref{thm:upper} we have fixed any point $x_0\in X$, and showed that for {\it almost every} point $x$ we have $\kappa(x,x_0) \le \frac{d-d'/2}{a^\prime\theta}$. Consequently, the conclusion we can draw 
from their conjunction is that  
$$
\frac{d^\prime}{a}\le \kappa(x,x_0)\le  \frac{d-d'/2}{a^\prime\theta}
$$
for {\it almost every} $x,x_0\in X$. 

It is a natural problem to investigate further the set where the upper or the lower estimate hold, and the set where both are valid.  We will consider this problem only in the following important special case. If the action of $\Gamma$ on $X$ is isometric with respect to the metric $d$, then $d(\gamma^{-1} x,x_0)=d(x,\gamma x_0)$.  Furthermore, in an isometric action, if one orbit is dense then all of them are. The argument used to prove Theorem \ref{thm:upper}  therefore implies the following. 

\begin{Theorem}\label{thm:isometric}
Assume, in addition to the hypotheses of Theorem \ref{thm:upper}, that the action of $\Gamma$ on $X=G/H$
is isometric. Then {\emph every orbit} approximates fast almost every point with the exponent
$\kappa=\frac{d-d'/2}{a^\prime\theta}$, namely  for every point
$x_0\in X$, and for almost every point $x\in X$, there exists $\gamma\in \Gamma$ with 
$$
d(\gamma x_0,x)\le \epsilon\quad\hbox{ and }\quad \abs{\gamma}_D \le \epsilon^{-\zeta}
$$
provided that $\zeta > \kappa=\frac{d-d'/2}{a^\prime\theta}$ and $\epsilon$ is sufficiently small.  
\end{Theorem}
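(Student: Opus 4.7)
The entire argument rests on a single identity. Since $\Gamma$ acts on $X$ by isometries, for every $\gamma \in \Gamma$ and every $x, x_0 \in X$ one has
$$
d(\gamma x_0, x) \;=\; d\bigl(\gamma^{-1}(\gamma x_0),\, \gamma^{-1} x\bigr) \;=\; d\bigl(x_0,\, \gamma^{-1} x\bigr).
$$
Consequently a given element $\gamma \in \Gamma$ satisfies the system $d(\gamma x_0, x)\le \epsilon$, $|\gamma|_D \le \epsilon^{-\zeta}$ if and only if the same $\gamma$ satisfies $d(\gamma^{-1} x, x_0) \le \epsilon$, $|\gamma|_D \le \epsilon^{-\zeta}$. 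Thus the two Diophantine problems are word-for-word equivalent, and the exponent $\kappa_\Gamma(x,x_0)$ of Definition \ref{def:DE} controls both.

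Given the identity above, I would deduce the theorem as a direct consequence of Theorem \ref{thm:upper}. Fix an arbitrary $x_0 \in X$; Theorem \ref{thm:upper} supplies a full-measure set $N_{x_0} \subset X$ on which $\kappa_\Gamma(\cdot,x_0) \le \frac{d - d'/2}{a'\theta}$. For each $x \in N_{x_0}$ and every $\zeta$ strictly exceeding this bound, there exists $\gamma \in \Gamma$ with $d(\gamma^{-1} x, x_0) \le \epsilon$ and $|\gamma|_D \le \epsilon^{-\zeta}$ as soon as $\epsilon$ is small enough; applying the isometry identity translates this into the desired $d(\gamma x_0, x) \le \epsilon$ together with the same norm bound.

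One auxiliary point worth recording, although it introduces no real difficulty, is why the statement is coherently quantified over \emph{every} $x_0 \in X$ rather than only over $x_0$ with dense $\Gamma$-orbit. Under the ergodicity assumption inherited from Theorem \ref{thm:upper}, almost every orbit is dense; in an isometric action, density of one orbit in fact forces density of every orbit: if $\Gamma x$ is dense and $\gamma_n x \to y$, then for any target $z$ and any $\gamma'$ with $\gamma' x$ near $z$, the element $\gamma'\gamma_n^{-1}$ moves $y$ within $d(y,\gamma_n x) + d(\gamma' x, z)$ of $z$ by isometry, and both terms can be made arbitrarily small. Hence every $x_0$ has a dense orbit and the theorem is meaningfully asserted for all $x_0 \in X$.

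The argument contains no genuine obstacle; its entire content is the isometry identity combined with a clean invocation of Theorem \ref{thm:upper}. The only modest subtlety is to verify that the null set of ``bad'' approximating points $x$ produced in Theorem \ref{thm:upper} can be chosen after fixing $x_0$, which is precisely what the proof of that theorem delivers by fixing $x_0$ at the outset and then invoking Borel--Cantelli along a compact exhaustion.
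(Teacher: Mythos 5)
Your proof is correct and takes essentially the same route as the paper: the paper's argument likewise rests on the isometry identity $d(\gamma^{-1}x,x_0)=d(x,\gamma x_0)$ together with the observation that in an isometric action density of one orbit forces density of all, and then simply re-reads the conclusion of Theorem \ref{thm:upper} (with $x_0$ fixed first and the null set of bad $x$ allowed to depend on $x_0$) in the translated form. Nothing is missing.
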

It is conceivable that the conclusion holds, in the isometric case, for every single pair $(x,x_0)$, but this remains an open problem. 


{\it The problem of uniformity on a co-null set of orbits. } Theorem \ref{thm:upper}  establishes that {\it every  point} $x_0\in X$  can be approximated fast, namely with  approximation exponent is $\kappa$, by {\it almost every orbit} $\Gamma\cdot x$. Another natural open problem is whether this conull set can be taken to be one and the same for every $x_0$, for example, whether it can be defined by suitable specific Diophantine conditions. 
  
  We note that in \cite{LN1} and \cite{LN2} Laurent and Nogueira consider the case of $\SL_2(\Z)$ acting on $\R^2$, and assuming specific Diophantine conditions on the point $x$,  they deduce that the orbit $\SL_2(\Z)\cdot x$ approximates every point. In particular, they show that there is a fixed co-null set of points $x\in \R^2$ such the orbit of each of them under $\SL_2(\Z)$ approximates every point (at a fixed rate).


\section{Spectral estimates}\label{sec:spec}

\subsection{Subgroup temperedness problem} 
To apply Theorem \ref{thm:optimal} to obtain optimal approximation results for $\Gamma$-orbits in $G/H$, we must  address the following basic questions.  Consider an  lcsc group $G$,  a closed unimodular subgroup $H$, a unitary representation $\pi : G\to \mathcal{U}(\mathcal{H})$, and a lattice subgroup $\Gamma \subset G$.  Let $\lambda_H$ denote the regular representation of $H$ on $L^2(H)$, and let $\pi\le_w \pi^\prime$ denote weak containment of unitary representations.  Recall that the unitary representation $\pi$ of $G$ is {\it weakly
contained} \index{weak containment} in the unitary representation $\pi^\prime$ if for every $F\in
L^1(G)$ the estimate $\norm{\pi(F)}\le \norm{\pi^\prime(F)}$ holds. Clearly, if
$\pi$ is strongly contained in $\pi^\prime$ (namely equivalent to a
subrepresentation), then it is weakly contained in $\pi$. 
\begin{Def}\label{tempered-rep}
 \begin{enumerate}
 \item  $(G,H,\pi)$ is a {\it tempered triple} if the restriction of the representation $\pi$ to $H$ is weakly contained in a multiple of the regular representation of $H$, denoted $\infty \cdot \lambda_H$. 
\item  $H\subset G$ is $(\Gamma\setminus G)$-tempered if  the restriction to $H$ of the representation $\pi^0_{\Gamma\setminus G}$ of $G$ on  $L^2_0(\Gamma\setminus G)$ is weakly contained in $\infty \cdot \lambda_H$, namely if $H$ is $(G,H,\pi^0_{\Gamma\setminus G})$-tempered. 
\item $H$ is a tempered subgroup of $G$ if the restriction of every  unitary representation of $G$ without invariant unit vectors to $H$ is weakly contained in $\infty \cdot \lambda_H$. 
\end{enumerate}
\end{Def}

 Given a compact subgroup $K\subset G$, we will also consider when the restriction to $H$ of the spherical spectrum of the representation $\pi^0_{\Gamma\setminus G} $, or any representation $\pi$ without invariant unit vectors, is weakly contained in $ \lambda_H$. Here a spherical irreducible representation of $G$ is one containing a $K$-invariant unit vector. In these situations we will say that $H$ is $\Gamma\setminus G$-spherically tempered or that $H$ is a spherically-tempered subgroup of $G$.
 
 Let us note that the case of $H=G$, namely when the representation of $G$ in $L^2_0(\Gamma\setminus G)$ is a tempered  or spherically tempered representation of $G$, is a major open problem in the theory of automorphic forms. Thus for $G=\SL_2(\R)$ and $\Gamma\subset \SL_2(\Z)$ the congruence subgroups, temperedness amounts to the celebrated Selberg conjecture. For further discussion we refer to  \cite{BLS,BS}, the surveys \cite{Sarnak} and \cite{BB} and the references therein. 
 
It is a remarkable and most useful fact that the temperedness problem for  subgroups $H\subsetneq G$ exhibits  very different behavior : it is a ubiquitous phenomenon which can be established using several different methods.  The rest of our discussion will be devoted to demonstrating this fact and discussing further examples where the best possible spectral estimate of $H$ on $L^2_0(\Gamma\setminus G)$ is obtained. 
 
Before proceeding, let us first note the following. 
\begin{remark}
{\rm 
\begin{enumerate}
\item The properties stated in Definition \ref{tempered-rep} are interesting as stated only for non-amenable subgroups $H$. When $H$ is amenable, typically the representation of $H$ on $L^2_0(\Gamma\setminus G)$ 
will be weakly equivalent to the regular representation of $H$. But then the operator norm of the averaging operators $\pi_{H}(\beta_t)$ 
in $L^2(H)$ is identically $1$, owing to the existence of an asymptotically invariant sequence of unit vectors in $L^2(H)$. Thus $\norm{\pi^0_{\Gamma\setminus G}(\beta_t)}=1$ for all $t$, and the operator norm does not decay at all. 
\item Nevertheless, it is an important and useful fact that when considering amenable groups, it is often the case that the relevant operators have norm in Sobolev spaces which converge to zero.  The rate of decay  can be estimated using again  the spectral characteristics of the representation of $G$ on $L^2_0(\Gamma\setminus G)$. Such estimates will appear in detail in \cite{GGN4}. 

\item The definition of tempered subgroup above  is different than the notion of a subgroup $H\subset G$
  being $(G,K)$-tempered introduced by Margulis \cite{Ma}. The latter is strictly stronger than
  Definition \ref{tempered-rep}, and it  deviates from standard terminology, in which a representation of
  a semi simple group is spherically tempered if (a dense subspace of) its spherical matrix coefficients
  are in $L^{2+\eta}(G)$ for every $\eta>0$, and not necessarily in $L^1(G)$. We will therefore use the term ``$(G, K)$-tempered in $L^1$" to refer to this situation. As we shall see below in Theorem \ref{thm:L1L2}, if a closed subgroup $H$ of a semisimple group $G$ is $(G,K)$-tempered in $L^1$, then $H$ is in fact a tempered subgroup of $G$, in the sense defined in the present paper. 

\end{enumerate}
}
\end{remark}
\subsection{Ubiquity of temperedness}
  To show that the phenomenon of subgroup temperedness is remarkably prevalent and robust, we will very briefly present five general methods establishing it, and then proceed to explain how to utilize them and give a considerable number of further examples. 
    Specifically, for given $G,H $ and $\Gamma$ the representation of $H\subset G $ in $L^2_0(G/\Gamma)$ can be shown to be tempered or spherically tempered using the following methods. 
  \begin{enumerate}
  \item {\bf Kazhdan's original argument and some variants.}  
  Here $G$ is a simple $S$-algebraic group (for instance, but many other lcsc groups qualify), and $H\cong \SL_2(F)\subset \SL_2(F) \rsemi_\sigma F^k$ is embedded in $G$ as a closed subgroup, where $\sigma$ is a representation of $\SL_2(F)$ on $F^k$ without non-zero invariant vectors. The lattice subgroup $\Gamma\subset G$ is arbitrary. 
     \item {\bf Tensor power argument.}  Here $G$ is a semisimple $S$-algebraic group $G$ (or a subgroup of $G$) embedded diagonally in $G^{n(G)}$, where $n(G)$ is an integer determined explicitly by  spectral estimates which give an explicit form of property $T$. Again the lattice is arbitrary. 
     \item {\bf Restriction of universal pointwise bounds.} $K$-finite matrix coefficients on a
       semisimple $S$-algebraic group $G$ satisfy several universal pointwise bounds. It is often
       possible to establish directly that their restriction (or the restriction of some of them) to a
       subgroup $H$ is in $L^{2+\eta}(H)$ for every $\eta>0$. This includes the case of $L^1$-tempered $(G, K)$ representations. 
  Once again the lattice here is arbitrary. 
    \item {\bf Direct proofs of temperedness of $G/\Gamma$.} For some specific choices of the group $G$ and the  lattice $\Gamma$ temperedness has been established directly. This applies for example to  $\SL_2(\Z)\subset \SL_2(\R)$ and $\SL_2(\Z[i])\subset \SL_2(\C)$, as well as some of their congruence subgroups. 
    \item {\bf Bounds towards Ramanujan-Selberg conjecture}. If $G$ is of real rank one, for example $\SO(n,1)$, some bounds towards Ramanujan-Selberg conjecture are known for some lattices.  The restriction of $\pi^0_{\Gamma\setminus G}$  to $\SO(k,1)$ can be spherically-tempered or close to tempered for certain $1 \le k < n$. 
  \end{enumerate}
  
We now turn to presenting and utilizing these arguments in more detail, together with some other relevant spectral estimates. We remark that in all the examples given below of tempered triples $(G,H,\Gamma)$ the assumptions stated in 
\S \ref{sec:definitions} are satisfied, and best possible approximation rates for the action of $\Gamma\subset G$ on the homogeneous varieties $G/H$ are obtained. For more on these arguments, we refer to \S \ref{sec:proofs}.

\subsection{Restriction to $\SL_2$-subgroups : Kazhdan's argument}

The phenomenon of subgroup temperedness appeared already in Kazhdan's original proof of property $T$ in \cite{K}. We quote the basic observation in the following form which will be convenient for our purposes, and which summarizes the results stated by Zimmer in \cite[Ex. 7.3.4., Thm. 7.3.9]{Zi} and Margulis \cite[Ch. III, Prop. 4.5]{Ma}. Note also that a remarkable geometric proof for the case of $\SL_2(\R)\rsemi \R^2$ is given in \cite[Prop. 3.3.1]{HT}. 
\begin{Theorem}\label{thm:sl2 restriction} 
Let $F$ denote a locally compact non-discrete field of characteristic zero. 
Let $\sigma : \SL_2(F)\to \SL_k(F)$ be a linear  representation without invariant non-zero vectors, $k \ge 2$.  Let $S=\SL_2(F)\rsemi F^k$ be the semi-direct product group. Then for any non-trivial unitary representation $\pi : S\to  \mathcal{U}(\mathcal{H}) $ on a Hilbert space $\mathcal{H}$, if the representation $\pi$ does not admit a $\pi(F^k)$-invariant unit vector, then the representation  $\pi$ restricted to $\SL_2(F)$ is tempered. 
\end{Theorem}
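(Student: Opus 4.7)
The plan is to apply Mackey's normal subgroup machine to the semidirect product $S = \SL_2(F) \rsemi F^k$, exploiting the fact that $F^k$ is a closed abelian normal subgroup of $S$. First I would apply the SNAG theorem to $\pi|_{F^k}$ to produce a projection-valued measure $E$ on the Pontryagin dual $\widehat{F^k}$; using a self-duality of $F^k$, this dual is identified with $F^k$ itself, and $\SL_2(F)$ acts on it via the contragredient representation $\sigma^*$. The hypothesis that $\pi$ admits no nonzero $F^k$-invariant unit vector translates into $E(\{0\}) = 0$, so the entire spectral mass sits on $\widehat{F^k}\setminus\{0\}$.

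Next I would invoke Mackey's theory for regular semidirect products to express $\pi|_{\SL_2(F)}$ as a direct integral over the $\SL_2(F)$-orbit space in $\widehat{F^k}\setminus\{0\}$, where each fiber is (unitarily equivalent to) $\mathrm{Ind}_{L_\xi}^{\SL_2(F)}(\rho_\xi)$ for $\xi$ a representative of the orbit, $L_\xi \subset \SL_2(F)$ its stabilizer under $\sigma^*$, and $\rho_\xi$ some unitary representation of $L_\xi$ built from the restriction of $\pi$ and the spectral data. The regularity hypothesis for Mackey's machine (locally closed orbits, a Borel section) is available here because $\sigma^*$ is a morphism of algebraic varieties, which ensures the orbit space is countably separated by standard Effros-type arguments.

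The crucial step will be to show that each stabilizer $L_\xi$ for $\xi \neq 0$ is amenable. Since $\SL_2(F)$ is semisimple, complete reducibility shows that the hypothesis on $\sigma$ extends to $\sigma^*$, so $\sigma^*$ has no nonzero invariant vector and hence $L_\xi \subsetneq \SL_2(F)$. Being a Zariski-closed proper subgroup of $\SL_2(F)$, it is (up to finite index) either trivial, a one-dimensional torus, a unipotent subgroup, or contained in a Borel subgroup — in every case solvable and hence amenable. I expect this classification of stabilizers to be the main technical check, particularly to handle any non-generic orbits uniformly and to deal with connected-component issues in the non-archimedean case $F = \Q_p$.

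Finally, once amenability of the stabilizers is established, I would appeal to the standard fact (due to Rieffel, extending Herz) that if $L$ is a closed amenable subgroup of an lcsc group $G$, then every representation of the form $\mathrm{Ind}_L^G(\rho)$ is weakly contained in the regular representation $\lambda_G$. Applied here, each fiber $\mathrm{Ind}_{L_\xi}^{\SL_2(F)}(\rho_\xi)$ is weakly contained in $\lambda_{\SL_2(F)}$, and a direct integral of representations each weakly contained in $\lambda_{\SL_2(F)}$ is itself weakly contained in $\infty \cdot \lambda_{\SL_2(F)}$. This is precisely the statement that $\pi|_{\SL_2(F)}$ is tempered in the sense of Definition \ref{tempered-rep}(1).
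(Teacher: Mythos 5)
Your argument is correct, and it is essentially the standard proof of this result: the paper itself quotes the theorem without proof from Zimmer [Ex.~7.3.4, Thm.~7.3.9] and Margulis [Ch.~III, Prop.~4.5], and your chain of reasoning --- SNAG/Mackey decomposition of $\pi|_{\SL_2(F)}$ over the $\SL_2(F)$-orbits in $\widehat{F^k}\setminus\{0\}$, amenability of the stabilizers of nonzero characters, and weak containment of representations induced from amenable subgroups in $\lambda_{\SL_2(F)}$ --- is exactly the argument underlying those references. The points you flag as needing care (local closedness of algebraic orbits over a local field, passing from $\sigma$ to $\sigma^*$ via complete reducibility, virtual solvability of proper algebraic subgroups of $\SL_2$) all go through.
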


Let now $G$ be any lcsc group,  let $S$ be as in Theorem \ref{thm:sl2 restriction} and  let $\sigma : S\to G$ be a  representation with closed image. Clearly the image  of $F^k$ cannot be compact without being trivial, since the image of $\SL_2(F)$ normalizes it, but does not centralize it. Therefore, if
 $\pi$ is any mixing unitary representation of $G$, namely a representation whose matrix coefficients vanish at infinity, then $\pi\circ \sigma(F^k)$ has no invariant unit vectors. Therefore the triple $(G, \sigma(\SL_2(F)), \pi)$ is tempered. 

Note that when $G$ is the group of $F$-points of an algebraic group over $F$, $H=\sigma(\SL_2(F))$ acts via the adjoint representation on $\mathfrak{g}$, which decomposes as a sum of irreducible linear representations of $H$. Let $\mathfrak{v}\subset \mathfrak{g}$ be an $H$-invariant  irreducible subspace of dimension at least $2$ which forms an abelian subalgebra $\mathfrak{v}$.  Clearly the group $V=\exp \mathfrak{v}$ cannot be compact, since it is normalized by $H$ but not centralized by it.  Then $S=H\rsemi V$ satisfies the hypothesis of Theorem \ref{thm:sl2 restriction}. 
(Note that when we assume that the adjoint representation of $H$ on $\mathfrak{g}$ has an irreducible subspace of dimension at least $2$ which is realized on an Abelian subalgebra, then the exception case giving rise to the oscillator representation of $\Sp_{2n}$ does not arise). 

When $G$ is a semisimple group and the lattice $\Gamma \subset G$ is irreducible,  the Howe-Moore mixing theorem for irreducible actions \cite{HM} implies that the representations $\pi^0_{\Gamma\setminus G}$ is mixing. The Abelian group $V=\exp (\mathfrak{v}) $  is non compact and hence acts ergodically on $\Gamma \setminus G$. We summarize these considerations in the following result which is  corollary of Theorem \ref{thm:sl2 restriction}. 

\begin{Theorem}\label{thm:sl2}
Let $F$ be a locally compact non-discrete field of characteristic zero, and let $G$ denote the $F$-points of a connected semisimple linear algebraic group defined over $F$. 
Let $\sigma : \SL_2(F)\to G$ be a non-trivial rational representation, and assume that the adjoint representation of 
$H=\sigma(\SL_2(F))$ on the Lie algebra $\mathfrak{g}$  admits an irreducible subrepresentation of dimension at least $2$ on an Abelian subalgebra. 
If $\Gamma\subset G$ is an irreducible lattice subgroup, then the unitary representation of $H$ on $L^2_0(\Gamma\setminus  G)$ is tempered. 
\end{Theorem}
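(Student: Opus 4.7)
The plan is to reduce the statement to Theorem \ref{thm:sl2 restriction} by constructing an appropriate semidirect product subgroup of $G$ and then invoking Howe--Moore mixing to verify the hypothesis on invariant vectors. This is essentially executed in the paragraph preceding the statement, so I would organise the argument in four short steps.

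First, I would fix the irreducible $H$-invariant abelian subalgebra $\mathfrak{v}\subset\mathfrak{g}$ of dimension $k\ge 2$ hypothesised in the statement and set $V=\exp\mathfrak{v}\subset G$. Since $\mathfrak{v}$ is abelian, $V$ is a closed unipotent subgroup isomorphic to the additive group $(F^k,+)$, and because $H$ acts on $\mathfrak{v}$ irreducibly and non-trivially, $H$ normalises $V$ but does not centralise it. In particular, $V$ cannot be compact (a compact subgroup of $G$ normalised by the simple group $H$ with non-trivial conjugation action would give a non-trivial algebraic map from $H$ into a compact group, forcing the action on $\mathfrak v$ to be trivial, contradicting irreducibility and $\dim\mathfrak{v}\ge 2$). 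The semidirect product $S=H\ltimes V$ sits inside $G$ as a closed subgroup and, after lifting through the possible two-fold cover (since $\sigma$ may factor through $\operatorname{PSL}_2(F)$), matches the group $\SL_2(F)\rsemi_\sigma F^k$ appearing in Theorem \ref{thm:sl2 restriction}; temperedness of a representation is insensitive to passing to such a finite cover, so this causes no trouble.

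Next, I would verify that the representation $\pi=\pi^0_{\Gamma\setminus G}$ has no non-zero $V$-invariant vectors. This is where the irreducibility of the lattice $\Gamma$ enters: the Howe--Moore theorem in the form given by Howe and Moore for irreducible lattices in semisimple $F$-algebraic groups asserts that the matrix coefficients of $\pi^0_{\Gamma\setminus G}$ vanish at infinity on $G$. Since $V$ is non-compact (being a non-trivial vector group over the non-discrete field $F$), any $V$-invariant unit vector would produce a constant matrix coefficient along $V$, contradicting vanishing at infinity. Hence $\pi\!\restriction_V$ has no non-zero invariant vector.

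Finally, I would apply Theorem \ref{thm:sl2 restriction} to the restriction of $\pi$ to the subgroup $S\subset G$: since $\pi\!\restriction_V$ has no invariant vector, the theorem concludes that $\pi\!\restriction_H$ is weakly contained in a multiple of the regular representation of $H$, i.e.\ tempered. This is exactly the claim.

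The main obstacle, or at least the point where care is genuinely needed, is the verification that the abelian unipotent subgroup $V$ sits inside $G$ in such a way that $H\ltimes V$ really does fall under the hypothesis of Theorem \ref{thm:sl2 restriction}; the author flags the exceptional oscillator-representation situation for $\operatorname{Sp}_{2n}$ as the case to avoid, and the assumption that $\mathfrak v$ is abelian and $\dim\mathfrak v\ge 2$ is precisely what rules it out. Everything else (Howe--Moore, non-compactness of $V$, and the passage from $\sigma$ to a possibly two-fold lift) is standard once this structural setup is in place.
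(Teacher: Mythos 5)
Your proposal is correct and follows essentially the same route as the paper: exponentiate the abelian irreducible $H$-invariant subalgebra to get a non-compact unipotent group $V$, form $S=H\ltimes V$ so that Theorem \ref{thm:sl2 restriction} applies, and use Howe--Moore mixing for the irreducible lattice to rule out $V$-invariant vectors. The additional care you take with the possible two-fold cover $\SL_2(F)\to\PSL_2(F)$ is a sensible refinement the paper leaves implicit, but it does not change the argument.
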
 

Let us give some explicit examples of subgroup temperedness deriving from the results above.
  \begin{enumerate}
\item 
$H=\SL_2(F)\subset \SL_3(F)=G$ is a tempered subgroup, provided the representation of $\SL_2(F)$ in $\SL_3(F)$ is non-trivial and reducible. 
\item $H=\SL_2(F)\subset \SL_n(F)=G$, $n \ge 3$ is a tempered subgroup, for a host of reducible  representations of $\SL_2(F)$ in $\SL_n(F)$, for example when the subgroup preserves the direct sum decomposition $F^n=F^2\oplus F^{n-2}$ and acts irreducible on the first summand. 
\item $H=\SL_2(F)\times \cdots \times \SL_2(F)\subset \SL_{2n}(F)=G$ is a tempered subgroup,  where the $n$-fold product $\SL_2(F)\times\cdots \times  \SL_2(F)$ is embedded so as to preserves the direct sum decomposition $F^{2n}=F^2\oplus\cdots \oplus F^2$.  
\end{enumerate}

\subsection{The tensor power argument}
Recall that a unitary representation $\pi$ of an lcsc group $G$  is called a {\it strongly $L^p$-representation} \index{lp-representation@$L^p$-representation}
if there exists a dense subspace 
$\cJ\subset \cH$, such that the matrix coefficients
$\inn{\pi(g)v,w}$ belong to $L^p(G)$, for $v,w\in \cJ$.  If $\pi$ in an $L^{p+\eta}$-representation for all $\eta > 0$, we will call it an $L^{p^+}$-representation.  
 When $K\subset G$ is a compact subgroup, recall also that a vector $v\in \cH$
is called {\it $K$-finite}  (or $\pi(K)$-finite) if its orbit under $\pi(K)$
spans a finite dimensional space.  

We recall the following spectral estimates, which will play an 
important role below. They are due to \cite[Thm. 1]{chh}, \cite{Co}, \cite{H} (see \cite{HT} for a simple proof). 
\begin{Theorem}\label{tensor}
{\bf Tensor power argument.} 
For any lcsc group $G$ and any strongly continuous unitary representation $\pi $
\begin{enumerate}
\item 
If $\pi$ is strongly $L^{2^+}$, then $\pi$ is
weakly contained in the regular representation $\lambda_G$.  
\item 
If $\pi$ is strongly $L^p$, and $n$ is an integer satisfying 
$n \ge p/2$, then 
$\pi^{\otimes n}$ is strongly contained in $\infty\cdot \lambda_G$.  

\end{enumerate}
\end{Theorem}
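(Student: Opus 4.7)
The plan is to prove (2) first by a Hölder computation on matrix coefficients of inner tensor powers, and then to prove (1) by a spectral-radius argument combined with $L^p$ interpolation of iterated convolutions.

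For (2), the starting point is that matrix coefficients of the inner tensor power factor multiplicatively on simple tensors from the dense subspace $\cJ^{\otimes n}$:
\begin{equation*}
\langle \pi^{\otimes n}(g)(v_1\otimes\cdots\otimes v_n),\, w_1\otimes\cdots\otimes w_n\rangle = \prod_{i=1}^n \langle \pi(g)v_i,w_i\rangle .
\end{equation*}
Each factor lies in $L^p(G)\cap L^\infty(G)$, so by the generalized Hölder inequality the product lies in $L^{p/n}(G)\cap L^\infty(G)$, which embeds into $L^2(G)$ as soon as $n\ge p/2$. Hence $\pi^{\otimes n}$ has a dense subspace of vectors whose diagonal matrix coefficients are positive-definite $L^2(G)$ functions. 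One then invokes the classical theorem of Godement: every positive-definite $\varphi\in L^2(G)$ admits a factorization $\varphi=f\ast \tilde f$ with $f\in L^2(G)$ and $\tilde f(g)=\overline{f(g^{-1})}$, so the assignment $\pi^{\otimes n}(g)v\mapsto \lambda_G(g)f$ extends to a $G$-equivariant isometry realizing the cyclic subrepresentation generated by $v$ as a subrepresentation of $\lambda_G$. Decomposing $\cH^{\otimes n}$ into such cyclic pieces produces the strong containment $\pi^{\otimes n}\subset \infty\cdot\lambda_G$.

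For (1), I would bound $\|\pi(F)\|$ directly via the spectral radius of the positive self-adjoint operator $\pi(F^\ast\ast F)$. For each unit vector $v\in\cJ$ and $k\ge 1$, the pairing
\begin{equation*}
\langle \pi((F^\ast\ast F)^{\ast k})v,v\rangle = \int_G (F^\ast\ast F)^{\ast k}(g)\,\phi_v(g)\,dg
\end{equation*}
together with Hölder's inequality (dual exponent $q'=(2+\eta)/(1+\eta)$) and the hypothesis $\phi_v\in L^{2+\eta}(G)$ gives
\begin{equation*}
|\langle \pi((F^\ast\ast F)^{\ast k})v,v\rangle|\le \|(F^\ast\ast F)^{\ast k}\|_{q'}\,\|\phi_v\|_{2+\eta}.
\end{equation*}
The iterated convolutions are then bounded by log-convex interpolation between Young's bound $\|H^{\ast k}\|_1\le\|H\|_1^k$ and the regular-representation bound $\|H^{\ast k}\|_2\le\|\lambda_G(H)\|^{k-1}\|H\|_2$, yielding
\begin{equation*}
\limsup_{k\to\infty}\|(F^\ast\ast F)^{\ast k}\|_{q'}^{1/k}\le \|F^\ast\ast F\|_1^{\eta/(2+\eta)}\,\|\lambda_G(F^\ast\ast F)\|^{2/(2+\eta)}.
\end{equation*}
Since $\|\phi_v\|_{2+\eta}^{1/k}\to 1$, the spectral-radius formula combined with a density argument over $\cJ$ produces $\|\pi(F^\ast\ast F)\|\le \|F^\ast\ast F\|_1^{\eta/(2+\eta)}\|\lambda_G(F^\ast\ast F)\|^{2/(2+\eta)}$. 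Letting $\eta\to 0$ gives $\|\pi(F)\|^2=\|\pi(F^\ast\ast F)\|\le \|\lambda_G(F)\|^2$, i.e. $\pi\prec\lambda_G$.

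The principal technical obstacle lies in the proof of (1): one must reconcile the spectral-radius supremum (over \emph{all} unit vectors) with the Hölder bound (valid only for $v\in \cJ$), which requires a density/continuity argument exploiting that diagonal matrix coefficients are continuous in $v$ and that $\pi((F^\ast\ast F)^{\ast k})$ is bounded. The tensor-power structure is essential for (2) (to lower the $L^p$-exponent to $L^2$), whereas (1) uses only the spectral-radius limit together with the interpolation estimate — capturing the principle that $L^{2+\eta}$-matrix-coefficient decay on a dense set, for every $\eta>0$, suffices to recover the regular-representation bound in the limit $\eta\to 0$.
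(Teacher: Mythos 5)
The paper does not prove this theorem at all: it is quoted verbatim from the literature, with the proof delegated to \cite{chh}, \cite{Co}, \cite{H} and \cite{HT}. Your argument is essentially a correct reconstruction of those proofs. Part (1) is exactly Theorem 1 of Cowling--Haagerup--Howe: the identity $\|\pi(F)\|^2=\|\pi(F^\ast\ast F)\|$, the spectral-radius formula evaluated against diagonal coefficients $\phi_v$ with $v\in\cJ$, H\"older with the conjugate exponent of $2+\eta$, and the $L^{q'}$-interpolation of $\|h^{\ast k}\|$ between $\|h\|_1^k$ and $\|\lambda_G(h)\|^{k-1}\|h\|_2$ are all in place and the exponents $\eta/(2+\eta)$, $2/(2+\eta)$ check out; your flagged density issue is resolved by the standard spectral-projection argument (for positive $T$ and any dense set $\cJ$, $\sup_{v\in\cJ,\|v\|=1}\lim_k\langle T^kv,v\rangle^{1/k}=\|T\|$). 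Part (2) via multiplicativity of coefficients on simple tensors, generalized H\"older ($L^{p/n}\cap L^\infty\subset L^2$ when $n\ge p/2$), and Godement's factorization is also the standard route.

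The one step you should not wave through is the final "decomposing $\cH^{\otimes n}$ into such cyclic pieces" in part (2). A bare maximality argument does not close: if $\{\xi_\alpha\}$ is a maximal family in the dense subspace $D=\cJ^{\otimes n}_{\mathrm{alg}}$ generating mutually orthogonal cyclic subspaces, the invariant orthocomplement $V$ of their sum need not contain any vector of $D$; density only gives $\xi\in D$ with $P_V\xi\neq 0$, and $\phi_{P_V\xi}(g)=\langle \pi^{\otimes n}(g)\xi,P_V\xi\rangle$ is an \emph{off-diagonal} coefficient with one leg outside $D$, so its square-integrability is not immediate. This is precisely the content of the equivalence in \cite[Thm.~1]{chh}: one first shows that the set $\cH^2=\{v:\phi_v\in L^2(G)\}$ is a $\pi^{\otimes n}(G)$-invariant \emph{linear subspace} (invariance is the easy translation computation; linearity requires a polarization/closed-graph argument, since one must control $\langle\pi(g)v,w\rangle$ for $v,w\in\cH^2$), after which the Zorn decomposition can be run inside $\cH^2$ and does terminate. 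Either supply that lemma or cite it explicitly; with it, your proof is complete and coincides with the sources the theorem is attributed to.
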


We now state the following result, 
which summarizes a number of results 
due to \cite{Co,HM,BW,li,oh2}
 in a form convenient for our purposes.

\begin{Theorem}\label{Lp representations}
{\bf $L^p$-representations. }

Let $F$ be an locally compact 
non-discrete field of characteristic zero. 
Let $G$ denote the group of $F$-rational 
points of an algebraically connected 
semisimple algebraic group which is almost $F$-simple.
Let $\pi$ be a unitary representation of $G$
 without non-trivial finite-dimensional 
$G$-invariant subspaces (or equivalently 
without $G^+$-invariant unit vectors).\footnote{$G^+$ denotes the closed subgroup of $G$ generated by
  one-parameter unipotent subgroups.}
\begin{enumerate}
\item   
If the $F$-rank of $G$ is at least $2$ 
then $\pi$ 
is strongly $L^{p^+}$ 
with $p^+=p^+(G) < \infty$ depending only on $G$. 
\item If the $F$-rank of $G$ is $1$, then 
any unitary representation $\pi$ admitting a spectral 
gap (equivalently, which does not contain an 
asymptotically invariant 
sequence of unit vectors) is strongly $L^{p^+}$ for 
some $p^+=p^+(\pi)< \infty$. In particular, every 
irreducible infinite-dimensional representation 
has this property. 
\item When $G$ is $S$-algebraic group and $\pi$ is such that 
  every simple component has a spectral gap and no non-trivial finite-dimensional subrepresentations,  then $\pi$ is a strongly  $L^{p^+}$-representation. 
\end{enumerate}
\end{Theorem}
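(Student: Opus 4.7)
The plan is to combine three essentially independent families of spectral estimates with the tensor power principle stated in Theorem \ref{tensor}, so that it suffices in each case to produce a pointwise $L^{p^+}$-bound on matrix coefficients of a dense set of vectors.

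For part (1), the higher rank case, I would argue as follows. Since $G$ is almost $F$-simple of $F$-rank at least two, it has Kazhdan's property $(T)$, and moreover every unitary representation without $G^+$-invariant unit vectors is actually mixing by the Howe--Moore theorem. The goal is to upgrade mixing to a quantitative decay of the form $\inn{\pi(g)v,w} \le C(v,w)\,\Xi_G(g)^{2/p^+(G)}$ for all $K$-finite vectors $v,w$, where $\Xi_G$ is the Harish-Chandra spherical function. The strategy, following Cowling, Howe, Li and Oh, is to restrict $\pi$ to a carefully chosen strongly orthogonal system of $\SL_2$-subgroups inside a maximal $F$-split torus, use the classification of unitary representations of $\SL_2(F)$ together with the absence of $G^+$-fixed vectors to rule out the trivial $\SL_2$-component in each restriction, and then interpolate the resulting uniform exponent using the Iwasawa decomposition. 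Since $\Xi_G \in L^{2+\eta}(G)$ for every $\eta>0$, any such $K$-finite matrix coefficient lies in $L^{p^+}$, proving (1).

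For part (2), the rank one case, I would use the explicit classification of the unitary dual of a rank-one semisimple group (of Type I, with spherical dual consisting of the unitary principal series plus a complementary series parametrized by an interval whose endpoint corresponds to the trivial representation). A spectral gap hypothesis on $\pi$ translates, via the direct integral decomposition, into a bound away from that endpoint for the complementary series parameters occurring in $\pi$. The explicit formula for the Harish-Chandra $c$-function then gives an $L^{p^+}$-bound on spherical matrix coefficients, and extension to $K$-finite vectors is standard once a spherical bound is available. Part (3) then follows from (1) and (2) by factorization: for an $S$-algebraic $G = \prod_{v \in S} G_v$, $K$-finite matrix coefficients factor across the product, each factor satisfies a component-wise $L^{p_v^+}$-bound by (1) or (2) applied to the corresponding simple factor, and Fubini yields an $L^{\max_v p_v^+}$-bound on $G$.

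The principal obstacle lies in part (1), where the required pointwise bound must be \emph{uniform} over all $\pi$ without $G^+$-invariant vectors and must apply to every almost $F$-simple group of higher rank, including cases with exceptional complementary series such as $\Sp_{2n}$ and certain exceptional groups; handling these requires Oh's sharp choice of a strongly orthogonal root system optimizing the resulting exponent, which is the technical heart of the argument. In rank one, by contrast, the obstacle is merely to ensure that the spectral gap hypothesis gives a \emph{uniform} distance from the trivial representation in the Fell topology, which is automatic under the assumption stated.
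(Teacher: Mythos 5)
First, note that the paper offers no proof of this theorem: it is stated explicitly as a summary of results from \cite{Co}, \cite{HM}, \cite{BW}, \cite{li}, \cite{oh2}, so your sketch is being compared against the literature it cites rather than against an argument in the text. Your overall architecture --- pointwise bounds on $K$-finite matrix coefficients combined with the tensor-power principle of Theorem \ref{tensor}, with the three parts handled respectively by higher-rank uniform bounds, the rank-one unitary dual, and factorization over places --- is indeed the architecture of those references. Parts (2) and (3) are essentially right, modulo the point in (2) that non-spherical constituents must also be controlled; this is handled by the reduction to bi-$K$-invariant coefficients of $\pi\otimes\pi^c$, exactly as in the proof of Theorem \ref{thm:L1L2}.

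The genuine gap is in part (1), at the step where you propose to obtain the uniform exponent by restricting $\pi$ to the $\SL_2$-subgroups of a strongly orthogonal system and ``ruling out the trivial $\SL_2$-component in each restriction.'' That step cannot produce any quantitative bound: $\SL_2(F)$ does not have property (T), its complementary series accumulate at the trivial representation, and a unitary representation of $\SL_2(F)$ without invariant vectors need not be strongly $L^p$ for any finite $p$ (take a direct integral of complementary series with parameters tending to the endpoint). Hence absence of the trivial component in each restriction is compatible with arbitrarily slow decay, and no $p^+$ depending only on $G$ could be extracted this way. The mechanism that actually powers the higher-rank bounds of Howe, Li and Oh is Kazhdan's semidirect-product argument (Theorem \ref{thm:sl2 restriction} of this paper, equivalently Howe's $N$-rank / Mackey analysis): each root subgroup $H_\beta\cong\SL_2(F)$ of the strongly orthogonal system sits inside a subgroup of the form $\SL_2(F)\rsemi F^k$ with nontrivial unipotent part, the absence of $G^+$-invariant vectors forces the unipotent part to act without invariant vectors, and Mackey theory then yields \emph{temperedness} --- not mere nontriviality --- of $\pi$ restricted to $\prod_\beta H_\beta$. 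The uniform exponent $p^+(G)$ is then read off by comparing the product of the $\Xi_{\SL_2}$-functions with $\delta_G^{-1/2}$ on the positive chamber, which is where Oh's optimization of the system $\Upsilon$ enters. So the route is the right one, but the key lemma you invoke at the crucial point is the wrong one, and the step as written would fail.
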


We remark that an explicit estimate of the best exponent $p^+(G)$ valid for all unitary representation as above when $G$ is simple and has $F$-split rank at least two is given by \cite{H}\cite{Sca} for $\SL_n(F)$ and $\Sp_{2n}(F)$, by  \cite{li}\cite{LZ} in the Archimedean case, and by \cite{oh2} for general $p$-adic groups.  

Let us define $n(G)$ as the least  integer greater or equal to $p^+(G)/2$. 


We can now state the following result which establishes that diagonal embeddings in product groups are often tempered. 

\begin{Theorem}\label{tensor power} Assume that $L$ is an $F$-simple algebraic group defined over $F$ with $F$-rank at least two, as in Theorem \ref{Lp representations}, and assume that $L=L^+$. 
Let $H=\Delta(L)$ be the diagonal embedding of $L$ in $L^{n(L)}=G$. If $\Gamma \subset G$ is an irreducible lattice, then the restriction of the representation $\pi^0_{\Gamma\setminus G}$ of $G$ on $L^2_0(\Gamma\setminus G)$ to $H$ is a tempered representation of $H$. 
\end{Theorem}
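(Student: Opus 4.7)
The strategy combines the factor-wise $L^p$-decay estimates from Theorem \ref{Lp representations} with the tensor power principle from Theorem \ref{tensor}. The geometric point is that the diagonal embedding $\Delta(L) \subset L^{n(L)}$ interacts with the product structure of $G$ in a tensor-product fashion, so that matrix coefficients of $\pi^0_{\Gamma \setminus G}|_{\Delta(L)}$ can be estimated as products of matrix coefficients attached to the individual factor restrictions.

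The first step is to show that the restriction of $\pi^0_{\Gamma \setminus G}$ to each single factor $L_i \subset G = L^{n(L)}$ is strongly $L^{p^+(L)}$. Since $\Gamma$ is an irreducible lattice in the semisimple $S$-algebraic group $G$, Moore's ergodicity theorem asserts that every closed non-compact subgroup of $G$ acts ergodically on $\Gamma \setminus G$. In particular, $L_i$ (non-compact, since $L$ is) has no non-zero invariant vectors in $L^2_0(\Gamma \setminus G)$; using the hypothesis $L = L^+$, this rules out finite-dimensional $L_i$-invariant subspaces as well. Since $L$ has $F$-rank at least two, Theorem \ref{Lp representations}(1) then yields that $\pi^0_{\Gamma \setminus G}|_{L_i}$ is strongly $L^{p^+(L)}$.

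The second step decomposes $\pi^0_{\Gamma \setminus G}$, viewed as a unitary representation of the product group $G = L^{n(L)}$, as a direct integral of external tensor products $\sigma_1^\alpha \boxtimes \cdots \boxtimes \sigma_{n(L)}^\alpha$ of irreducible representations of $L$. The factor-wise estimate from the first step forces, for $\mu$-almost every $\alpha$, each $\sigma_i^\alpha$ to be non-trivial, hence strongly $L^{p^+(L)}$ again by Theorem \ref{Lp representations}(1). Now restriction to $\Delta(L)$ turns each external tensor product into the internal tensor product $\sigma_1^\alpha \otimes \cdots \otimes \sigma_{n(L)}^\alpha$ as a representation of $L$. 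Matrix coefficients on simple tensors factor as
\[
\langle (\sigma_1^\alpha \otimes \cdots \otimes \sigma_{n(L)}^\alpha)(h)\, v_1 \otimes \cdots \otimes v_{n(L)},\, v_1 \otimes \cdots \otimes v_{n(L)} \rangle = \prod_{i=1}^{n(L)} \langle \sigma_i^\alpha(h) v_i, v_i \rangle,
\]
and each factor on the right lies in $L^{p^+(L)+\eta}(L)$ for every $\eta > 0$. Hölder's inequality places the product in $L^{(p^+(L)+\eta)/n(L)}(L)$, which by our choice $n(L) \ge p^+(L)/2$ is contained in $L^{2+\eta'}(L)$ for an $\eta'$ that tends to $0$ with $\eta$. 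Thus the restriction of each summand $\sigma_1^\alpha \boxtimes \cdots \boxtimes \sigma_{n(L)}^\alpha$ to $\Delta(L)$ is strongly $L^{2^+}$, hence weakly contained in $\lambda_L$ by Theorem \ref{tensor}(1); weak containment is stable under direct integrals, so $\pi^0_{\Gamma \setminus G}|_H \prec \lambda_L$, which is the claimed temperedness.

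The main technical obstacle is making the direct integral decomposition rigorous and ensuring that the strongly $L^{p^+(L)}$ property passes to $\mu$-almost every tensor factor, rather than only to the factor-wise restriction of the full representation. A cleaner alternative that avoids Mackey-theoretic bookkeeping is to apply Theorem \ref{tensor}(2) directly to the strongly $L^{p^+(L)}$ representation $\pi^0_{\Gamma \setminus G}|_{L_i}$ and then argue that the resulting $n(L)$-fold tensor power dominates, in the sense of matrix coefficients on a dense subspace of $K$-finite vectors (with $K$ a product of maximal compacts), the diagonal action on $L^2_0(\Gamma \setminus G)$; this converts the problem into the bound $n(L) \ge p^+(L)/2$ and yields temperedness in one stroke.
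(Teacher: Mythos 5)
Your proposal is correct and follows essentially the same route as the paper: decompose $\pi^0_{\Gamma\setminus G}$ into a direct integral of external tensor products of irreducibles of $L$, use irreducibility of $\Gamma$ together with Howe--Moore mixing to see that almost every tensor factor is infinite-dimensional and hence strongly $L^{p^+(L)}$ by Theorem \ref{Lp representations}, multiply matrix coefficients on the diagonal to land in $L^{2+\eta}(L)$, and invoke Theorem \ref{tensor}(1) plus stability of weak containment under direct integrals. The only differences are cosmetic: the paper leaves the H\"older step implicit, and your preliminary factor-wise $L^{p^+}$ estimate is a slightly roundabout substitute for the paper's direct appeal to mixing of each factor $L_i$ on $\Gamma\setminus G$.
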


\begin{proof} Choose any $n=n(L)$ unitary representations $\pi_1,\dots \pi_n$ of $L$ which are irreducible and infinite dimensional, and consider the representation $\tau=\pi_1\otimes \pi_2\otimes \cdots \otimes \pi_n$ as a representation of $G=L^n$. Now restrict $\tau$ to the subgroup $\Delta(L)=H\subset G=L^n$, where $\Delta(L)$ is the diagonally embedded subgroup in $L^n$. The restriction of $\tau$ to $H$ has a dense subspace of matrix coefficients which are in $L^{2+\eta}(H)$, for every $\eta > 0$. Hence the restriction of the representation $\tau$ to the subgroup $H$ is weakly contained in the regular representation $\lambda_H$ of $H$, by Theorem \ref{tensor}(1).  

Let us apply this fact to the representation of $G$ on $L^2_0(\Gamma\setminus G)$, where $\Gamma\subset L^n=G$ is an irreducible lattice. 
The representation of $G$ on $L^2_0(\Gamma\setminus G)$ decomposes to a direct integral of irreducible unitary representations of $G$ of the form $\pi_1\otimes \pi_2\otimes \cdots \otimes \pi_n=\tau $, with each $\pi_i$ an irreducible infinite dimensional unitary representation of $L=L^+$. This follows since the lattice $\Gamma$ is irreducible, and so each of the direct summands in the direct sum $L^n$ is mixing on $\Gamma \setminus G$. Hence the restriction of each such representation $\tau$ to $H$ is a tempered representation of $H$. Since the irreducible constituents $\tau$ in the direct integral decomposition of $\pi^0_{\Gamma\setminus G}$ are tempered when restricted to the subgroup $H$, it follows that their direct integral, the representation on $L^2_0(\Gamma \setminus G)$ is also tempered when restricted to $H$.  
\end{proof}
Some concrete examples of tempered subgroup that arise are as follows.

\begin{enumerate} 
\item 
Let $H=\Delta(\SL_3(F))\subset \SL_3(F)\times \SL_3(F)=G$ with $\Delta(\SL_3(F))$ being the diagonally embedded copy of $L=\SL_3(F)$. Then $H$ is a $\pi_{\Gamma\setminus G}^0$-tempered subgroup of $G$. 
Indeed $\SL_3(F)$ has property $T$, with  $n({\SL_3(F)})=2$, so that Theorem \ref{tensor power} applies.  

\item More generally, if $H$ is simple and $H\subset \Delta(\SL_n(F))\subset \SL_n(F)^{k}=G$, where $n \ge 3$ and $k \ge n-1$, then $H$ is a $\pi_{\Gamma\setminus G}^0$-tempered subgroup, for any irreducible lattice $\Gamma$. Here $\Delta(\SL_n(F))$ is the diagonal embedded copy of $\SL_n(F)$ into a product of $k$ copies of $\SL_n(F)$. 
 Indeed, in general $n(\SL_n(F))=n-1$, for any $n \ge 3$ and Theorem \ref{tensor power} applies.
   \end{enumerate}

\subsection{Universal pointwise bounds : Harish Chandra function} 
\subsubsection{Spectral estimates on Iwasawa groups}
Let us recall the discussion from \cite{GN1}, and begin with the following general definition of groups with an Iwasawa decomposition, which we call Iwasawa groups. Besides semisimple linear algebraic groups, note that this class includes for example algebraic groups of affine transformations of Euclidean spaces or nilpotent groups, with semisimple Levi component. 

\begin{Def}{\bf Groups with an Iwasawa decomposition.}
\begin{enumerate}\item 
An lcsc group $G$ has an {\it Iwasawa decomposition} \index{Iwasawa decomposition}
if it has two closed
amenable subgroups $K$ and $P$, with $K$ compact and $G=KP$.

\item The {\it Harish-Chandra $\Xi$-function} \index{Harish-Chandra function} associated with the 
Iwasawa decomposition $G=KP$ of the unimodular group $G$ 
is given by 
$$\Xi(g)=\int_K \delta_G^{-1/2}(gk)dk$$
where $\delta_G$ is the left modular function of 
$P$, extended to a left-$K$-invariant
function on $G=KP$. (Thus if $m_P$ is left Haar measure on $P$, 
$\delta_G(p) m_P$ is right $P$-invariant, and Haar measure on $G$ is given by 
$dm_G=dm_K \delta_G(p) dm_P$).
\end{enumerate}
\end{Def}

{\bf Convention}. The definition of an Iwawasa group 
involves a choice of a 
compact subgroup and an amenable 
subgroup. 
When $G$ is the the group of $F$-rational points 
of a semisimple algebraic 
group defined over a locally compact non-discrete field $F$, 
$G$ does admit an Iwasawa decomposition, and 
we can and will always choose below $K$ to be 
a good maximal compact subgroup,  
and $P$ a corresponding minimal 
$F$-parabolic group (see \cite{Marg} or \cite{Si}). 
  This choice 
will be naturally extended 
in the obvious way to $S$-algebraic groups.

Let now $G$ be an lcsc group, $
K$ a compact subgroup, and $\pi : G\to \mathcal{U}(\mathcal{H})$
be a strongly continuous unitary representation, 
where $\cU(\cH)$ is
the unitary group of the Hilbert space $\cH$. We recall the following basic spectral estimates for general 
Iwasawa groups, which were stated in \cite{GN1} and constitute a natural generalization of \cite{chh}.

\begin{Theorem}\cite{GN1}\label{Kfinite estimate}
Let $G=KP$ be a unimodular lcsc 
group with an Iwasawa decomposition,  
and $\pi$ a strongly continuous
unitary representation of $G$. Let 
$v$ and $w$ be two $K$-finite vectors, and denote the dimensions of their spans
under $K$ by $d_v$ and $d_w$. Then the following estimates hold, 
where $\Xi$ is the Harish-Chandra $\Xi$-function.
\begin{enumerate}
\item If $\pi$ is strongly $L^{2+\eta}$ for every $\eta > 0$, then $\pi$ is weakly contained in the regular representation of $G$. 
\item 
 If $\pi$  
is weakly contained in the regular representation, then 
$$\abs{\inn{\pi(g)v,w}}\le \sqrt{d_v d_w}
\norm{v}\norm{w}\Xi(g)\,.$$

\item If $\pi$ is strongly $L^{2k+\eta}$ for 
all $\eta > 0$, then 
$$\abs{\inn{\pi(g)v,w}}\le
 \sqrt{d_v d_w}\norm{v}\norm{w}\Xi(g)^{\frac1k}\,.$$

\end{enumerate}
\end{Theorem}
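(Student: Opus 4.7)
My plan is to prove the three parts in the order (2), (1), (3), since the central content is the Harish-Chandra bound in (2); part (1) then follows from a Cowling--Haagerup--Howe style argument, and (3) is deduced from (1) and (2) by tensoring.

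\textbf{Part (2): the Harish-Chandra bound.} Weak containment in $\lambda_G$ means every matrix coefficient of $\pi$ is a pointwise limit of convex combinations of diagonal matrix coefficients of $\lambda_G$, so by a standard approximation argument it suffices to establish the estimate for $v,w$ $K$-finite vectors in $\lambda_G$ itself. First I would handle the case where $v$ and $w$ are both $K$-invariant unit vectors in $L^2(G)$. Writing $g=kp$ via the Iwasawa decomposition and using that a $K$-invariant unit vector $\phi \in L^2(G)$ satisfies $\phi(kp)=\phi(p)$ a.e., the Cauchy--Schwarz inequality combined with the change of variable $\int_G = \int_K \int_P \delta_G(p)\,dk\,dm_P(p)$ yields
$$
|\langle \lambda_G(g)\phi_1,\phi_2\rangle| \;\le\; \int_K \delta_G^{-1/2}(gk)\,dk \;=\; \Xi(g),
$$
which is the desired bound for $K$-invariant unit vectors. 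For $K$-finite vectors I would project onto $K$-isotypic components: write $v=\sum_i v_i$, $w=\sum_j w_j$ with $v_i, w_j$ lying in $K$-irreducible subspaces of dimensions $\dim(\tau_i)$, $\dim(\tau_j)$. Averaging $\langle \pi(g)v,w\rangle$ over $K\times K$ against characters of the $K$-types and invoking the Schur orthogonality relations reduces the matrix coefficient to its $K$-spherical average, at the cost of factors $\sqrt{\dim(\tau_i)\dim(\tau_j)}$. Summing over $i,j$ and applying Cauchy--Schwarz on the $K$-type expansion produces the total factor $\sqrt{d_v d_w}\,\|v\|\|w\|$, giving (2).

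\textbf{Part (1): strong $L^{2+\eta}$ implies weak containment in $\lambda_G$.} Fix $f\in C_c(G)$ and $K$-finite vectors $v,w$ in a dense subspace $\mathcal J$ where the matrix coefficient $\phi_{v,w}(g)=\langle \pi(g)v,w\rangle$ lies in $L^{2+\eta}(G)$ for every $\eta>0$. Writing $\langle\pi(f)v,w\rangle = \int_G f(g)\phi_{v,w}(g)\,dm_G(g)$ and applying H\"older with the Kunze--Stein-type estimate that convolution against $L^{2+\eta}$ functions is controlled by the $\lambda_G$-operator norm, I can bound $|\langle\pi(f)v,w\rangle|\le C(\eta)\|v\|\|w\|\|\lambda_G(f)\|$. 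Letting $\eta\to 0$ and using density of $\mathcal J$ gives $\|\pi(f)\|\le \|\lambda_G(f)\|$ for all $f\in L^1(G)$, which is precisely weak containment.

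\textbf{Part (3): tensor power.} Given $\pi$ strongly $L^{2k+\eta}$, form the $k$-fold tensor power $\pi^{\otimes k}$. For $v\in\mathcal J$ in the dense $L^{2k+\eta}$ subspace, the vector $v^{\otimes k}$ yields the diagonal matrix coefficient $\phi_{v,v}(g)^k$, which lies in $L^{2+\eta/k}(G)$; moreover, $v^{\otimes k}$ is $K$-finite (under the diagonal $K$-action) with $K$-span of dimension at most $d_v^k$. By part (1) the representation $\pi^{\otimes k}$ is weakly contained in $\lambda_G$, so part (2) applied to $v^{\otimes k},w^{\otimes k}$ gives
$$
|\langle\pi(g)v,w\rangle|^k \;=\; |\langle \pi^{\otimes k}(g)v^{\otimes k},w^{\otimes k}\rangle| \;\le\; (d_v d_w)^{k/2} \|v\|^k\|w\|^k \Xi(g).
$$
Extracting the $k$-th root yields (3).

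\textbf{Main obstacle.} The subtlest point is the reduction in part (1): unlike the classical semisimple setting of Cowling--Haagerup--Howe, the hypothesis here is only an Iwasawa decomposition, so the Kunze--Stein phenomenon is not directly available. The critical step is therefore to prove that in a general Iwasawa group the operator norm of convolution by $f$ on $L^2$ dominates the action of $f$ on any strongly $L^{2+\eta}$ representation; this must be done either by lifting through the $K$-spherical decomposition and using the explicit $\Xi$-function bound from (2) as an input, or by exploiting the amenability of $P$ to reduce matters to a positive-definite estimate on $P$. Part (2) has a milder but similar obstacle: verifying that the $K$-isotypic projections can be carried out inside an abstract representation weakly containing $\lambda_G$ requires some care in approximating abstract matrix coefficients uniformly on compacta.
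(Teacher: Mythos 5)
First, note that the paper does not prove this theorem: it is quoted from \cite{GN1} and is the natural generalization of the Cowling--Haagerup--Howe theorem \cite{chh} to Iwasawa groups, so I am comparing your proposal against that standard argument. Your treatment of parts (2) and (3) follows exactly that route and is correct in outline: (2) is the Cauchy--Schwarz/Iwasawa computation for $K$-invariant vectors in $\lambda_G$, transferred by weak containment and extended to $K$-finite vectors by isotypic decomposition (the $\sqrt{d_vd_w}$ bookkeeping and the approximation of abstract positive-definite functions by ones coming from $\infty\cdot\lambda_G$ while preserving $K$-finiteness are exactly the points one must check, and you have flagged them); and (3) is precisely the tensor-power reduction, with the one remark that to invoke (1) for $\pi^{\otimes k}$ you should observe that matrix coefficients of finite sums of elementary tensors from $\mathcal J^{\otimes k}$ are in $L^{2+\eta}(G)$ by H\"older, so that the $L^{2+\eta}$ coefficients live on a dense subspace of the full tensor product, not just on the elementary tensors $v^{\otimes k}$.

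The genuine gap is part (1). What you propose there --- H\"older against a ``Kunze--Stein-type estimate that convolution against $L^{2+\eta}$ functions is controlled by the $\lambda_G$-operator norm'' --- is essentially a restatement of the conclusion, and you correctly sense this in your ``main obstacle'' paragraph; but both fallback strategies you offer are wrong. Using the $\Xi$-bound of (2) as input is circular (part (2) presupposes weak containment in $\lambda_G$, which is what (1) must produce), and amenability of $P$ plays no role. The actual proof of (1) requires \emph{no} geometric input at all --- no Iwasawa decomposition, no Kunze--Stein phenomenon --- and is valid for every unimodular lcsc group. It is the convolution-power argument of \cite[Thm.~1]{chh}: for $f\in C_c(G)$ with $f=f^*\ge 0$ and $v\in\mathcal J$ with $\phi_{v,v}=\inn{\pi(\cdot)v,v}\in L^{p}(G)$, $p=2+\eta$, one writes
\begin{equation*}
\norm{\pi(f)^n v}^2=\inn{\pi(f^{(2n)})v,v}\le \norm{f^{(2n)}}_{p'}\norm{\phi_{v,v}}_{p}
\le \norm{f^{(2n)}}_{1}^{1-2/p}\,\norm{f^{(2n)}}_{2}^{2/p}\,\norm{\phi_{v,v}}_{p},
\end{equation*}
where $f^{(2n)}$ is the $2n$-fold convolution power; since $\norm{f^{(2n)}}_1=\norm{f}_1^{2n}$ and $\norm{f^{(2n)}}_2\le\norm{\lambda_G(f)}^{2n-1}\norm{f}_2$, taking $2n$-th roots and letting $n\to\infty$ (using that $\lim_n\norm{\pi(f)^nv}^{1/n}$ is the norm of the self-adjoint operator $\pi(f)$ on the cyclic subspace of $v$) yields $\norm{\pi(f)}\le\norm{f}_1^{1-2/p}\norm{\lambda_G(f)}^{2/p}$; letting $\eta\to 0$ gives $\norm{\pi(f)}\le\norm{\lambda_G(f)}$, which is weak containment. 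Without this mechanism your part (1) does not close, and since part (3) is deduced from (1), the gap propagates there as well.
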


Let us note the following : 
\begin{enumerate}
\item The quality of the estimate in  
Theorem \ref{Kfinite estimate} varies with the structure of $G$.
 For example, if $P$ is normal in $G$ (so that  
 $G$ is itself amenable) then $P$ is unimodular if $G$ is. Then 
$\delta_G(g)=1$ for $g\in G$ 
 and the estimate is trivial. 
 \item For semisimple algebraic groups over $F$ the $\Xi$-function 
is in fact in $L^{2^+}$, a well-known result due to Harish-Chandra. 
\item Theorem \ref{Kfinite estimate} 
will be most useful when the Harish-Chandra
function is indeed in some $L^{p^+}(G)$, $p^+ < \infty$, 
so that Theorem 
\ref{tensor} applies. We note that in \cite{GN2}, it was established that the Harish Chandra function 
of the adele groups of semi simple algebraic groups have this property, with $p^+=4$. 

\item When $\Xi$ is in some $L^q$, $q < \infty$, Theorem 
\ref{Kfinite estimate}(1) implies that any  
representation a tensor power of which is 
weakly contained in the regular representation is strongly 
$L^p$ for some $p$.
\end{enumerate}

Theorem \ref{Kfinite estimate} is a universal pointwise bound for matrix coefficients of unitary representation of Iwasawa groups. When $G$ is a realsemisimple algebraic group the Harish Chandra $\Xi_G$ function admits an explicit  two-sided estimate in terms of  $\delta_G$, as follows. We denote by $\tilde{\delta}_G$ the $W$-invariant function extending $\delta_G$ from $A^+$ to all of $A$, where $W$ is the Weyl group of $A$ in $G$. Then 
\begin{equation}\label{eq: modular-funct}
B_1(G) \tilde{\delta}_G(g)^{-1/2} \le \Xi_G(g) \le B_2(G) \tilde{\delta}_G^{-1/2}(g)(1+\norm{g})^{d(G)}\end{equation}
where $\log \norm{g}=\text{dist} (gK,K)$, $\text{dist}$ the $G$-invariant Riemannian distance given by the Killing form on symmetric space $G/K$ (or a suitable $G$-invariant distance on the Bruhat-Tits building in the local field case). 
The modular function $\delta_G$ on $A$ is given by the product of all the characters associated with the roots of an $F$-split torus $A\subset P$, namely by the character associated with the sum of all positive roots of $A$ in $G$.
Finally, $d(G)$ is a constant depending only on $G$, and bounded by the number of roots. 
Defining $n(p)$ to be the least integer satisfying $n(p)\ge p/2$, and keeping our notation above, we can now state the following explicit integral criterion for subgroup temperedness. 

\begin{Prop}\label{pts-bounds}
Let $H$ be a closed subgroup of an Iwasawa group $G$, and $\pi$ an $L^{p^+}$-representation of $G$. 
\begin{enumerate}
\item  If $\Xi_G^{1/n(p)}$ restricted to $H$ is in $L^{2+\eta}$ for every $\eta > 0$, then $(G,H,\pi)$ is a tempered triple, 
i.e. $\pi$ restricted to $H$ is weakly contained in $\infty \cdot \lambda_H$.
\item Let $G$ be a semisimple algebraic group,  $H$ a semisimple algebraic $F$-subgroup, and choose the Cartan and Iwasawa decomposition of $H$ to be the restrictions of those chosen on $G$.
If the product  $\tilde{\delta}_G^{-1/n(p)}\cdot \delta_H$ is in $L^{2+\eta}(A_H^+)$ for every $\eta>0$ (w.r.t Haar measure on the positive Weyl chamber $A_H^+\subset A\cap H$), then $(G,H,\pi)$ is a tempered triple.  

\end{enumerate}
\end{Prop}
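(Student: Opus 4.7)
The plan is to deduce the two parts in sequence: part (1) by combining the universal pointwise bound on matrix coefficients with the integrability hypothesis, and part (2) from (1) via a polar-coordinate computation on $H$.

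For part (1), since $\pi$ is strongly $L^{p^+}$ and $n(p)\ge p/2$, $\pi$ is also strongly $L^{2n(p)+\eta}$ for every $\eta>0$. Theorem \ref{Kfinite estimate}(3), applied with $k=n(p)$, therefore yields the pointwise bound
\[
|\langle \pi(g)v,w\rangle|\le \sqrt{d_v d_w}\,\|v\|\,\|w\|\,\Xi_G(g)^{1/n(p)},\qquad g\in G,
\]
on the dense subspace of $K$-finite vectors, where $K$ is the Iwasawa maximal compact subgroup of $G$. Restricting the group variable to $h\in H$, the hypothesis $\Xi_G^{1/n(p)}|_H\in L^{2+\eta}(H)$ for all $\eta>0$ forces the matrix coefficients of $\pi|_H$ on this dense subspace to lie in $L^{2+\eta}(H)$. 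Thus $\pi|_H$ is strongly $L^{2+\eta}$ on the lcsc group $H$ for every $\eta>0$, and Theorem \ref{tensor}(1), which requires no Iwasawa structure on $H$, gives weak containment of $\pi|_H$ in $\lambda_H$, hence in $\infty\cdot\lambda_H$. So $(G,H,\pi)$ is a tempered triple.

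For part (2) we apply part (1), so it suffices to verify $\Xi_G^{1/n(p)}|_H\in L^{2+\eta}(H)$ for all $\eta>0$. Using the Cartan decomposition $H=K_H A_H^+ K_H$, with $K_H=K\cap H$ and $A_H\subset A$ by assumption, Haar measure on $H$ decomposes as $dh=c\,J_H(a)\,da\,dk_1\,dk_2$ with $J_H(a)\asymp \delta_H(a)$ up to polynomial corrections near the walls. Since $\Xi_G$ is bi-$K$-invariant, its restriction to $H$ is bi-$K_H$-invariant, so
\[
\int_H \Xi_G(h)^{(2+\eta)/n(p)}\,dh\asymp\int_{A_H^+}\Xi_G(a)^{(2+\eta)/n(p)}\,\delta_H(a)\,da.
\]
The upper estimate in \eqref{eq: modular-funct} bounds this, up to constants and polynomial factors in $\|a\|$, by $\int_{A_H^+}\tilde{\delta}_G(a)^{-(2+\eta)/(2n(p))}\delta_H(a)\,da$.

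In logarithmic coordinates $X=\log a$, one has $\tilde{\delta}_G(a)=e^{2\rho_G(X)}$ and $\delta_H(a)=e^{2\rho_H(X)}$, so both the integral we need to control and the integral appearing in the hypothesis of (2) reduce, for each $\eta>0$, to the same type of linear condition on the open cone $A_H^+$. Explicitly,
\[
\int_{A_H^+}\bigl(\tilde{\delta}_G^{-1/n(p)}\delta_H\bigr)^{2+\eta}\,da=\int_{A_H^+}e^{2(2+\eta)(\rho_H-\rho_G/n(p))(X)}\,dX
\]
is finite for every sufficiently small $\eta>0$ if and only if $\rho_H(X)<\rho_G(X)/n(p)$ strictly on $A_H^+\setminus\{0\}$, and exactly the same strict linear inequality guarantees finiteness of the integral $\int_{A_H^+}\tilde{\delta}_G^{-(2+\eta)/(2n(p))}\delta_H\,da$ arising from Theorem \ref{Kfinite estimate}(3). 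The only real technical point, and the main place to be careful, is to absorb the polynomial corrections coming from \eqref{eq: modular-funct} and from the behaviour of $J_H$ near the walls of $A_H^+$ into an arbitrarily small loss in the exponent; this is exactly the flexibility provided by the quantifier ``for every $\eta>0$'' in the hypothesis of (2).
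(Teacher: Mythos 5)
Your proof is correct and follows essentially the same route as the paper: part (1) via the pointwise bound $|\langle\pi(g)v,w\rangle|\le\sqrt{d_vd_w}\|v\|\|w\|\,\Xi_G(g)^{1/n(p)}$ from Theorem \ref{Kfinite estimate}(3) combined with the Cowling--Haagerup--Howe criterion (Theorem \ref{tensor}(1)) and density of $K$-finite vectors, and part (2) by passing to Cartan coordinates on $H$ and comparing exponents using the two-sided estimate (\ref{eq: modular-funct}). Your explicit logarithmic-coordinate comparison showing that both the hypothesis and the needed integral reduce to the strict inequality $\rho_H<\rho_G/n(p)$ on $A_H^+$ is in fact a cleaner bookkeeping of the exponents than the paper's own write-up.
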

\begin{proof}

1) Using Theorem \ref{Kfinite estimate} and the definition of $n(p)$,  the assumption implies that the
$K$-finite matrix coefficients of $\pi$ are in $L^{2+\eta}(H)$ for every $\eta>0$ when restricted to $H$. By \cite{chh}, a representation which has a dense set of matrix coefficients which are in $L^{2+\eta}$ is weakly contained in a multiple of the regular representation. Finally, $K$-finite vectors are dense in any unitary representation. 
 
2) We need only verify that the restriction of a $K$-finite matrix coefficient of $G$ to $A_H^+$ is in $L^{2+\eta}$ w.r.t. the density arising on $A_H^+$ by writing Haar measure on $H$ in Cartan coordinates. The latter density is bounded on $A_H^+$ by $\delta_H^{2+\eta}$. Using Theorem \ref{Kfinite estimate} again, the matrix coefficients in question are bounded on $A^+$ by (for all $\eta > 0$)
$$\Xi_G^{1/n(p)}\le \left(B_G(1+\norm{\cdot})^{d(G)}\delta_G\right)^{-1/n(p)}\le B(G,p, \eta)\delta_G^{-1/n(p)+\eta}.$$
The function $\Xi_G$ is invariant under the Weyl group of $A$ in $G$, and thus when we extend the r.h.s. of the estimate above so as to be invariant under the Weyl group, we obtain an estimate of $\Xi_G$ on all of $A$. 
Using the matching Cartan decompositions of $H$ and $G$, we restrict this estimate to $A_H^+$, and conclude that integrability (w.r.t. Haar measure) of $\left(\tilde{\delta}_G^{-1/n(p)}\cdot\delta_H\right)^{2+\eta}$ on $A_H^+$ 
implies the (almost) square-integrability of the matrix coefficients restricted to $H$. 
\end{proof}

\subsection{Universal pointwise bounds using strongly orthogonal systems}

Another universal pointwise bound for $K$-finite matrix coefficients was developed by Howe \cite{H} and Howe and Tan \cite{HT}. To state it for $\SL_n(\R)$, let $\hat{a}=(a_1,\dots,a_n)$ parametrize the element of the diagonal group $A\subset \SL_n(\R)$, with $A^+$ being given by the condition $a_1\ge a_2,\dots, \ge a_n$. We let $\Xi_2$ denote the Harish Chandra $\Xi$-function of $\SL_2(\R)$, 
and define the following bi-$K$-invariant function on $\SL_n(\R)$ : 
\begin{equation}\label{HT-Psi}\Psi(k_1\hat{a}k_2)=\min_{i\neq j}\Xi_2\left(\begin{matrix}\sqrt{a_i/a_j}& 0\\ 0 & \sqrt{a_j/a_i}\end{matrix}\right)\end{equation}
Then the following universal pointwise bound holds \cite{HT}. For any two $K$-finite vectors $u$ and $v$ in a unitary representation $\pi$ of $\SL_n(\R)$ without invariant unit vectors,  
\begin{equation}\label{HT-estimate}\abs{\inn{\pi(\hat{a})u,v}}\le \sqrt{d_ud_v}\norm{u}\norm{v}\Psi(\hat{a})\,.\end{equation} 

The foregoing construction uses some naturally embedded copies of $\SL_2(\R)$ in $\SL_n(\R)$, namely 
$H_{i,j}$ stabilizing the plane spanned by the standard basis vectors $e_i,e_j$ and acting trivially on the other basis vectors. It then applies the fact that the unitary representations of $\SL_2(\R)\rsemi \R^2$ are tempered when restricted to $\SL_2(\R)$, provided the representation has no $\R^2$-invariant unit vectors. 

More generally, it is possible to use a family of commuting copies of $\SL_2(\R)$ in a semisimple group $G$ in order to bound the matrix coefficient by an expression involving the Harish Chandra functions of the $\SL_2(\R)$ subgroups. A commuting family of such subgroup arises from any strongly  orthogonal root system $\Upsilon=\set{\beta_1,\dots,\beta_k}$ in the root system of $G$, namely a system satisfying that $\beta_i\pm \beta_j$ is not a root for $1\le i\neq j\le k$. This construction was elaborated  systematically for all simple algebraic groups over local fields in \cite{oh2}. It produces, for each such $G$, and for each strongly orthogonal root system $\Upsilon$, a bi-$K$-invariant function $\xi_{G,\Upsilon}$ satisfying \cite[Thm. 1.1]{oh2}: 
$$\abs{\inn{\pi(g)v,w}}\le  B_G \sqrt{d_v d_w} \norm{v}\norm{w} \xi_{G,\Upsilon}(g)$$
for every $K$-finite matrix coefficient as above. The function $\xi_{G,\Upsilon}$ in question 
is given on $A^+$ by a product of the $\Xi_2$-functions associated with the collection of subgroups $H_\beta$, $\beta\in \Upsilon$, each of which is isomorphic to $\SL_2(F)$ or $P\SL_2(F)$. 

Thus, when the restriction of $\xi_{G,\Upsilon}$ to a closed subgroup $H\subset G$ is in $L^{2+\eta}(H)$
for every $\eta>0$, the subgroup $H$ is tempered in $G$. When $H$ is a semisimple  algebraic
$F$-subgroup, and we choose the Cartan and Iwasawa decompositions of $H$ to be the restrictions of those
chosen on $G$, the integrability condition amounts to $\xi_{G,\Upsilon}\cdot \delta_H$  restricted to
$A^+_H$ being in $L^{2+\eta}(H)$  for every $\eta>0$ w.r.t. Haar measure on $A_H$. Again this condition can be phrased in terms of the characters associated with the strongly orthogonal root system when restricted to $A^+_H$, and is a different condition than the one provided by the restriction of the $\Xi_G^{1/n(p)}$-function to $A^+_H$.

In particular if $G(F)$ be simple, let $H\subset G$ be the subgroup $H=\prod_{\beta\in \Upsilon} H_\beta$  associated with a strongly orthogonal system $\Upsilon$ of roots in the root system of $G$ over $F$. Then $H$ is a tempered subgroup of $G$, as follows from \cite{HT}\cite{H} for the real and complex case, and \cite{oh2} more generally.

\subsection{Subgroups which are $(G, K)$ tempered in $L^1$ are tempered}

 In \cite{Ma} and \cite{oh2} several examples were given of subgroups $H\subset G$ which are $(G,K)$-tempered in $L^1$, where both $G$ and $H$ are semisimple. As we shall now prove, all of these examples are of tempered subgroups in the sense of the present paper. In many cases, both facts follow from a straightforward application of a suitable universal pointwise bound of $K$-finite matrix coefficients of $G$ restricted to $H$.
 We note that the condition that all $K$-invariant matrix coefficients of irreducible unitary representations of $G$ being in $L^{2+\eta}(H)$ for all $\eta > 0$ is of course  weaker that being in $L^1(H)$. It is strictly weaker, since for  example,   the embedding of 
$\SL_2(\R)\subset \SL_n(\R)$ in the upper left corner is not $(G,K)$-tempered in $L^1$ (see e.g. example
5.3 in \cite{oh2}), but it is tempered in the usual sense by the basic Kazhdan argument. Thus the
$K$-invariant matrix coefficients in this case are $L^{2+\eta}(H)$ for every $\eta>0$, but not in $L^1(H)$. In general we have 

\begin{Theorem}\label{thm:L1L2}  Let $H$ be a closed subgroup of the semisimple group $G$. If $H$ is 
$(G,K)$-tempered in $L^1$, then $H$ is a tempered subgroup of $G$.
\end{Theorem}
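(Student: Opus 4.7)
The plan is to reduce the statement to the Cowling--Haagerup--Howe criterion (cf.\ \cite{chh} and Theorem \ref{Kfinite estimate}(1)), namely that a unitary representation $\rho$ of $H$ is weakly contained in $\infty\cdot\lambda_H$ as soon as it admits a dense subspace on which the matrix coefficients all lie in $L^{2+\eta}(H)$ for every $\eta>0$. Applied with $\rho=\pi|_H$ and with the dense subspace of $K$-finite vectors, the task becomes: show that whenever $\pi$ is a unitary representation of $G$ without invariant unit vectors, the $K$-finite matrix coefficients of $\pi$, restricted to $H$, belong to $L^{2+\eta}(H)$ for every $\eta>0$.

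First, recall that the $(G,K)$-tempered in $L^1$ hypothesis means precisely $\Xi_G|_H\in L^1(H)$. Since $\Xi_G$ is a diagonal matrix coefficient of a tempered spherical representation with a unit vector, it satisfies $0<\Xi_G\le 1$. Combining $\Xi_G|_H\in L^1(H)$ with $\Xi_G\le 1$ gives at once that $\Xi_G|_H\in L^p(H)$ for every $1\le p<\infty$; in particular $\Xi_G^{1/k}|_H\in L^{q}(H)$ for every integer $k\ge 1$ and every $q<\infty$.

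Next, let $\pi$ be any unitary representation of $G$ without invariant unit vectors. By Theorem \ref{Lp representations}, upon decomposing $\pi$ as a direct integral of irreducibles (and separating the finitely many simple factors for $S$-algebraic $G$), each irreducible component is strongly $L^{p^+}$ for some finite exponent. By Theorem \ref{Kfinite estimate}(3), each such irreducible component admits the pointwise bound
\[
|\langle \pi(g)v,w\rangle|\le \sqrt{d_v d_w}\,\|v\|\|w\|\,\Xi_G(g)^{1/k}
\]
on $K$-finite vectors $v,w$, for a suitable integer $k=k(\pi)$. Restricting to $H$ and invoking the previous paragraph, this bound shows that the $K$-finite matrix coefficients of each irreducible component, restricted to $H$, lie in $L^{2+\eta}(H)$ for every $\eta>0$. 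Passing back through the direct integral (using that $K$-finite matrix coefficients are themselves direct-integral assemblies, with bounds uniform in $L^{2+\eta}(H)$-norm on $K$-isotypes of fixed dimension) transfers the estimate to $\pi$ itself. The criterion of \cite{chh} recalled at the outset then yields $\pi|_H\prec_{\mathrm{w}} \infty\cdot\lambda_H$, which is precisely the statement that $H$ is a tempered subgroup of $G$.

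The main obstacle I expect is the last step: handling the passage from irreducible constituents to a general direct integral. One must check that the exponent $k$ provided by the $L^{p^+}$-bound of Theorem \ref{Lp representations} can be taken uniformly over the direct-integral decomposition, or alternatively argue fibrewise combined with the fact that weak containment in $\infty\cdot\lambda_H$ is stable under direct integrals. In the semisimple setting this uniformity is not restrictive because $\Xi_G^{1/k}|_H$ lies in every $L^q(H)$ regardless of $k$, so a fibrewise argument against a single $L^{2+\eta}(H)$-dominating function works; this is the reason the proof reduces cleanly to the integrability properties of $\Xi_G|_H$ on $H$.
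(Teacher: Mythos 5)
Your plan breaks down at its foundation, in two places. First, the hypothesis is misread: ``$(G,K)$-tempered in $L^1$'' in the sense of Margulis \cite{Ma} (and as used in the paper) means that the $K$-invariant matrix coefficients of \emph{every} unitary representation of $G$ without invariant unit vectors restrict to $L^1$ functions on $H$, dominated by a fixed $L^1(H)$ function; it is not the condition $\Xi_G|_H\in L^1(H)$, which only controls representations weakly contained in $\lambda_G$. Second, and decisively, the integrability deduction on which everything rests is false: since $0<\Xi_G\le 1$, raising to the power $1/k$ \emph{increases} the function, so $\Xi_G^{1/k}|_H\in L^q(H)$ is equivalent to $\Xi_G|_H\in L^{q/k}(H)$, and for $k\ge 2$ and $q=2+\eta$ this is an exponent at or below $1$ that does not follow from $\Xi_G|_H\in L^1(H)$ (for a function bounded by $1$ on the noncompact group $H$, membership in $L^s$ with $s<1$ is a \emph{stronger} decay condition, not a weaker one). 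This is exactly the uniformity problem you defer to your final paragraph and then dismiss by invoking the same false claim. It cannot be repaired along these lines; moreover Theorem \ref{Lp representations} supplies no uniform $p^+$ for rank-one factors without a spectral gap, so even the fibrewise bound $\Xi_G^{1/k}$ with a single $k$ over the direct integral is unavailable.

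The paper's proof is genuinely different and makes no use of $\Xi_G$ or of the $L^{p^+}$ classification. The real content of the theorem is the upgrade from $K$-\emph{invariant} to $K$-\emph{finite} matrix coefficients, via an extension of Cowling's argument \cite[Lem.~2.2.6]{Co}: expanding $K$-finite vectors through trigonometric polynomials on $K$ and using equivalence of norms on finite-dimensional spaces, one bounds $\int_H\abs{\inn{\pi(h)u,v}}^{2p}\,dh$ by a constant times $\int_H\bigl(\int_K\int_K\abs{\inn{\pi(khk')u,v}}^{2}\,dk\,dk'\bigr)^{p}dh$, and the inner double integral is a bi-$K$-invariant matrix coefficient of $\pi\otimes\pi^c$, which has no invariant unit vectors by Howe--Moore. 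Applying the hypothesis to this \emph{auxiliary} representation with $p=1+\eta$ puts the $K$-finite coefficients of $\pi|_H$ in $L^{2+2\eta}(H)$, and the Cowling--Haagerup--Howe criterion \cite{chh} then gives weak containment in $\infty\cdot\lambda_H$. Note that the hypothesis must be invoked for $\pi\otimes\pi^c$ and not merely for $\pi$ --- a further indication that it is a statement about all representations, not about the single function $\Xi_G$.
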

\begin{proof}
$K$-finite vectors are dense in every continuous unitary representation $G$ on a separable Hilbert space. It suffices to show given a unitary representation $\pi$ of $G$ without invariant unit vectors, that the restriction of $K$-finite matrix coefficients of $\pi$ to $H$ are in $L^{2+\eta}(H)$ for every $\eta > 0$. We will use 
an extension of an argument of Cowling \cite[Lem. 2.2.6]{Co}, and prove the following more general fact.  

 Assume that for every unitary representation $\pi$ of $G$ without invariant unit vectors, the restriction of the $K$-invariant matrix coefficients of $\pi$ to $H$ are in $L^p(H)$. We claim that then the restriction of all $K$-finite matrix coefficients of every such unitary representations $\pi$ to $H$ is in $L^{2p}(H)$.  
 
Given this claim, the condition that $H$ is $(G,K)$-tempered in $L^1$ (and even the weaker condition that
it is $(G,K)$-tempered in $L^{1+\eta}$ for every $\eta>0$) implies that the $K$-finite matrix coefficients of $G$ restricted to $H$ are 
in $L^{2+\eta}(H)$ for every $\eta>0$, namely almost square integrable. By Theorem \ref{tensor} $\pi$ restricted to $H$ is weakly contained in $\infty \cdot \lambda_H$ and $H$ is a tempered subgroup of $G$. 

To prove the claim let $u$ and $v$ in $\mathcal{H}_\pi$ be $K$-finite, and then there exist trigonometric polynomials $r_1,\dots ,r_N$ on $K$, such that for $w=u,v$ and all $k\in K$ 
$$ \pi(k)w=\sum_{j=1}^N \bar{r}_j(k)w_j.$$
The linear span (under left translation) of the functions $\bar{r}_j(k^\prime) \cdot r_i(k)$, $1\le i,j\le N$  in $C(K_H\times K_H)$ and $C(K\times K)$ is finite dimensional, and hence the restriction of the $L^2$-norm, the $L^\infty$-norm and the $L^p$ norm to the subspace are all equivalent, namely 
$ \norm{\cdot}_{L^{2p}(K_H\times K_H)}\le C_{2p,\infty} \norm{\cdot}_{L^\infty(K_H\times K_H)}$, 
and 
$ \norm{\cdot}_{L^{\infty}(K\times K)}\le C_{\infty,2} \norm{\cdot}_{L^2(K\times K)}$. 

Therefore 
\begin{align*}
&\norm{\inn{\pi(\cdot)u,v}}_{L^{2p}(H)}^{2p}\\
&=
\int_H \int_{K_H}\int_{K_H} \abs{\inn{\pi(khk^\prime)u,v}}^{2p} dk dk^\prime dh \\
&=\int_H \int_{K_H}\int_{K_H} \abs{\sum_{j,j^\prime=1}^N r_{j^\prime}(k^\prime) r_j(k)\inn{\pi(h)u_j,v_{j^\prime}}}^{2p} dk dk^\prime dh \\
&\le C_{2p,\infty}^{2p}\int_H \max_{(k,k^\prime)\in K_H\times K_H} \abs{\sum_{j,j^\prime=1}^N r_{j^\prime}(k^\prime) r_j(k)\inn{\pi(h)u_j,v_{j^\prime}}}^{2p} dh \\
&\le C_{2p,\infty}^{2p}\int_H \max_{(k,k^\prime)\in K\times K} \abs{\sum_{j,j^\prime=1}^N r_{j^\prime}(k^\prime) r_j(k)\inn{\pi(h)u_j,v_{j^\prime}}}^{2p} dh \\
&\le C_{\infty, 2}^{2p}C_{2p,\infty}^{2p} \int_H \abs{\int_{K}\int_{K} \abs{\sum_{j,j^\prime=1}^N r_{j^\prime}(k^\prime) r_j(k)\inn{\pi(h)u_j,v_{j^\prime}}}^{2}  dk dk^\prime}^p dh\\
&=  C_{\infty, 2}^{2p}C_{2p,\infty}^{2p} \int_H \abs{\int_{K}\int_{K} \abs{\inn{\pi(khk^\prime)u,v}}^{2}  dk dk^\prime}^p dh
\\
&=  C_{\infty, 2}^{2p}C_{2p,\infty}^{2p} \int_H \abs{\int_{K}\int_{K}\inn{(\pi\otimes
    \pi^c)(khk^\prime)(u\otimes u^c,v\otimes v^c}  dk dk^\prime}^p dh\\
&=  C_{\infty, 2}^{2p}C_{2p,\infty}^{2p} \int_H \abs{\inn{\pi\otimes\pi^c(h)U,V} }^p dh
\end{align*}
with $\pi^c$ being the  representation contragredient to $\pi$, namely $\inn{\pi^c u^c,v^c}=\overline{\inn{\pi(g)u,v} }$, and 
$$U=\int_K\pi(k^\prime)(u\otimes u^c) dk^\prime, \text{ , } V=\int_K\pi(k^{-1})(v\otimes v^c) dk\,.$$
Clearly $U$ and $V$ are bi-$K$-invariant vectors, and the matrix coefficient they determine is an $L^p$-function on $H$ by our assumption, provided only that the representation $\pi\otimes \pi^c$ does not admit $G$-invariant unit vectors. But since both 
$\pi$ and $\pi^c$ do not admit $G$-invariant unit vectors, by the Howe-Moore theorem the matrix coefficients of $\pi\otimes \pi^c$ vanish at infinity on $G$. This completes the proof of Theorem \ref{thm:L1L2}. 
\end{proof}

Let us now note some further examples of tempered subgroups $H\subset \SL_n(\R)$ based on the foregoing discussion. 
\begin{enumerate}
\item According to \cite[Ex. b]{Ma},  the irreducible representation of $\SL_2(\R)$ into $\SL_n(\R)$, $n \ge 4$ yields a subgroup which is $(G,K)$-tempered in $L^1$, and hence a tempered subgroup. 
As noted above, the irreducible representation in the case  $n=3$ yields a tempered subgroup by Kazhdan's argument, which is not $(G,K)$-tempered in $L^1$. 
\item According to \cite[Ex. c]{Ma} given any simple group $H$, there exists $k=k(H)$ such that for any irreducible representation of $\tau : H\to \SL_n(R)$, $\tau(H)$ is $(G,K)$-tempered in $L^1$ and hence a tempered subgroup of $\SL_n(R)$ provided $n \ge k$.  
\item Even more generally, according to \cite[Ex. c]{Ma}, for any (possibly reducible) representation $\tau : H \to \SL_n(\R)$, if the dimension $n'$ of the sum of the irreducible non-trivial subrepresentations of $\tau$  satisfies $n'\ge k'$ (with $k'$ a suitable constant), then again $\tau(H)$ is $(G,K)$-tempered in $L^1$ and hence tempered. 
\item If $L$ is any simple subgroup of dimension $n$, then $Ad(L)\subset \SL_n(\R)$ is a tempered subgroup of $\SL_n(R)$. Indeed, according to \cite[Ex. \S 5.1]{oh2}, the restriction of the universal pointwise bound 
$\xi_{\Upsilon}$ for a suitable maximal strongly orthogonal system $\Upsilon$  to the subgroup $Ad (L)$ is just $\delta_L$. 
Since $\delta_L$ is in $L^{2+\eta}(A_L^+)$ for every $\eta>0$, it follows that  $Ad(L)$ is a tempered subgroup of $\SL_n(\R)$. 
\end{enumerate}

\subsection{Bounds towards the Selberg eigenvalue conjecture.} 
%

For simple real groups of real rank one, bounds towards the Selberg eigenvalue conjecture for certain pairs $(G, \Gamma)$ have been developed, and in many cases they are optimal or close to optimal. It is possible to  exploit such bounds to establish best possible (or close to best possible) exponents of Diophantine approximation in number of very interesting and natural examples. We will consider this topic in detail in \cite{GGN4}, and here we will just briefly mention some of the possibilities.

First, if the representation of $G$ on $L^2_0(\Gamma \setminus G)$ is tempered, then so is its restriction to any closed subgroup $H\subset G$.  The main interest in this fact lies in the cases where the subgroup $H$ is non-amenable, and thus the case of $G=\SL_2(\R)$ is excluded here. We list some possibilities 
\begin{enumerate}
\item $G=\SL_2(\C)$, $H=\SL_2(\R)$, $\Gamma$ is an arithmetic lattice as well as some of its low level congruence groups, for example $\Gamma=\SL_2(\Z[i])$. 
\item Let $G$ be a simple Kazhdan group of real rank one, so that $G=Sp(n,1)$ or the exceptional group $F_4^{-20}$.  There is an embedding of $H=Sp(2,1)\subset Sp(n,1)=G$ (whenever $n \ge 3$)  which gives a tempered subgroup (see \cite[Sec. 4.8]{oh2}), and thus so is also every closed subgroup  of $H$. 
Note that the temperedness here applies to all the representations $\pi^0_{\Gamma\setminus G}$ for all  lattices $\Gamma \subset Sp(n,1)$.

\item The representation of $G=\SL_2(\R)\times \SL_2(\Q_p)$ on $L^2_0(\Gamma\setminus G)$ where $\Gamma=\SL_2(\Z[\frac1p])$ is tempered, provided $p$ is such that the congruence subgroup $\Gamma(p)\subset \SL_2(\Z)$ satisfies the Selberg eigenvalue conjecture. This is indeed the case for $p \le P$ for some $P$. Thus again every subgroup of $G$ is tempered in this representation. 
\end{enumerate}

Second, spectral estimate weaker than temperedness have been established for arithmetic lattices in $\SO(n,1)$ or $\SU(n,1)$. Such eignenvalue bounds are sufficient to imply that the restriction of $\pi^0_{\Gamma\setminus G}$ to suitable $\SO(k,1)$ or $\SU(k,1)$ is tempered, or close to tempered. They 
allow the derivation of Diophantine exponents which are optimal or close to optimal in the case of the action of arithmetic lattice on rational ellipsoids, for example. For more details we refer to \cite{GGN4}.

\section{The examples, revisited}\label{sec:proofs}\label{sec:ex}
We now turn to complete the proofs of the corollaries stated in Section \ref{sec:examples}. In every
case, we will first identify the algebraic group $G$, its algebraic subgroup $H$ (and thus the variety
$G/H$), the distance function on $H$, and the lattice subgroup $\Gamma$. We will then show that the
representation of $H$ in $L^2(\Gamma\setminus G)$ is tempered, identify the rate of volume growth  on $H$
with respect to the distance function, and then use Theorem
\ref{thm:optimal}  to conclude that the rate of Diophantine approximation is the one stated in the corollary. 

\subsection{Proof Corollary \ref{ex:1}}{\it Affine action on the real plane.} Here $G=\SL_2(\R)\rsemi\R^2$, acting affinely on the plane $\R^2$, $H=\SL_2(\R)$ is the stability group of the zero vector, so that $G/H=\R^2$. 
The lattice is $\Gamma=\SL_2(\Z)\rsemi \Z^2$, namely the lattice of integral points. We take any norm on
$\R^2$, and  any norm on $H$ induced from a matrix norm on $\Matn_3(\R)$. It  is  well-known that the volume growth on $H$ is  given by 
$\vol(\set{h \in H\,;\, \norm{h} < T})\sim C_{\norm{\cdot}} T^2$ as $T\to\infty$.  As to the spectral estimate, the $G$-equivariant  factor map $\Gamma\setminus G\to \SL_2(\Z)\setminus \SL_2(\R)$ allows us to lift $L^2(\SL_2(\Z)\setminus \SL_2(\R))$ and embed it as an 
$G$-invariant subspace $\cH$ of $L^2(\Gamma\setminus G)$. It is well-known that the representation of $H=\SL_2(\R)$ on $\mathcal{H}\cong L^2(\SL_2(\Z)\setminus \SL_2(\R))$ is tempered. We claim that the representation of $H$ on $\mathcal{H}^\perp\subset L^2(\Gamma\setminus G)$ is also tempered. 
Indeed the unitary representation of $G$ on $\cH^\perp$ is such that the subgroup $\R^2$ does not have an
invariant unit vector. As a result, Kazhdan's argument as formulated in Theorem \ref{thm:sl2} implies
that the representation of $H=\SL_2(\R)$ on $\cH^\perp$ is tempered. Thus here $a=2$, $d=\dim G/H=2$, and
we conclude using Theorem  \ref{thm:optimal} that  the Diophantine exponent is $\kappa=d/a=1$. 
 
 \subsection{Proof of Corollary \ref{ex:2}}{\it Affine action on the complex plane.} Here
 $G=\SL_2(\C)\rsemi \C^2$, $G$ acts affinely  on $G/H=\C^2$, with stability group
 $H=\hbox{Stab}_G(0)=\SL_2(\C)$, and we consider the lattice $\Gamma=\SL_2(\Z[i])\rsemi \Z[i]^2$. We take
 any norm on $\C^2$, and any (vector space) norm on $\Matn_2(\C)$ restricted to $H$, and in that case the
 volume growth is  $\vol(\set{h \in H\,;\, \norm{h} < T})\sim C_{\norm{\cdot}} T^4$ as $T\to\infty$. Indeed, in general the volume growth of balls defined with respect to any (vector space) norm on $\SL_n(F)\subset \Matn_n(F)$ is $T^{n^2-n}$ when $F=\R$ according to \cite{DRS}, and $T^{2(n^2-n)}$ when $F=\C$. Again, the representation of $H=\SL_2(\C)$ on $\cH=L^2(\SL_2(\Z[i])\setminus \SL_2(\C))$ is known to be tempered (see below), and the representation of $G$ on the orthogonal complement $\cH^\perp$ is such that $\C^2$ does not have invariant unit vectors.  Thus Theorem \ref{thm:sl2} implies that the representation of $H=\SL_2(\C)$ on $\cH^\perp$ is tempered. Clearly here $d=4$ and since $a=4$ it follows  that $\kappa_\Gamma(\C^2)=1$ according to Theorem \ref{thm:optimal}.  

Consider now the lattice subgroup $\Gamma=\SL_2(\cI)\rsemi \cI^2$, where $\cI$ is the ring of Eisenstein integers contained in  
$\Q[\sqrt{-3}])$, or the rings of integers of the imaginary quadratic fields  $\Q[\sqrt{-2}])$ and
$\Q[\sqrt{-7}])$.  it has been established directly that the Laplace operator acting on the manifolds
$\SL_2(\cI)\setminus \SL_2(\C)/\SU_2(\C)$ has no exceptional eigenvalues, see \cite[Prop. 6.2,
p. 347]{EGM}.  It follows that the spherical spectrum of the representation of $\SL_2(\C)$ in
$L^2(\SL_2(\cI)\setminus \SL_2(\C))$ is tempered. Now, if we take a  norm on $H$
biinvariant under $K=\SU_2(\C)$, then the norm balls would be bi-$K$-invariant sets, and the spectral
bound on the rate of decay of the associated averaging operators in the representation space is
determined by the spherical spectrum. Hence the operator norms decay in this case is indeed the best
possible, given by the reciprocal of the square root of the volume.
Hence, $\kappa_\Gamma(\C^2)=1$ as above.
Finally, this estimate  clearly holds for any norm on $H$ if it holds for one choice. 

We note that the  case of a general Bianchi group $\SL_2(\cI)$ for $\cI$ the ring of integer of an imaginary quadratic field $\Q[\sqrt{-D}])$ ($D\ge 2$ a square-free positive integer), Selberg conjecture is still open even for the lattice of integral points (i.e. level one congruence group), and so is the problem of best possible Diophantine approximation.

\subsection{Proof of Corollary \ref{ex:3}} {\it Ternary forms.} 
The variety $\mathcal{Q}_{\sigma,d}(\R)$
can thus be identified with the set
of $3\times 3$ symmetric matrices with
determinant $d$ and signature $\sigma$.
The group $G=\SL_3(\R)$ acts on this variety  via 
$(g,X)\mapsto {}^tgXg$, and the stability group is isomorphic to $H=\SO(2,1)$. 
Thus $\mathcal{Q}_{\sigma,d}(\R)$ can be identified with the homogeneous space $G/H$. 
We note that $\SO(2,1)\cong \PSL_2(\R)$, but the representation of $\SO(2,1)$ in $\SL_3(\R)$ is
irreducible and Kazhdan's argument does not apply. Rather, we use the fact that given any irreducible
representation $\sigma_n : \SL_2(\R)\to\SL_n(\R)$ the image $H_n=\sigma_n(\SL_2(\R))$ is a tempered
subgroup of $\SL_n(\R)$ provided $n \ge 3$, as follows from Theorem \ref{thm:L1L2}. Given any norm on
$\Matn_n(\R)$, the volume growth of norm balls in $H_n=\sigma_n(\SL_2(\R))$ is asymptotic to
$C_{\norm{\cdot},n}T^{2/(n-1)}$ (see, for instance, \cite{DRS} or \cite[Sec.~7]{GW}). In the case of ternary forms $n=3$ so that  $a=1$, and $d=\dim G/H=8-3=5$, and we conclude that the exponent of Diophantine approximation is $\kappa_\Gamma(\mathcal{Q}_{\sigma,d}(\R))=5$ for {\it any} lattice subgroup $\Gamma\subset \SL_3(\R)$.

\subsection{Proof of Corollary \ref{ex:4}} {\it Constant determinant variety.} 
Here $G=\SL_3(F)\times \SL_3(F)$, $H=\Delta( \SL_3(F))$ the diagonally embedded copy of $\SL_3(F)$, and
$G/H\cong \SL_3(F)$. We take distances given by a norm on $\Matn_3(F)$ restricted to $H$ and $G/H$. As is
well known (see \cite{li} and \cite{oh2}) here $n(\SL_3(F))=4$, and so every irreducible infinite
dimensional representation of $\SL_3(F)$ is  $L^{4+\eta}$ for every $\eta>0$. Hence the restriction of
the tensor power of two such representations to the diagonal subgroup $\Delta(\SL_3(F))$ is in
$L^{2+\eta}$ for every $\eta>0$, as shown in Theorem \ref{tensor power}. Thus, while $H$ is not a tempered subgroup of $G$, for every {\it irreducible} lattice $\Gamma\subset \SL_3(F)\times \SL_3(F)$, the representation of $H$ in $L^2_0(\Gamma\setminus G)$ is tempered. The rate of volume growth of norm balls in $\SL_3(F)$ is asymptotic to $T^6$ when $F=\R$ or $F=\Q_p$ and $T^{12}$ when $F=\C$. The (relevant) dimension of $G/H$ is $d=8$ when $F=\R,\Q_p$, and $d=16$ when $F=\C$. Thus for any irreducible lattice, $\kappa_\Gamma(G/H) =4/3$. 

\subsection{Proof of Corollary \ref{ex:5}} {\it Complex structures.}
Here $G=\SL_4(\R)$, $H=\SL_2(\C)$, and $G/H$ is the variety of complex structures on $\R^4$. We claim that for any lattice $\Gamma$ of $\SL_4(\R)$, the restriction of the representation of $\SL_2(\C)$ on $L^2_0(\Gamma\setminus \SL_4(\R))$ is tempered. This follows from a straightforward explicit calculation using the natural embedding of $SL_2(\C)$ in $SL_4(\R)$ and applying  the universal pointwise estimate (\ref{HT-Psi}) given by Howe and Tan (see \S $4.6$). The real dimension of the variety $G/H$ is $d=9$, and the volume growth in $\SL_2(\C)$ in the defining representation is $a=4$.  We conclude that $\kappa_\Gamma(G/H)=9/4$. 

\subsection{Proof of Corollary \ref{ex:6}}{\it Simultaneous Diophantine approximation.}
Here $G=\SL_{2n}(F)$, $H=\SL_2(F)^{n} \subset \SL_{2n}(F)$, and the variety $\mathcal{D}_n(F)\simeq\SL_{2n}(F)/\SL_2(F)^n$ can be interpreted as the variety of "unimodular direct sum decompositions" of $F^{2n}$, to $n$ two-dimensional subspaces with suitable volume forms. We take a norm $\Matn_{2n}(F)$ and restrict it to $H$, and an embedding $G/H\subset F^k$ with distance given as a restriction of a norm on $F^k$. $H$ is a direct product group, and each $\SL_2(F)$-component of $H$ is tempered in $G$ by Kazhdan's argument, as stated in Theorem \ref{thm:sl2}.  Hence the same is true for the direct product group, namely $H$. The dimension of $G/H$ is clearly given by $(2n)^2-1-3\cdot n$ when $F=\R, \Q_p$, and when $F=\C$ the real dimension is doubled.  The exponent of volume growth of $H=\SL_2(F)^n$ is $n$ times the volume growth of $\SL_2(F)$ in the defining representation, namely $2n$ for $F=\R,\Q_p$ and $2n$ for $F=\C$.  We note however that the volume growth of norm balls in this case is in fact asymptotic to a multiple of $ T^{2n}$ (or $ T^{4n}$), so the exponent of volume growth is as stated. 
Thus for $F=\R,\Q_p,\C$, we have $\kappa_\Gamma(\mathcal{D}_n(F)) = ((2n)^2-1-3n)/2n$.

\subsection{Proof of Corollary \ref{ex:7}} {\it  Irreducible representation of $\SL_2$.}  
As already noted in \S 4.7, it was shown in \cite[Ex. b]{Ma} that for every irreducible representation $\sigma_n : \SL_2(\R)\to \SL_n(\R)$, $n \ge 3$, the subgroup $\sigma_n(\SL_2(\R))=H$ is a subgroup which is $(G,K)$-tempered in $L^1$. By Theorem 
\ref{thm:L1L2}, it follows that it is therefore tempered. We remark that the latter fact can also easily be proved directly using the Howe-Tan universal pointwise bound stated in (\ref{HT-Psi}) and (\ref{HT-estimate}). 
Taking any norm on $\Matn_n(\R)$, the growth of norm balls of radius $T$ restricted to $\sigma_n(H)$ is 
asymptotic to $T^{2/(n-1)}$ (see \cite{DRS} or \cite[Sec.~7]{GW}), and the dimension of the variety in question is of coarse $d=n^2-4$. The Diophantine exponent is given by $\kappa=d/a=\frac12(n^2-4)(n-1)$, by Theorem \ref{thm:optimal}. 

\subsection{Proof of Corollary \ref{ex:8}} {\it  Reducible  representations of $\SL_2$. }  
Let $\sigma : \SL_2(\R) \to \SL_n(\R)=G$ be a non-trivial representation, where $n \ge 3$, and denote $H=\sigma(\SL_2(\R))$. If $\sigma$ is irreducible, $H$ is a tempered subgroup of $G$ by Corollary \ref{ex:7}. 
Otherwise, $\sigma$ is reducible, and there is at least one $H$-invariant subspace $V\subset \R^n$, on which the action of $H$ is by an irreducible  representation of dimension $k$, where $2 \le k < n$. 
It follows that $H$ is contained in a subgroup $L$ of $G$, which is isomorphic to $\SL_2(\R)\rsemi_{\sigma_k} \R^k $. By Theorem \ref{thm:sl2 restriction}, $L$ is tempered subgroup of $G$ and hence so is $H$.
Choose a norm $\norm{\cdot}$ on $\Matn_n(\R)$, and let $a(\sigma, \norm{\cdot})$ denotes the rate of growth of norm balls in $H$ with respect to the norm restricted to $H$. Then for any lattice $\Gamma\subset \SL_n(\R)$, the exponent of Diophantine approximation on $G/H$ is given by $(n^2-4)/a(\sigma,\norm{\cdot})$, by Theorem \ref{thm:optimal}.  

\subsection{Proof of Corollary \ref{ex:9}}{\it Restriction of scalars for totally real fields.} 
$\SL_n(\R)$, $n \ge 3$ has the property that every unitary representation without invariant unit vectors is in $L^{2(n-1)+\eta}$. 
By Theorem \ref{tensor power}, the diagonal embedding $H=\Delta(\SL_n(\R))$ of $\SL_n(\R)$ in $G=\SL_n(\R)^{n-1}$ has the property that the restriction of the representation of $G$ on $L^2_0(G/\Gamma)$ to $H$ is tempered, for any irreducible lattice $\Gamma\subset G$.  The lattice $\Gamma$ associated with 
restriction of scalar from a totally real field of degree $n-1$ over $\Q$ is well known to be irreducible (see e.g. \cite{Marg}). Restricting the trace norm on $\SL_n(\R)^{n-1}$ to the diagonal subgroup, the volume growth of norm balls of radius $T$ is asymptotic to $T^{n^2-n}$. The dimension of the variety if of course $(n-2)(n^2-1)$, and thus by Theorem \ref{thm:optimal} we have $\kappa=(n+1)(n-2)/n$. 

\subsection{Covering homogeneous spaces.}
Finally, note the obvious fact that if a unitary representation $\pi$ of a unimodular lcsc group $H$ is weakly contained in a multiple of the regular representation of $H$, then for every closed unimodular subgroup $L\subset H$, the restriction of $\pi$ to $L$ is weakly contained in the regular representation of $L$. Thus if 
  $H$ is tempered in $G$, so is any subgroup $L\subset H$, and similarly, if the triple $(G,H,\pi)$ is tempered, then so is the triple $(G,L,\pi)$. When $L$ is a non-compact semisimple subgroup of the algebraic group $G$, temperedness implies that $\pi$ restricted to $L$ obeys best possible estimate of operator norms. Thus 
if $\Gamma$ has best possible rate on $G/H$, it has best possible rate also on its covers $G/L$, by Theorem \ref{thm:optimal}. However, in the computation of the exponent $\kappa=d/a$, the dimension $d$ of the homogeneous variety $G/L$ is larger than $\dim G/H$, and the rate of growth of norm balls  in $L$ is generally different than the rate of growth of norm balls in $H$.

\ignore{

\section{Further topics}
In the present paper we have emphasized the development of methods  establishing best possible Diophantine approximation rates for dense lattice orbits on homogeneous spaces, and we have focused mainly on examples arising when either the group $G$ or the stability group $H$ (or both) are  semisimple algebraic groups defined over fields of characteristic zero. Establishing upper bounds on the approximation exponents is possible is many other interesting problems, even if the bounds are not optimal. We mention the following problems  
\begin{enumerate}
\item An amenable stability group $H$, for example $\SL_2(\Z)$ acting linearly on the plane $\R^2=\SL_2(\R)/N$, $N$ unipotent. 
\item A non-unimodular stability group $H$, For example $\SL_n(\Z)$ acting on flag varieties $\SL_n(\R)/Q$, $Q$ a parabolic subgroup.
\item Homogeneous spaces of general Iwasawa groups $G$, for example the groups of affine transformations of vectors spaces and Heisenberg groups. 
\item Uniform estimates  for all congruence subgroups of an arithmetic lattice, the action of congruence subgroups on a rational hyperboloid of a given dimension.   
\item Algebraic groups defined over fields of positive characteristic. 
\item Using the power of log to improve the upper estimate. 
\item An analog of Khinchine's theorem for approximation on tempered homogeneous varieties. 
\end{enumerate}
These problems will be considered, in due course, in \cite{GGN4}. 

}

\end{document}